\theoremstyle{plain}
\newtheorem{theorem}{Theorem}[section]
\newtheorem{cor}[theorem]{Corollary}
\newtheorem{prop}[theorem]{Proposition}
\newtheorem{lemma}[theorem]{Lemma}
\theoremstyle{definition}
\newtheorem{rem}[theorem]{Remark}
\newtheorem{remark}[theorem]{Remark}
\newtheorem{definition}[theorem]{Definition}
\newcommand\myeq{\stackrel{\mathclap{\tiny \normalfont\mbox{1}}}{=}}
\newcommand{\R}{\mathbb{R}}
\newcommand{\N}{\mathbb{N}}
\newcommand{\eps}{\varepsilon}
\DeclareMathOperator{\supp}{supp}
\DeclareMathOperator{\sgn}{sgn}
\DeclareMathOperator{\spann}{span}
\DeclareMathOperator{\diam}{diam}
\DeclareMathOperator{\co}{co}
\DeclareMathOperator{\Id}{Id}
\DeclareMathOperator{\dc}{dc}
\DeclareMathOperator{\wdc}{\textit{w}*dc}
\DeclareMathOperator{\wdec}{\textit{w}*{\delta}c}
\DeclareMathOperator{\sdc}{sdc}
\DeclareMathOperator{\dec}{{\delta}c}
\DeclareMathOperator{\dent}{dent}
\DeclareMathOperator{\ext}{ext}
\DeclareFontFamily{U}{mathb}{}
\DeclareFontShape{U}{mathb}{m}{n}{<-5.5> mathb5 <5.5-6.5> mathb6 
<6.5-7.5> mathb7 <7.5-8.5> mathb8 <8.5-9.5> mathb9 <9.5-11> mathb10 
<11-> mathb12}{}
\DeclareSymbolFont{mathb}{U}{mathb}{m}{n}
\DeclareMathSymbol{\blackdiamond}{\mathbin}{mathb}{"0C}  % \mathbin for a binary operator
\newcommand{\Lip}{{\mathrm{Lip}}_0}
\title[The Daugavet and Delta-constants of points in Banach spaces]
{The Daugavet and Delta-constants of points in Banach spaces}
\author[G.~Choi]{Geunsu Choi}
\address[G.~Choi]{Department of Mathematics Education, Sunchon National University, 57922 Jeollanam-do, Republic of Korea}
\email{\texttt{gschoi@scnu.ac.kr}}
\author[M.~Jung]{Mingu Jung} 
\address[M.~Jung]{June E Huh Center for Mathematical Challenges, Korea Institute for Advanced Study, 02455 Seoul, Republic of Korea}
\email{\texttt{jmingoo@kias.re.kr}}
\keywords{Daugavet point, Delta-point, Daugavet property, Diameter two property}
\subjclass[2020]{Primary: 46B04, 46B20}
\date{\today}                                           
\begin{document}

\begin{abstract}
We introduce two new notions called the \textit{Daugavet constant} and \textit{$\Delta$-constant} of a point, which measure quantitatively how far the point is from being Daugavet point and $\Delta$-point and allow us to study Daugavet and $\Delta$-points in Banach spaces from a quantitative viewpoint. We show that these notions can be viewed as a localized version of certain global estimations of Daugavet and diametral local diameter two properties such as Daugavet indices of thickness. As an intriguing example, we present the existence of a Banach space $X$ in which all points on the unit sphere have positive Daugavet constants despite the Daugavet indices of thickness of $X$ being zero. 
Moreover, using the Daugavet and $\Delta$-constants of points in the unit sphere, we describe the existence of almost Daugavet and $\Delta$-points as well as the set of denting points of the unit ball. 
We also present exact values of the Daugavet and $\Delta$-constant on several classical Banach spaces, as well as Lipschitz-free spaces. In particular, it is shown that there is a Lipschitz-free space with a $\Delta$-point which is the furthest away from being a Daugavet point. Finally, we provide some related stability results concerning the Daugavet and $\Delta$-constant.
\end{abstract}

\maketitle
	
	\hypersetup{linkcolor=black}

\makeatletter \def\l@subsection{\@tocline{2}{0pt}{1pc}{5pc}{}} \def\l@subsection{\@tocline{2}{0pt}{3pc}{6pc}{}} \makeatother

\tableofcontents

\hypersetup{linkcolor=blue}

\section{Introduction}

Let $X$ be a Banach space with closed unit ball $B_X$, unit sphere $S_X$, and topological dual space $X^*$. 
We say that $X$ has the \textit{Daugavet property} if the equation
\begin{equation}
\| \Id + T \| = 1+\|T\| \tag{DE} \label{eq:DE}
\end{equation} 
holds for every rank-one bounded linear operator from $X$ to itself, where $\Id$ denotes the identity operator on $X$. Given $x \in S_X$ and $\eps >0$, let us denote by $\Delta_\eps (x) = \{ y \in B_X : \| y- x\| \geq 2 -\eps\}$. By the geometric characterization in \cite{KSSW}, $X$ has the Daugavet property if and only if for every $x \in S_X$ and $\eps >0$ we have $B_X \subseteq \overline{\co} \,\Delta_{\eps}(x)$.

The following related (weaker) property is introduced in \cite{IK}: a Banach space $X$ is \textit{with bad projections} if for every rank-one projection $P$, we have
\begin{equation}
\| \Id - P \| \geq 2. \tag{$\Delta$E} \label{eq:BP}
\end{equation}
Let us mention that this property was studied in \cite{AHNTT, BLR18} under the names of \textit{local diameter two property $+$} and \textit{diametral local diameter two property}. There is a similar characterization for this property \cite{IK}: a Banach space $X$ is with bad projections if and only if  for every $x \in S_X$ and $\eps >0$, we have $x \in \overline{\co} \,\Delta_{\eps}(x)$. 

There have been numerous efforts in the study of the Daugavet property and its related properties as it turned out that they could significantly affect the isomorphic structure of a Banach space. For instance, a Banach space with the Daugavet property contains an isomorphic copy of $\ell_1$ \cite{KSSW}, it does not have unconditional basis \cite{K96}. Moreover, if a Banach space is with bad projections and has an unconditional basis, then the unconditional suppression basis constant must be at least 2. 
Classical examples of Banach spaces having the Daugavet property include $C(K), L_1 (\mu)$ and $L_\infty (\mu)$ provided that $K$ is perfect and $\mu$ is non-atomic \cite{D63,Loz, W96}, the space of Lipschitz functions $\Lip (M)$ over a metrically convex space $M$ \cite{ikw} and the Banach algebra of bounded holomorphic functions $H^\infty$ \cite{J23, W97, Wo92}. 
For background on the Daugavet property and its related properties including being with bad projections, we refer to \cite{KW04, MR2022, W01} and references therein. 

The pointwise version of the Daugavet property and the diametral local diameter two property were first introduced in \cite{AHLP} and studied in \cite{ALMT, DJRZ, JRZ, V2023, V_lipschitzfunction}: given a Banach space $X$ and a point $x \in S_X$, $x$ is said to be 
\begin{itemize}
\itemsep0.3em
\item a \textit{Daugavet point} if for every $\eps>0$, $\delta>0$ and $x^* \in S_{X^*}$, there exists $y \in S(x^*,\delta)$ such that $\|y-x\| > 2 - \eps$.
\item a \textit{$\Delta$-point} if for every $\eps>0$, $\delta>0$ and $x^* \in S_{X^*}$ with $x \in S(x^*,\delta)$, there exists $y \in S(x^*,\delta)$ such that $\|y-x\| > 2 - \eps$,
\end{itemize} 
where $S(x^*,\delta)$ is a \textit{slice} of $B_X$, i.e., $S(x^*,\delta)=\{y \in B_X : \text{Re}\,x^* (y) > 1-\delta\}$. For the spaces $L_1 (\mu)$ and $L_1$-predual spaces, the notions of Daugavet point and $\Delta$-point coincide \cite{AHLP, MR2022}. However, in general, there exists a $\Delta$-point which is not a Daugavet point (see, for instance, \cite[Example 4.7]{AHLP} and \cite[Example 4.4]{JRZ}).

In this article, we introduce the concepts of Daugavet constant and $\Delta$-constant (see Definition \ref{def:dc_and_dec} for definitions) of points in order to analyze how close or how far the points in Banach spaces are from becoming Daugavet points or $\Delta$-points.
The aim of this work is to initiate a systematic investigation of Daugavet and $\Delta$-constants as well as to extend several known properties and results on Daugavet and $\Delta$-points from a quantitative point of view.

The Daugavet index of thickness, which quantifies how far a Banach space is from having the Daugavet property, was first introduced in \cite{RZ2018} and its variations are deeply studied in \cite{HLLNR}.
%, and this allowed them to solve an open question whether every slice always have diameter two or not when they are all of radius one. 
Motivated by the question of whether or not a super-reflexive space admits $\Delta$-points, the authors of \cite{ALMP} investigated a Banach space that admits \textit{almost Daugavet (or, $\Delta$-) points}. 
These are weaker than assuming the existence of Daugavet and $\Delta$-points in Banach spaces. 
For instance, $c_0$ does not have any $\Delta$-points, while it admits almost $\Delta$-points. It turns out that Daugavet indices of thickness and the existence of almost Dauagvet or $\Delta$-points are closely related to the quantitative approach of Daugavet and $\Delta$-points, so these will be one of our main aim of the study.

Throughout the paper, $\ext(B_X)$ and $\dent(B_X)$ denote the set of all extreme points and all denting points, respectively, of $B_X$, where a point $x \in S_X$ is said to be a \emph{denting point} if there is a slice of $B_X$ containing $x$ with an arbitrary small diameter. Recall that a Banach space $X$ has the \emph{Radon-Nikod\'ym property} if every nonempty bounded subset $B \subseteq X$ is \textit{dentable}, that is, $B$ contains a slice with an arbitrary small diameter. It is immediate that a Banach space with Daugavet property (or, diametral local diameter two property) fails the Radon-Nikod\'ym property as it has no denting points. In contrast to this, a surprising example of a Banach space with the Radon-Nikod\'ym property and a Daugavet point was constructed in \cite{V2023}. Recently, it turned out that every infinite-dimensional Banach space can be renormed with a $\Delta$-point \cite{AAL+}, and a Daugavet point provided that it has an unconditional weakly null Schauder basis \cite{HLPV} (hence, $\ell_2$ can be renormed with a Daugavet point). Let us also mention that a Banach space with a subsymmetric basis cannot have $\Delta$-points \cite{ALMT}.

The outline of the paper is as follows. In Section \ref{sec:general}, we investigate the general properties of Daugavet and $\Delta$-constants. We give the definitions of Daugavet and $\Delta$-constant and obtain some general observations on them in Section \ref{subsec:def}. Moreover, we study their relation with some operator-norm inequalities which extends the well-known relation \cite{AHLP} between Daugavet points (resp., $\Delta$-points) and the equality \eqref{eq:DE} (resp., \eqref{eq:BP}). 
Section \ref{subsec:convex} is devoted to study the relation between our new notions and extremal points of the unit ball. Namely, we show that the set of points with vanishing $\Delta$-constant coincides with the set of denting points of $B_X$. 
Furthermore, we provide precise values of the Daugavet constant of a locally uniformly rotund point of the unit ball and $\Delta$-constant of a point lying in the unit ball of a Hilbert space. 
In Section \ref{subsec:global}, we link our two new notions to global estimations of Daugavet and diametral local diameter two properties introduced in \cite{HLLNR, RZ2018}. 
More precisely, we first observe that the characterization for $X$ to admit almost Daugavet points can be done using the supremum of Daugavet constants of $x$ over all points in $S_X$. Also, the infimum of Daugavet constants of $x$ over all points in $S_X$ is shown to be, by its nature, the same as the Daugavet index of thickness of $X$. Finally, we construct a Banach space whose infimum of $\Delta$-constants is zero while there are no points with vanishing Daugavet constants.

In Section \ref{sec:classical}, we present a number of estimations on Daugavet and $\Delta$-constants of a point in certain classical Banach space including $c_0$, $L_1$-spaces, uniform algebras and Lipschitz-free spaces. We present some optimal calculations of those values, while for others, we discuss possible bounds on the values that the Daugavet and $\Delta$-constant can have. It is observed as a consequence that behaviors of Daugavet constant and $\Delta$-constant are totally different in the case of $c_0$ spaces, while those two constants coincide in  certain $L_1$-spaces. Finally, through the ideas used to study Daugavet and $\Delta$-points in Lipschitz-free spaces, we are able to construct a Banach space with a $\Delta$-point whose Daugavet constant is zero.

In Section \ref{sec:stability}, we explore some related stability results. These results from a quantitative standpoint generalize several stability results that have already been discussed in many literatures. As an application, we estimate lower bounds for Daugavet and $\Delta$-constants of points in spaces of absolute sums such as $\ell_1$- and $\ell_\infty$-sums.

\section{General results on Daugavet and $\Delta$-constants}\label{sec:general}

\subsection{The Daugavet and $\Delta$-constant}\label{subsec:def}

In the following all Banach spaces are assumed to be over the \textbf{real} field. 

\begin{definition}\label{def:dc_and_dec}
Let $X$ be a Banach space and $x \in B_X$. 
\begin{itemize}
\itemsep0.3em
\item 
The \emph{Daugavet constant} of the point $x$ is 
$$
\dc(x) := \sup \Set{ \alpha \geq 0 | \begin{array}{l} \forall \eps,\delta>0, y^* \in S_{X^*},\,\exists \, y \in S(y^*,\delta) \\
\text{ such that } \|y-x\| > \alpha - \eps \end{array}}.
$$
\item 
The \emph{$\Delta$-constant} of the point $x$ is 
$$
\dec(x) := \sup \Set{ \alpha\geq 0 | \begin{array}{l} \forall \eps,\delta>0, y^* \in S_{X^*} \text{ with } x \in S(y^*,\delta), \\ 
\exists \, y \in S(y^*,\delta) \text{ so that } \|y-x\| > \alpha - \eps \end{array}}.
$$
\end{itemize} 
\end{definition}
Notice that the definition remains equivalent without the use of $\varepsilon$. However, we include it in the definition so that the supremums are indeed maximums. Also, it is clear by definition that $x \in S_X$ is a Daugavet point (resp. $\Delta$-point) if and only if $\dc(x) = 2$ (resp. $\dec(x) =2$). %The notions of Daugavet and $\Delta$-constants allow us to estimate how far or close a given point is from becoming Daugavet and $\Delta$-point. 

Observe from the Definition \ref{def:dc_and_dec} of Daugavet and $\Delta$-constant that  
\[
1-\|x\| \leq \dc(x) \leq \dec(x) \leq 1+\|x\|
 \]
and $\dc(x) = \dc (-x)$ and $\dec (x) = \dec(-x)$ for every $x \in B_X$. 
By definition, we have that 
\begin{equation}\label{eq:dc_ball}
\sup_{y\in S} \|x-y\| \geq \dc(x) \,  \text{ for any slice } S \text{ of } B_X. 
\end{equation} 
On the other hand, for any slice $S_0$ of $B_X$ and $\eps >0$, there exists $y_0 \in S_0$ so that $\|x-y_0\| \geq \sup_{y\in S_0} \|x-y\| - \eps.$
This, combined with \eqref{eq:dc_ball}, shows that 
\begin{equation*}\label{eq:dc_ball2}
\dc(x) = \inf_{\substack{S \subseteq B_X \text{ slice }} } \sup_{y \in S} \|x-y\|. 
\end{equation*}
Similarly, one can show that 
\begin{equation*}\label{eq:dc_ball3}
\dec(x) = \inf_{\substack{S \subseteq B_X \text{ slice } \\ x \in S} } \sup_{y \in S} \|x-y\|. 
\end{equation*}
In this regard, following the terminology in \cite[Section 5]{MPR2023}, we can say a point $x \in B_X$ is a Daugavet point (resp. $\Delta$-point) if and only if $\dc(x) = 1+\|x\|$ (resp. $\dec(x) = 1+\|x\|$).

\begin{rem}\label{rem:dc_Lipschitz} 
Note that $x \in B_X \mapsto \dc(x)$ is a $1$-Lipschitz map. In fact, let $x, y \in B_X$ be given. Given a slice $S$ and $\eps$, choose $u \in S$ so that $\| x - u \| > \dc(x) - \eps$. Then
\[
\| y -u\| \geq \|x - u \| - \| x - y\| > \dc (x) - \| x-y\| -\eps.  
\]
This implies that $\dc(y) \geq \dc(x) - \|x-y\|$. By changing the role of $x$ and $y$, we obtain $\dc(x)\geq \dc(y) - \|x-y\|$. It follows that $|\dc(x)-\dc(y)| \leq \|x-y\|$.
\end{rem} 

%Notice that a point $x \in S_X$ is a Daugavet point (resp. $\Delta$-point) if and only if $\dc(x)=2$ (resp. $\dec(x)=2$). As a consequence,  

%\subsection{On basic properties and general results}\label{subsec:general}

Let us observe the following as a direct application of the Hahn-Banach theorem, which may be considered as a quantitative counterpart of the geometric characterization of being a Daugavet and a $\Delta$-point.

\begin{prop}\label{prop:dc-characterization}
Let $X$ be a Banach space and $0 < \alpha \leq 2$, $x \in B_X$ be given.
\begin{enumerate}
\itemsep0.3em
\item[\textup{(a)}] $\dc(x) \geq \alpha$ if and only if $B_X \subseteq \overline{\co} \,\Delta_{2-\alpha+\eps}(x)$ for every $\eps >0$. 
\item[\textup{(b)}] $\dec(x) \geq \alpha$ if and only if $x \in \overline{\co} \,\Delta_{2-\alpha+\eps}(x)$ for every $\eps >0$. 
\end{enumerate}
\end{prop}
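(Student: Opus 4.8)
The plan is to prove both equivalences by the same two-step scheme, exploiting the reformulations $\dc(x) = \inf_{S}\sup_{y\in S}\|x-y\|$ and $\dec(x) = \inf_{S\ni x}\sup_{y\in S}\|x-y\|$ recorded just above the statement, together with the elementary rewriting $\Delta_{2-\alpha+\eps}(x) = \{y\in B_X : \|y-x\|\geq \alpha-\eps\}$. The point of this rewriting is that the geometric condition in (a) (resp. (b)) is exactly a statement about covering $B_X$ (resp. $x$) by the closed convex hull of the points that are at distance at least $\alpha-\eps$ from $x$, which is precisely the distance threshold appearing in the definitions of $\dc$ and $\dec$.

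For the implication ``geometric condition $\Rightarrow$ constant $\geq \alpha$'' in (a), I would fix $\eps',\delta>0$ and $y^*\in S_{X^*}$, set $\eps=\eps'/2$, and pick $z\in B_X$ with $\re y^*(z) > 1-\delta/2$. Using $z\in\overline{\co}\,\Delta_{2-\alpha+\eps}(x)$ I approximate $z$ by a convex combination $\sum_i\lambda_i w_i$ with $w_i\in\Delta_{2-\alpha+\eps}(x)$ and $\|z-\sum_i\lambda_i w_i\|<\delta/2$, so that $\sum_i\lambda_i\,\re y^*(w_i)>1-\delta$. A pigeonhole step then forces some $w_{i_0}$ to satisfy $\re y^*(w_{i_0})>1-\delta$, i.e.\ $w_{i_0}\in S(y^*,\delta)$, while $\|w_{i_0}-x\|\geq\alpha-\eps>\alpha-\eps'$, yielding $\dc(x)\geq\alpha$. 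The argument for (b) is identical except that one starts from the point $x$ itself (which lies in the relevant convex hull by hypothesis) and uses $\re y^*(x)>1-\delta$ in place of the choice of $z$, so that the approximating combination again produces a summand inside $S(y^*,\delta)$.

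For the converse ``constant $\geq\alpha\Rightarrow$ geometric condition'' I would argue by contraposition using Hahn--Banach separation. If $B_X\not\subseteq\overline{\co}\,\Delta_{2-\alpha+\eps_0}(x)$ for some $\eps_0>0$, choose $z_0\in B_X$ outside this closed convex set and separate it: there are $f\in S_{X^*}$ and $\beta\in\R$ with $\re f(z_0)>\beta> M:=\sup\{\re f(w): w\in B_X,\ \|w-x\|\geq\alpha-\eps_0\}$. Putting $\delta=1-\beta$ gives a slice $S(f,\delta)=\{y\in B_X:\re f(y)>\beta\}$, in which every $y$ must satisfy $\|y-x\|<\alpha-\eps_0$, since otherwise $\re f(y)\leq M<\beta$. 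Hence $\sup_{y\in S(f,\delta)}\|x-y\|\leq\alpha-\eps_0<\alpha$, contradicting $\dc(x)\geq\alpha$. For (b) I run the same separation starting from the point $x$ (outside the convex hull by hypothesis); the resulting slice then contains $x$, so it is admissible in the infimum defining $\dec(x)$, and the same contradiction appears.

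The main obstacle is a pair of small but genuine checks rather than a deep difficulty. First, in the separation step I must verify that the separating functional yields a \emph{legitimate} slice, i.e.\ that $\delta=1-\beta>0$; this holds because $\beta$ is strictly below a value of $\re f$ attained on $B_X$ and hence strictly below $1$. Second, in the ``$\Leftarrow$'' direction the two small parameters must be coordinated: splitting $\delta$ (to absorb the approximation error of the convex combination) and halving $\eps$ (to turn the closed inequality $\|w_{i_0}-x\|\geq\alpha-\eps$ into the strict $>\alpha-\eps'$) is what makes the pigeonhole summand land simultaneously inside the slice and far from $x$. Everything else is routine.
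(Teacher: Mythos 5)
Your proof is correct and is exactly the argument the paper has in mind: the paper states this proposition without proof, describing it as ``a direct application of the Hahn--Banach theorem,'' and your separation step plus the convex-combination/pigeonhole step in the reverse direction is the standard way to fill that in. The only detail worth a half-sentence in a written version is the degenerate case where $\Delta_{2-\alpha+\eps_0}(x)$ is empty (so there is nothing to separate from), but there the conclusion $\dc(x)<\alpha$ is immediate since every slice then lies in the ball of radius $\alpha-\eps_0$ around $x$.
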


Recall from \cite{AHLP} that: 
\begin{itemize}
\itemsep0.3em
\item 
$x \in S_X$ is a Daugavet point if and only if for every non-zero $x^* \in X^*$, the rank-one operator $T = x^* \otimes x$ satisfies $\|\Id + T\| = 1+\|T\|$. 
\item
$x \in S_X$ is a $\Delta$-point if and only if for every $x^* \in X^*$ with $x^* (x)=1$, the projection $P=x^* \otimes x$ satisfies $\| \Id - P\| \geq 2$.
\end{itemize}
In the following theorem, we present generalizations of these results in terms of the Daugavet constant and the $\Delta$-constant. 

\begin{prop}\label{prop:dc-operator}
Let $X$ be a Banach space with $\dim (X) \geq 2$ and $x \in B_X$.  
\begin{enumerate}
\itemsep0.3em
\item[\textup{(a)}] For every rank one operator $T = x^* \otimes x$ with $\| x^* \| \geq 1$, we have 
\[
\| \Id + T \| \geq  (\dc(x)  - 1)\|x^*\| + 1. 
\] 
\item[\textup{(b)}] For every rank one projection $P = x^* \otimes x$ with $x^* \in {X^*}$ satisfying $x^* (x)=1$, we have 
\[
\| \Id - P \| \geq \dec (x). 
\] 
\end{enumerate}
\end{prop}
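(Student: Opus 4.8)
The plan is to prove both inequalities in the same spirit: test the operator on a single vector $z$ produced by the definition of the relevant constant, and bound the norm of its image by evaluating a norming functional $g$ of the difference $z-x$. The only inputs are the slice descriptions $\dc(x)=\inf_{S}\sup_{y\in S}\|x-y\|$ and $\dec(x)=\inf_{S\ni x}\sup_{y\in S}\|x-y\|$, Hahn--Banach to produce $g$, and the trivial bound $\|w\|\geq g(w)$.

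For \textup{(a)}, write $\lambda=\|x^*\|\geq 1$ and $d=\dc(x)$. The decisive choice is the \emph{negative} slice: applying the definition of $\dc(x)$ to $y^*=-x^*/\lambda\in S_{X^*}$ yields, for each $\eps,\delta>0$, a point $z\in S(y^*,\delta)$ with $x^*(z)<-(1-\delta)\lambda$ and $\|z-x\|>d-\eps$. The reason for the negative sign is that then $(\Id+T)z=z+x^*(z)x$ is aligned with $z-x$ rather than opposed to it. Taking $g\in S_{X^*}$ with $g(z-x)=\|z-x\|$ and writing $s=g(z)\leq 1$, so that $g(x)=s-\|z-x\|$, I would expand
\[
g\big((\Id+T)z\big)=s+x^*(z)\big(s-\|z-x\|\big),
\]
and use that both $x^*(z)<-(1-\delta)\lambda$ and $s-\|z-x\|<0$ (valid once $d>1$, the only nontrivial range) to bound the product below by $(1-\delta)\lambda(\|z-x\|-s)$. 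This gives the affine-in-$s$ estimate $g((\Id+T)z)>s\big(1-(1-\delta)\lambda\big)+(1-\delta)\lambda\,\|z-x\|$; minimizing the right-hand side over the admissible range $s\in(d-1-\eps,1]$ and letting $\eps,\delta\to 0^+$ produces exactly $(d-1)\lambda+1$, the binding endpoint being $s=1$ when $\lambda>1$. The range $d\leq 1$ is trivial: $\Id+T$ restricts to the identity on the nonzero subspace $\ker x^*$ (here $\dim X\geq 2$ enters), so $\|\Id+T\|\geq 1\geq (d-1)\lambda+1$.

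For \textup{(b)}, set $\lambda=\|x^*\|$ and note that $1=x^*(x)\leq\lambda\|x\|$ forces $\lambda\geq 1$. I would again test far from $x$ in the slice cut out by $x^*$, but now the requirement that the slice \emph{contain} $x$ is the crux. With $y^*=x^*/\lambda$ one has $y^*(x)=1/\lambda$, so $x\in S(y^*,\delta)$ only when $\delta>1-1/\lambda$; this is precisely the obstruction created by $\lambda$ possibly exceeding $1$. For such admissible $\delta$ the slice is legitimate for $\dec$, so $\sup_{y\in S(y^*,\delta)}\|x-y\|\geq\dec(x)$, and for any $\alpha<\dec(x)$ I can select $z\in S(y^*,\delta)$ with $\|z-x\|>\alpha$ and $x^*(z)>\lambda(1-\delta)=:c$. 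Choosing $g\in S_{X^*}$ norming $z-x$ and using $g(x)=g(z)-\|z-x\|<1-\alpha<0$ (taking $\alpha>1$, the only nontrivial range) together with $x^*(z)>c>0$, I would estimate
\[
g\big((\Id-P)z\big)=g(z)-x^*(z)\,g(x)>g(z)(1-c)+c\,\|z-x\|>c\,\alpha,
\]
the last step using $c<1$ and $g(z)>\alpha-\|x\|>0$.

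The decisive final step in \textup{(b)} is to let $\delta$ descend to the threshold $1-1/\lambda$, so that $c=\lambda(1-\delta)\uparrow 1$ and the lower bound $c\alpha\to\alpha$; as $\alpha<\dec(x)$ was arbitrary this yields $\|\Id-P\|\geq\dec(x)$, while the range $\dec(x)\leq 1$ is again trivial since $\Id-P$ is a nonzero projection and hence has norm at least $1$. I expect the genuine obstacle to be exactly this tension in \textup{(b)} between the admissibility constraint $\delta>1-1/\lambda$ and the need for $x^*(z)$ to be near $1$: it is the happy coincidence that the \emph{thinnest} admissible slice drives $c$ to $1$ that makes the estimate come out to precisely $\dec(x)$ and no more. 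Once this threshold phenomenon is identified, both parts reduce to the routine affine optimizations indicated above.
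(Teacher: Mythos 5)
Your argument is correct and follows essentially the same route as the paper: in both parts you test $\Id\pm T$ on a point chosen from the slice determined by $\pm x^*/\|x^*\|$ that lies at distance at least $\dc(x)-\eps$ (resp.\ $\dec(x)-\eps$) from $x$, handle the range $\leq 1$ trivially, and let the slice parameters tend to their limits. The only cosmetic difference is that you extract the lower bound via a norming functional for $z-x$ and an affine optimization in $g(z)$, where the paper uses the direct triangle-inequality estimate $\|y+tx\|\geq t\|y+x\|-(t-1)\|y\|$ for $t\geq1$; both yield the same constants.
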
 

\begin{proof}
(a): If $\dc(x) \leq 1$, then the inequality is trivial since
\[
\|\Id + T\| \geq 1 \geq (\dc(x) - 1)\|x^*\| + 1,
\]
so we may assume that $\dc(x) > 1$. Let $x^* \in {X^*}$ with $\|x^*\| \geq 1$, $\delta >0$ and $0 < \eps < \dc(x) -1$ be given. Suppose first that $\|x^*\| = 1$. Since $\dc(-x)=\dc(x)$, we can find $y \in S(x^*, \delta )$ so that $\|y+x\| > \dc(x) - \eps> 1$. This implies that 
\begin{align*}
\| \Id + x^* \otimes x\| &\geq \|y + x^* (y) x\| \geq \|y+x\| - \delta > \dc(x) - \eps - \delta. 
\end{align*}
Since $\delta >0$ and $\eps >0$ were chosen arbitrarily, we conclude that $\| \Id + x^* \otimes x\| \geq \dc(x)$.

Next, suppose that $\|x^*\| >1$. Fix $y \in S\bigl(\frac{x^*}{\|x^*\|}, \delta \bigr)$ so that $\|x+y\| > \dc(x) - \eps> 1$. For a small enough $\delta>0$ such that $x^*(y) > 1$, we have
\begin{align*}
\| \Id + x^* \otimes x\| &\geq \|y + x^* (y) x\| \geq x^*(y) \|y+x\| -x^*(y) +1 \\
&> x^*(y) (\dc(x) - \eps -1) +1 > \|x^*\| (1-\delta) (\dc(x) - \eps -1) +1. 
\end{align*}
Again from the choice of $\delta >0$ and $\eps >0$, it follows that $\| \Id + x^* \otimes x\| \geq (\dc(x)-1) \|x^*\| +1$.

(b): First note that in at least $2$-dimensional Banach space $X$, the inequality is trivial if $\dec(x) \leq 1$. For $\dec(x) > 1$ fix $\varepsilon, \delta>0$ small and find $y \in B_X$ such that $x^*(y) > 1- \delta$ and $\|x- y\| > \dec(x) -\varepsilon>1$. If $x^*(y) \leq 1$, then
\[
\|\text{Id} - P \| \geq \|y - x^*(y) x\| \geq \|y - x\| - \delta > \dec(x) - \varepsilon - \delta.
\]
If $x^*(y)>1$, then
\begin{align*}
\|\text{Id} - P \| &\geq \|y - x^*(y) x\| \\ 
&\geq x^*(y) \|x-y\| - x^*(y) +1 \\
&> x^*(y) ( \dec(x) - \varepsilon - 1) + 1 > (1-\delta)(\dec(x) - \varepsilon - 1) + 1.
\end{align*}
As $\varepsilon$ and $\delta$ were arbitrary, this concludes the proof.
\end{proof} 

In order to show that the notion of Daugavet and $\Delta$-points coincide for an $L_1$-predual space, the equality $S_X \cap \big( \cap_{\eps>0}\, \overline{\co} \, \Delta_\eps^X (x) \big) = S_X \cap \big( \cap_{\eps>0}\, \overline{\co} \, \Delta_\eps^{X^{**}} (x) \big)$ for any $x \in S_X$ is proved in \cite[Lemma 3.5]{AHLP}. 
Here, $\Delta_\eps^X (x)$ is the set $\Delta_\eps (x)$ and $\Delta_\eps^{X^{**}} (x) = \{ z \in B_{X^{**}} : \|z-x\| \geq 2- \eps\}$.  
We would like to extend this result by replacing the constant $2$ in the definition of $\Delta_\eps$ with $\alpha \in (0,2]$.  

Given $x \in B_X$, the notation $\dc_{X^{**}} (x)$ and $\dec_{X^{**}} (x)$ denotes the Daugavet constant and $\Delta$-constant, respectively, of the point $x$ where $x$ is considered as an element of $X^{**}$. Just to emphasize, let us denote by $\dc_X (x) = \dc (x)$ and $\dec_X (x) = \dec (x)$ in the proposition below.

\begin{prop}\label{prop:PLR}
Let $X$ be a Banach space, $x \in B_X$ and $0< \alpha \leq 2$. Then 
\begin{equation}\label{eq:bidual}
B_X \cap \overline{\co} \,\Delta_{2-\alpha}^{X^{**}} (x) \subseteq \cap_{\eps>0}\, \overline{\co} \,\Delta_{2-\alpha +\eps}^{X} (x). 
\end{equation} 
In particular, we have the following: 
 \begin{enumerate}
 \itemsep0.3em
\item[\textup{(a)}] $\dc (x) \geq \dc_{X^{**}} (x)$.  
\item[\textup{(b)}] $\dec (x) =\dec_{X^{**}} (x)$.
% If $\dc_{X^{**}} (x) \geq \alpha$, then $\dc_X (x) \geq \alpha$. 
\end{enumerate}
\end{prop}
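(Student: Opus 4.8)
The plan is to establish the key inclusion \eqref{eq:bidual} first, and then to read off (a) and (b) from the geometric characterization in Proposition \ref{prop:dc-characterization}, together with the elementary observation that $\Delta_{2-\alpha+\eps}^{X}(x)\subseteq\Delta_{2-\alpha+\eps}^{X^{**}}(x)$ for all $\eps$. The natural tool throughout is Goldstine's theorem, which supplies the weak$^*$-density of $B_X$ in $B_{X^{**}}$.

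For \eqref{eq:bidual} I would argue by contraposition. Fix $\eps>0$ and a point $y\in B_X$ with $y\notin\overline{\co}\,\Delta_{2-\alpha+\eps}^{X}(x)$; the goal is to show $y\notin\overline{\co}\,\Delta_{2-\alpha}^{X^{**}}(x)$. Since $\overline{\co}\,\Delta_{2-\alpha+\eps}^{X}(x)$ is closed and convex, the Hahn--Banach separation theorem furnishes $x^*\in S_{X^*}$ and reals $s<t$ with $x^*(w)\le s$ for every $w\in\Delta_{2-\alpha+\eps}^{X}(x)$ and $x^*(y)\ge t$. Viewing $x^*$ as a (weak$^*$-continuous, hence norm-continuous) functional on $X^{**}$, it then suffices to verify that $\langle z,x^*\rangle\le s$ for every $z\in\Delta_{2-\alpha}^{X^{**}}(x)$: the half-space $\{z:\langle z,x^*\rangle\le s\}$ is norm-closed and convex, so the bound passes to $\overline{\co}\,\Delta_{2-\alpha}^{X^{**}}(x)$, while $\langle y,x^*\rangle\ge t>s$ separates $y$ from that set.

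The heart of the matter, and the step I expect to be the main obstacle, is this verification, because the norm is only weak$^*$-lower-semicontinuous on $X^{**}$: a net in $B_X$ converging weak$^*$ to a point $z$ with $\|z-x\|\ge\alpha$ need not remain far from $x$. To circumvent this I would control two functionals at once. Given $z\in B_{X^{**}}$ with $\|z-x\|\ge\alpha$ and $\eta>0$ with $2\eta\le\eps$, first choose $g\in S_{X^*}$ with $\langle z-x,g\rangle>\alpha-\eta$. By Goldstine's theorem the weak$^*$-neighbourhood $\{u\in B_{X^{**}}:|\langle u-z,x^*\rangle|<\eta,\ |\langle u-z,g\rangle|<\eta\}$ of $z$ meets $B_X$, so there is $w\in B_X$ with $x^*(w)>\langle z,x^*\rangle-\eta$ and $g(w)>\langle z,g\rangle-\eta$. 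Then $\|w-x\|\ge g(w-x)=g(w)-g(x)>\langle z-x,g\rangle-\eta>\alpha-2\eta\ge\alpha-\eps$, so $w\in\Delta_{2-\alpha+\eps}^{X}(x)$ and therefore $x^*(w)\le s$; consequently $\langle z,x^*\rangle<s+\eta$. Letting $\eta\to0$ yields $\langle z,x^*\rangle\le s$, completing the verification and hence \eqref{eq:bidual}.

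Finally, (a) and (b) follow by index-chasing with Proposition \ref{prop:dc-characterization}. If $\alpha\le\dc_{X^{**}}(x)$, then the characterization in $X^{**}$ gives $B_X\subseteq B_{X^{**}}\subseteq\overline{\co}\,\Delta_{2-(\alpha-\eps)}^{X^{**}}(x)$ for each small $\eps>0$, so \eqref{eq:bidual} applied with $\alpha-\eps$ in place of $\alpha$ yields $B_X\subseteq\overline{\co}\,\Delta_{2-\alpha+\beta}^{X}(x)$ for every $\beta>0$; by the characterization in $X$ this gives $\dc(x)\ge\alpha$, proving (a). The identical argument with ``$x\in$'' replacing ``$B_X\subseteq$'' gives $\dec(x)\ge\dec_{X^{**}}(x)$. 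The reverse inequality in (b) needs only the trivial inclusion above: if $\dec(x)\ge\alpha$, then $x\in\overline{\co}\,\Delta_{2-\alpha+\eps}^{X}(x)\subseteq\overline{\co}\,\Delta_{2-\alpha+\eps}^{X^{**}}(x)$ for all $\eps>0$, whence $\dec_{X^{**}}(x)\ge\alpha$. Combining the two inequalities establishes the equality in (b).
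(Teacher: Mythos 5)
Your proof is correct, and it establishes the key inclusion \eqref{eq:bidual} by a genuinely different route from the paper. The paper works ``downstairs'': given $y\in B_X$ close to a convex combination $\sum_i\lambda_i y_i^{**}$ of points of $\Delta_{2-\alpha}^{X^{**}}(x)$, it applies the Principle of Local Reflexivity to $\spann\{x,y,y_1^{**},\ldots,y_m^{**}\}$ to obtain an operator fixing $x$ and $y$ and almost preserving norms, so that the images of the $y_i^{**}$ land in $\Delta_{2-\alpha+\eps}^{X}(x)$ while their convex combination still approximates $y$. You instead dualize: you separate $y$ from $\overline{\co}\,\Delta_{2-\alpha+\eps}^{X}(x)$ by a functional $x^*$ and show the same half-space absorbs all of $\Delta_{2-\alpha}^{X^{**}}(x)$, using Goldstine's theorem with simultaneous control of $x^*$ and of a functional $g$ almost norming $z-x$. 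That simultaneous control is exactly the right fix for the mere weak$^*$-lower-semicontinuity of the bidual norm, and it is essentially the same device the paper uses later to prove $\dc(x)=\wdc(J(x))$ in Section 3. Your route avoids local reflexivity entirely, resting only on Hahn--Banach and Goldstine, and as a small bonus it yields the formally stronger inclusion in which the left-hand side of \eqref{eq:bidual} carries the weak$^*$-closed convex hull of $\Delta_{2-\alpha}^{X^{**}}(x)$, since the separating half-space is weak$^*$-closed. The deductions of (a) and (b) from \eqref{eq:bidual} via Proposition \ref{prop:dc-characterization} agree with the paper's. The only degenerate case, $\Delta_{2-\alpha+\eps}^{X}(x)=\emptyset$, is harmless: your Goldstine step then forces $\Delta_{2-\alpha}^{X^{**}}(x)=\emptyset$ as well, so both sides of \eqref{eq:bidual} are empty.
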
 

%\begin{proof}
%
%\end{proof} 

\begin{proof}
The inclusion \eqref{eq:bidual} can be proved similarly as in \cite[Lemma 3.5]{AHLP}. For the sake of completeness, we present the proof. Let $\eps >0$ be given. Let $y \in B_X$, $y_1^{**}, \ldots, y_{m}^{**} \in \Delta_{2-\alpha}^{X^{**}} (x)$ and $\lambda_1,\ldots, \lambda_m > 0$ be such that $\sum_{i=1}^m \lambda_i = 1$ and $\|y - \sum_{i=1}^m \lambda_i y_i^{**} \| < \frac{\eps}{2}$. Put $E = \spann \{ x, y, y_1^{**}, \ldots, y_m^{**} \} \subseteq X^{**}$ and let $0<\eta<1$ be such that $(1-\eta)\alpha > \alpha-\eps$. By Principle of Local Reflexivity, there exists a linear operator $T : E \rightarrow X$ such that 
 \begin{itemize}
 \itemsep0.3em
\item $Te = e$ for every $e \in E \cap X$; 
\item $(1-\eta)\|e\| \leq \|Te\| \leq (1+\eta) \| e\|$ for every $e \in E$. 
\end{itemize} 
It follows that $\|x - Ty_{i}^{**}\| = \|Tx - Ty_i^{**}\| \geq (1-\eta) \alpha > \alpha - \eps$. That is, $Ty_i^{**} \in \Delta_{2-\alpha+\eps}^X (x)$ for $i=1,\ldots, m$. 
Moreover, 
\begin{align*}
\left\| y - \sum_{i=1}^m \lambda_i T y_i^{**} \right\| = \left\| Ty - \sum_{i=1}^m \lambda_i T y_i^{**}\right\| \leq (1+\eta)\frac{\eps}{2} < \eps.  
\end{align*} 
It follows that $y \in \overline{\co} \,\Delta_{2-\alpha +\eps}^{X} (x)$.

(a): By Proposition \ref{prop:dc-characterization}, if $\dc_{X^{**}} (x) \geq \alpha$, then $B_{X^{**}} \subseteq \overline{\co}  \Delta_{2-\alpha+\eps}^{X^{**}} (x)$ for every $\eps >0$. Let us fix $\eps >0$. Note that if $y \in B_X$, then $y \in \overline{\co}  \Delta_{2-\alpha+\eps}^{X^{**}} (x)$. By \eqref{eq:bidual}, we have that $y \in \overline{\co} \Delta_{2-\alpha+\eps+ \eta}^{X} (x)$ for every $\eta >0$. In particular, $y \in \overline{\co} \Delta_{2-\alpha+2\eps}^{X} (x)$ which shows that $B_X \subseteq \Delta_{2-\alpha+2\eps}^{X} (x)$. As this holds for every $\eps >0$, by Proposition \ref{prop:dc-characterization}, we conclude that $\dc (x) \geq \alpha$.  

(b): Note from Proposition \ref{prop:dc-characterization} that $x \in \overline{\co} \Delta_{2-\dec(x)+\eps}^{X} (x)$ for every $\eps >0$, which is, by \eqref{eq:bidual}, equivalent to that $x \in \overline{\co} \Delta_{2-\dec(x)+\eps}^{X^{**}} (x)$ for every $\eps >0$. It follows that $\dec (x) \geq \alpha$ if and only if $\dec_{X^{**}}(x) \geq \alpha$. 
\end{proof} 

\begin{rem}
Let $Y$ be a subspace of $X$ and let $x \in B_Y$. Let us denote by $\dec_Y (x)$ the $\Delta$-constant of $x$ when the point $x$ is considered as a point of $Y$. Given $x \in B_Y$, $\eps >0$ and a slice $S(x^*,\delta)$ of $B_X$ containing $x$, consider $x^* \vert_Y \in Y^*$ which satisfies clearly that $x^*\vert_Y (x) > 1-\delta$. Then we may find $u \in B_Y \cap S(x^*, \delta)$ such that $\|u-x\| > \dec_Y (x) -\eps$. This implies that $\dec_X (x) \geq \dec_Y (x)$ which, in particular, shows that $\dec_{X^{**}} (x) \geq \dec_X (x)$ and that $\Delta$-points pass to superspaces. 
\end{rem}

\subsection{Relationship to extremal points}\label{subsec:convex}

By nature, the concept of a $\Delta$-point can be understood as being diametrically opposed to that of a denting point. The following confirms the intuition by showing that the set of denting points is precisely the set of points with vanishing $\Delta$-constant.

%\textcolor{blue}{We have a bit more interesting situation for Banach spaces with the \textup{RNP}. In fact, it is shown that $\sup_{x \in S_{\ell_1}} \dc(x)=2$ \cite[Lemma 6.6]{ALMP} while $\dec(x)<2$ for every $x \in S_{\ell_1}$, and there even exists a Banach space $X$ with the \textup{RNP} such that $\dc(x)=2$ for some $x \in S_X$ \cite[Example 3.1]{V2023}. }

\begin{prop}\label{prop:denting_dec}
Let $X$ be a Banach space. Then, we have
$$
\dent(B_X) = \{x \in S_X : \dec(x) = 0 \}.
$$
\end{prop}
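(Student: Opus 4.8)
<br>

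The goal is to prove the set equality $\dent(B_X) = \{x \in S_X : \dec(x) = 0\}$ by showing both inclusions, and the natural tool is the reformulation $\dec(x) = \inf_{S \ni x} \sup_{y \in S} \|x - y\|$ established just before this proposition, where the infimum runs over all slices $S$ of $B_X$ containing $x$.

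The plan is as follows. First I would observe that both sides automatically force $x \in S_X$: a denting point lies on the sphere by definition, and for the right-hand side one checks that if $\|x\| < 1$ then $\dec(x) \geq 1 - \|x\| > 0$ (this follows from the inequality $1 - \|x\| \leq \dc(x) \leq \dec(x)$ recorded after Definition~\ref{def:dc_and_dec}), so the condition $\dec(x) = 0$ already entails $\|x\| = 1$. Thus I may restrict attention to $x \in S_X$ throughout.

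For the inclusion $\dent(B_X) \subseteq \{\dec(x) = 0\}$, let $x$ be a denting point. Then for every $\eta > 0$ there is a slice $S$ of $B_X$ with $x \in S$ and $\diam(S) < \eta$. Using the formula $\dec(x) = \inf_{S \ni x} \sup_{y \in S}\|x - y\|$, this slice witnesses $\dec(x) \leq \sup_{y \in S}\|x-y\| \leq \diam(S) < \eta$. Since $\eta$ is arbitrary, $\dec(x) = 0$. For the reverse inclusion, suppose $x \in S_X$ with $\dec(x) = 0$. The formula gives, for each $\eta > 0$, a slice $S \ni x$ with $\sup_{y \in S}\|x - y\| < \eta/2$. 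The mild obstacle here is that a small value of $\sup_{y\in S}\|x-y\|$ controls distances \emph{from $x$} but not the full diameter of $S$ directly; this is resolved by the triangle inequality: for any $y, z \in S$, $\|y - z\| \leq \|y - x\| + \|x - z\| < \eta$, so $\diam(S) \leq \eta$. Hence $x$ lies in slices of arbitrarily small diameter, i.e.\ $x$ is a denting point.

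I expect no serious obstacle; the content is essentially the observation that the quantity $\sup_{y \in S}\|x-y\|$ and the diameter $\diam(S)$ of a slice containing $x$ are comparable up to a factor of two, so that one can be made small exactly when the other can. The only point requiring a little care is the preliminary reduction showing that vanishing $\Delta$-constant forces the point onto the unit sphere, which is where the lower bound $\dec(x) \geq 1 - \|x\|$ does the work. Assembling these observations yields the claimed equality.
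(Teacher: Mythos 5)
Your proof is correct and takes essentially the same route as the paper: both directions rest on the comparison (via the triangle inequality, up to a factor of $2$) between $\sup_{y\in S}\|x-y\|$ and $\diam(S)$ for a slice $S$ containing $x$, with your use of the $\inf$--$\sup$ reformulation of $\dec(x)$ being only a cosmetic repackaging of the paper's direct appeal to the definition. Your preliminary remark that $\dec(x)=0$ forces $\|x\|=1$ is a correct (and harmless) aside, though not strictly needed since the right-hand set is already restricted to $S_X$.
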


\begin{proof}
Suppose first that $x \in S_X$ satisfies $\dec(x)=0$. Let $\eta>0$ be given. Then, there exist $\eps>0$, $\delta>0$ and $y^* \in S_{X^*}$ such that $x \in S(y^*,\delta)$ and $\|y-x\| \leq \eta - \eps$ for every $y \in S(y^*,\delta)$. Thus $S(y^*,\delta)$ is a slice of $B_X$ containing $x$ such that $\diam S(y^*,\delta) \leq 2(\eta-\eps)$. Since $\eta>0$ was arbitrary, it follows that $x$ is a denting point of $B_X$.

On the other hand, let $x \in \dent(B_X)$ and $\eta>0$ be given. Fix $\delta>0$ and $y^* \in S_{X^*}$ such that $x \in S(y^*,\delta)$ and $\diam S(y^*,\delta) < \eta/2$. Then $\|y-x\| \leq \eta/2$ for every $y \in S(y^*,\delta)$, so $\dec(x) \leq \eta$. This finishes the proof.
\end{proof}

We refer to Theorem \ref{theorem:deltanotdauga} for the remark that the same kind of result cannot be guaranteed in the case of Daugavet constants. On the other hand, it is observed in \cite[Proposition 3.1]{JRZ} that Daugavet points have to be far from the set of denting points. In general, this result can be interpreted as follows by using the Daugavet constant, which will be used in Section \ref{subsec:lipfree}.

\begin{lemma}\label{lemma:dentdc}
Let $X$ be a Banach space and $x \in B_X$. If $y \in \dent(B_X)$, then $\|x-y\| \geq \dc(x)$. 
\end{lemma}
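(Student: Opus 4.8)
The plan is to prove the contrapositive-flavored estimate directly using the denting point property of $y$ together with the characterization $\dc(x) = \inf_{S} \sup_{z \in S} \|x - z\|$ established in the excerpt. The key observation is that a denting point $y \in \dent(B_X)$ admits slices of arbitrarily small diameter that contain $y$, and on such a slice $S$ the point $y$ is essentially the whole slice up to a small error, so $\sup_{z \in S}\|x - z\|$ can be made as close to $\|x - y\|$ as we like.

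First I would fix $\eta > 0$ and invoke the definition of denting point: there exist $y^* \in S_{X^*}$ and $\delta > 0$ such that $y \in S(y^*, \delta)$ and $\diam S(y^*,\delta) < \eta$. Then for every $z \in S(y^*,\delta)$ we have $\|z - y\| < \eta$, hence by the triangle inequality
\[
\|x - z\| \leq \|x - y\| + \|y - z\| < \|x - y\| + \eta.
\]
Taking the supremum over $z \in S(y^*,\delta)$ gives $\sup_{z \in S(y^*,\delta)}\|x - z\| \leq \|x - y\| + \eta$. On the other hand, from the formula $\dc(x) = \inf_{S} \sup_{z \in S}\|x - z\|$ (the infimum taken over all slices $S$ of $B_X$), the particular slice $S(y^*,\delta)$ witnesses $\dc(x) \leq \sup_{z \in S(y^*,\delta)}\|x - z\| \leq \|x - y\| + \eta$.

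Since $\eta > 0$ was arbitrary, letting $\eta \to 0$ yields $\dc(x) \leq \|x - y\|$, which is exactly the claimed inequality. This is the entire argument; it reduces cleanly to combining the dentability of $y$ with the infimum-of-slices expression for $\dc(x)$.

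I do not anticipate a genuine obstacle here, since every ingredient is already available in the excerpt. The only point requiring a small amount of care is making sure the quantifiers line up: one must choose the small-diameter slice to contain $y$ (which the denting point definition guarantees) and then use that same slice as a competitor in the infimum defining $\dc(x)$. The fact that $y$ itself lies in the slice is not strictly needed for the diameter bound, but it is the natural way the denting hypothesis is packaged, and it guarantees the slice is nonempty so that the supremum is meaningful.
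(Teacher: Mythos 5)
Your argument is correct and is essentially the same as the paper's: both take a slice of diameter less than $\eta$ containing the denting point $y$ and apply the triangle inequality to conclude $\dc(x)\leq\|x-y\|+2\eta$ (the paper phrases this as a proof by contradiction, you phrase it directly via the formula $\dc(x)=\inf_S\sup_{z\in S}\|x-z\|$, but the content is identical). The only nitpick is your closing remark: the hypothesis $y\in S(y^*,\delta)$ \emph{is} needed for the step $\|y-z\|<\eta$, not merely to ensure nonemptiness, though your proof does use it correctly there.
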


\begin{proof}
Suppose that $y \in \dent(B_X)$ and $\|x-y\| < \dc(x)$. Take a slice $S = S(x^*, \delta)$ containing $y$ such that $\diam (S) < \eta$, where $0< 2\eta < \dc(x) - \|x-y\|$. Find $u \in S$ so that $\|x-u\| > \dc(x) -\eta$. Then 
\[
\|x-y\| \geq \|x-u \| - \|u-y\| > \dc(x) - \eta -\eta > \|x-y\|,
\]
which is a contradiction.
\end{proof}

%NOTE: What can we say about the set $\{ x \in S_X : \dc (x) =0\}$? Is there a point $x \in S_X$ satisfying that $\dc(x) = 0$ but $\dec(x) >0$? If a point $x$ is an extreme point of $B_X$, then $\dc (x) =0$? See Section XXX. 

Recall that a Banach space $X$ is said to have the \textit{Kadec property} if the norm topology and the weak topology coincide on $S_X$. It is known that $X$ is strictly convex with the Kadec property if and only if every point in $S_X$ is a denting point of $B_X$ \cite{LLT86}. It is well known that $X$ has the Radon-Nikod\'ym property if and only if every bounded closed convex subset of $X$ is the closed convex hull of its denting points. As a direct consequence of Proposition \ref{prop:denting_dec}, we obtain the following.

\begin{cor} Let $X$ be a Banach space.
\begin{enumerate}
\itemsep0.3em
\item [\textup{(a)}] If $X$ is strictly convex with the Kadec property \textup{(}in particular, $X$ is locally uniformly rotund\textup{)}, then $\dec(x)=0$ for every $x \in S_X$.
\item [\textup{(b)}] If $B_X$ is dentable, then, we have $\inf_{x \in S_X} \dec(x)=0$. Moreover, if $X$ has the Radon-Nikod\'ym property, then $B_X = \overline{\co}\{x \in S_X : \dec(x) = 0 \}$.
\end{enumerate} 
\end{cor}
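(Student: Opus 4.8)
The plan is to prove the Corollary as a routine consequence of Proposition~\ref{prop:denting_dec}, which already identifies $\dent(B_X) = \{x \in S_X : \dec(x)=0\}$, together with the two classical characterizations of denting points recalled just before the statement. For part~(a), I would invoke the result of \cite{LLT86} that $X$ is strictly convex with the Kadec property if and only if every point of $S_X$ is a denting point of $B_X$. Under the hypothesis, every $x \in S_X$ therefore lies in $\dent(B_X)$, and Proposition~\ref{prop:denting_dec} immediately gives $\dec(x)=0$ for all such $x$. The parenthetical case, that local uniform rotundity suffices, follows because a locally uniformly rotund space is in particular strictly convex and has the Kadec property (local uniform rotundity of the norm forces norm convergence of any sequence on $S_X$ whose norms converge together with the norm of their average to the limit point, which is exactly what the Kadec property needs on the sphere).

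For the first assertion of part~(b), I would argue that dentability of $B_X$ means, by definition recalled in the introduction, that $B_X$ contains slices of arbitrarily small diameter. Given $\eta>0$, choose a slice $S(y^*,\delta)$ with $\diam S(y^*,\delta) < \eta$ and pick any $x_0 \in S(y^*,\delta)$; I may assume $x_0 \in S_X$ after a small perturbation, or more cleanly, observe that the point of $S(y^*,\delta)$ realizing the supremum $\sup_{y \in S} y^*(y)$ can be taken near $S_X$. The point $x_0$ then satisfies $\dec(x_0) \leq \diam S(y^*,\delta) < \eta$ by the computation in the proof of Proposition~\ref{prop:denting_dec} (namely $\|y - x_0\| \leq \diam S < \eta$ for all $y$ in the slice forces $\dec(x_0) \leq \eta$). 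Taking the infimum over $x \in S_X$ and letting $\eta \to 0$ yields $\inf_{x \in S_X}\dec(x)=0$.

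For the Radon--Nikod\'ym part, I would use the classical characterization, also recalled in the introduction, that $X$ has the Radon--Nikod\'ym property if and only if every bounded closed convex subset of $X$ is the closed convex hull of its denting points. Applying this to the bounded closed convex set $B_X$ gives $B_X = \overline{\co}\,\dent(B_X)$, and substituting the identification $\dent(B_X) = \{x \in S_X : \dec(x)=0\}$ from Proposition~\ref{prop:denting_dec} produces exactly $B_X = \overline{\co}\{x \in S_X : \dec(x)=0\}$.

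The only point requiring a little care, and hence the main (mild) obstacle, is the slight subtlety in part~(b) that dentability of $B_X$ guarantees slices of small diameter but the chosen points inside such slices need not a priori lie on $S_X$, whereas the $\Delta$-constant is most naturally discussed for sphere points; I expect this to be handled by noting that a point maximizing the functional over the slice (or a nearby extreme-type point) is arbitrarily close to $S_X$ and inherits an arbitrarily small $\Delta$-constant via the $1$-Lipschitz property of $x \mapsto \dec(x)$ established in Remark~\ref{rem:dc_Lipschitz} (an analogous estimate holds for $\dec$). Everything else is a direct citation of the quoted characterizations combined with Proposition~\ref{prop:denting_dec}.
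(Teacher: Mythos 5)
Your proof is correct and follows exactly the route the paper intends: the corollary is stated there as a direct consequence of Proposition~\ref{prop:denting_dec} combined with the two classical characterizations (the \cite{LLT86} result for strictly convex Kadec spaces and the denting-point characterization of the Radon--Nikod\'ym property), with no further argument given. One small correction: your fallback for the sphere-point issue in part~(b) appeals to a $1$-Lipschitz estimate for $x \mapsto \dec(x)$ ``analogous'' to Remark~\ref{rem:dc_Lipschitz}, but the paper explicitly notes (Remark~\ref{rem:ell-infty_c0}) that $\dec$ is \emph{not} Lipschitz in general, so that route is unavailable. Fortunately it is also unnecessary: any slice $S(y^*,\delta)$ of $B_X$ with $y^* \in S_{X^*}$ contains a point of $S_X$, since for $y \in S(y^*,\delta)$ one has $\|y\| \geq y^*(y) > 1-\delta > 0$ and $y^*\bigl(y/\|y\|\bigr) \geq y^*(y) > 1-\delta$, so $y/\|y\|$ lies in the same small-diameter slice and inherits $\dec \leq \diam S(y^*,\delta)$ directly.
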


In the case of locally uniformly rotund (for short, LUR) points $x \in B_X$, we can compute the exact value of $\dc(x)$. 
Recall that $x \in B_X$ is a \emph{\textup{LUR} point} if $\|x_n - x \| \rightarrow 0$ whenever a sequence $(x_n) \subseteq X$ with $\|x_n\| \rightarrow \|x\|$ satisfies that $ \|x_n+x\| \rightarrow 2\|x\|$. Observe that $x \in X \setminus \{0\}$ is a LUR point if and only if $\| x_n - x/\|x\| \| \rightarrow 0$ whenever a sequence $(x_n) \subseteq S_X$ satisfies that $ \|x_n + x\| \rightarrow 1 + \|x\|$.

\begin{prop}
Let $X$ be a Banach space, and let $x \in B_X$ be a \textup{LUR} point. Then, we have $\dc(x) = 1-\|x\|$.
\end{prop}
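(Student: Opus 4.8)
The plan is to show both inequalities $\dc(x) \leq 1 - \|x\|$ and $\dc(x) \geq 1 - \|x\|$, where the second is already guaranteed by the universal lower bound $1 - \|x\| \leq \dc(x)$ recorded right after Definition \ref{def:dc_and_dec}. So the entire content is the upper bound $\dc(x) \leq 1 - \|x\|$, and the LUR hypothesis must be what forces it. I would first dispose of the trivial case $x = 0$, where LUR-ness is vacuous or degenerate and $\dc(0) = 1$ by the general bounds, matching $1 - \|0\|$. So assume $x \neq 0$.

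For the upper bound I would exploit the slice-infimum formula $\dc(x) = \inf_{S} \sup_{y \in S} \|x - y\|$ proved in the excerpt: it suffices to exhibit, for each $\eta > 0$, a single slice $S$ of $B_X$ on which $\sup_{y \in S}\|x - y\|$ is close to $1 - \|x\|$. The natural slice to pick is one that ``points in the direction of $x$'': choose $x^* \in S_{X^*}$ with $x^*(x) = \|x\|$ (Hahn--Banach), and consider slices $S(x^*, \delta)$ for small $\delta$. The key step is to verify that as $\delta \to 0^+$, we have $\sup_{y \in S(x^*,\delta)} \|x - y\| \to 1 - \|x\|$; this is precisely where LUR-ness enters, and it is the main obstacle. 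Concretely I would argue by contradiction: if the supremum stayed bounded away from $1 - \|x\|$, there would be a sequence $\delta_n \to 0$ and points $y_n \in S(x^*, \delta_n)$ with $\|x - y_n\|$ bounded below by some $1 - \|x\| + c$ with $c > 0$, while simultaneously $x^*(y_n) \to 1$ forces $\|y_n\| \to 1$. The plan is to extract from the relation $x^*(y_n) \to 1$ together with $x^*(x) = \|x\|$ that $\|y_n + x\| \to 1 + \|x\|$ (since $x^*$ tests $\|y_n + x\| \geq x^*(y_n + x) = x^*(y_n) + \|x\| \to 1 + \|x\|$, and the reverse is the triangle inequality), so by the characterization of LUR points restated in the excerpt we would get $\|y_n - x/\|x\|\| \to 0$.

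Here I must reconcile the LUR characterization, which compares $y_n$ to the \emph{normalized} point $x/\|x\|$, with the distance $\|y_n - x\|$ that actually appears in $\dc(x)$. The cleanest route is to note $\|y_n - x/\|x\|\| \to 0$ gives $y_n \to x/\|x\|$ in norm, hence $\|x - y_n\| \to \|x - x/\|x\|\| = |\,\|x\| - 1\,| = 1 - \|x\|$, directly contradicting $\|x - y_n\| \geq 1 - \|x\| + c$. This contradiction establishes $\sup_{y \in S(x^*,\delta)} \|x - y\| \to 1 - \|x\|$, so the infimum over all slices is at most $1 - \|x\|$, giving $\dc(x) \leq 1 - \|x\|$. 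Combined with the general lower bound this yields the equality. The one technical point to get right is extracting $\|y_n\| \to 1$ and $\|y_n + x\| \to 1 + \|x\|$ from $y_n \in S(x^*,\delta_n)$; both follow from $x^*(y_n) > 1 - \delta_n$ and $\|y_n\| \leq 1$ by sandwiching with $x^*$, so no compactness or reflexivity is needed, and the proof works in any Banach space.
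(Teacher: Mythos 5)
Your proposal is correct and follows essentially the same route as the paper: the same Hahn--Banach functional $x^*$ with $x^*(x)=\|x\|$, the same contradiction argument showing via the LUR property that points of $S(x^*,\delta)$ must converge in norm to $x/\|x\|$ as $\delta\to 0$, and the same triangle-inequality step converting $\|y-x/\|x\|\|\to 0$ into $\|y-x\|\to 1-\|x\|$. The only (shared, easily repaired) technicality is that the restated LUR characterization is for sequences in $S_X$, so one should normalize the $y_n$ (using $\|y_n\|\to 1$) before invoking it.
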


\begin{proof}
It is clear that $\dc(x) \geq 1- \|x\|$ from the definition. In order to show that $\dc(x) \leq 1 - \|x\|$, let $\eps>0$ be given and fix $x^* \in S_{X^*}$ such that $x^*(x) = \|x\|$. We claim that there exists $\delta>0$ such that if $x^*(y) > 1-\delta$ for some $y \in B_X$, then $\bigl\|y-x/\|x\|\bigr\| < \eps$. Otherwise, we may choose $y_n \in S(x^*, 1/n)$ for each $n \in \N$ such that $\bigl\| y_n - x/\|x\| \bigr\| \geq \eps$ for all $n \in \N$. But notice that
$$
\|y_n+x\| \geq x^*(y_n) +x^*(x) > 1+\|x\|-\frac{1}{n};
$$
thus, since $x$ is a LUR point, we can deduce that $\|y_n - x/\|x\|\| \rightarrow 0$. This contradicts the choice of $(y_n)$. Now, observe that for every $y \in S(x^*,\delta)$,
$$
\|y-x\| \leq \left\|y - \frac{x}{\|x\|}\right\| + 1- \|x\| < 1-\|x\|+\eps.
$$
Since $\eps>0$ was arbitrary, this shows that $\dc(x) \leq 1 - \|x\|$.
\end{proof}

Dealing with Hilbert spaces, we can compute the $\Delta$-constant of points in the unit ball. Note that for the $1$-dimensional Hilbert space $H$, it is obvious that $\dec(x) = 1-\|x\|$ for $x \in B_H$. 

\begin{prop}
Let $H$ be a Hilbert space with $\dim(H) \geq 2$, and $x \in B_H$. Then, we have $\dec(x) = \sqrt{1-\|x\|^2}$.
\end{prop}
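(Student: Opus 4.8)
The plan is to work directly with the slice description of the $\Delta$-constant established above, namely
\[
\dec(x) = \inf_{\substack{S \subseteq B_H \text{ slice}\\ x \in S}} \sup_{y\in S}\|x-y\|,
\]
and to exploit the Euclidean geometry of $H$. Write $t = \|x\|$; I would focus on $0 < t < 1$, since $t=1$ gives a denting point and hence $\dec(x)=0$ by Proposition \ref{prop:denting_dec}, while $t=0$ is immediate because $\sup_{y\in S}\|y\|=1$ for every slice $S$ through the origin. By the Riesz representation theorem each slice of $B_H$ has the form $S(u,\delta)=\{y\in B_H:\langle y,u\rangle>1-\delta\}$ for some $u\in S_H$, and $x\in S(u,\delta)$ means $a:=\langle x,u\rangle>1-\delta$, where $a\le t$ by Cauchy--Schwarz. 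Decomposing orthogonally, $x=au+x^\perp$ with $\|x^\perp\|=\sqrt{t^2-a^2}$, and writing each competitor as $y=cu+y^\perp$, reduces every distance to the Pythagorean identity $\|x-y\|^2=(a-c)^2+\|x^\perp-y^\perp\|^2$.

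For the upper bound $\dec(x)\le\sqrt{1-t^2}$ I would test only the one-parameter family of slices in the direction $u=x/\|x\|$, for which $x^\perp=0$ and $a=t$. For $y=cu+y^\perp\in S(u,\delta)$ one has $c>1-\delta$ and $\|y^\perp\|^2\le 1-c^2$, so $\|x-y\|^2\le (t-c)^2+(1-c^2)=t^2+1-2tc$, which is decreasing in $c$ and therefore bounded above by its value at $c=1-\delta$, giving $\sup_{y\in S(u,\delta)}\|x-y\|^2\le (1-t)^2+2t\delta$. Letting $\delta\downarrow 1-t$ then yields $\dec(x)\le\sqrt{1-t^2}$.

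For the lower bound $\dec(x)\ge\sqrt{1-t^2}$ I would show that \emph{every} admissible slice already contains a far point. The key idea is a reflection that preserves the $u$-coordinate of $x$ but flips its orthogonal part to the antipodal boundary: set $y=au-\tfrac{\sqrt{1-a^2}}{\|x^\perp\|}\,x^\perp$ when $x^\perp\neq 0$, and $y=au+\sqrt{1-a^2}\,v$ for a unit $v\perp u$ when $x^\perp=0$ (the latter requiring $\dim(H)\ge 2$). Then $\|y\|=1$ and $\langle y,u\rangle=a>1-\delta$, so $y\in S(u,\delta)$, while the orthogonal parts add up to give $\|x-y\|=\sqrt{t^2-a^2}+\sqrt{1-a^2}$. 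Squaring and using $a\le t$ shows $\|x-y\|^2-(1-t^2)=2(t^2-a^2)+2\sqrt{(t^2-a^2)(1-a^2)}\ge 0$, which is exactly the desired estimate. Taking the infimum over slices gives $\dec(x)\ge\sqrt{1-t^2}$, and combining the two bounds completes the proof.

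The computations are short; the one step needing genuine thought is the lower bound, where a far point must be produced in an \emph{arbitrary} admissible slice rather than in a conveniently chosen one. The reflection trick makes membership in the slice automatic by preserving the $u$-coordinate of $x$ while simultaneously maximizing the orthogonal displacement, and the hypothesis $\dim(H)\ge 2$ enters precisely to guarantee room in $u^\perp$ when $x$ is parallel to $u$; this is exactly what separates the answer $\sqrt{1-\|x\|^2}$ from the one-dimensional value $1-\|x\|$.
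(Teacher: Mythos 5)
Your proof is correct. The upper bound is exactly the paper's: both of you test the slices determined by the functional $x/\|x\|$ and let the slice shrink, arriving at $\sup_{y\in S}\|x-y\|^2\to 1-\|x\|^2$. The lower bound is where you diverge. The paper picks a unit vector $z$ orthogonal to $x$ itself, fixes its sign so that $\langle z,y\rangle\ge 0$ for the slice functional $y$, and observes that $x+\sqrt{1-\|x\|^2}\,z$ lies on the sphere, stays in the slice, and is at distance \emph{exactly} $\sqrt{1-\|x\|^2}$ from $x$ --- one construction, no case split, no further inequality to check. You instead decompose $x=au+x^\perp$ relative to the slice direction $u$, reflect the orthogonal component out to the sphere, and land on a point at distance $\sqrt{t^2-a^2}+\sqrt{1-a^2}$, which you then verify is $\ge\sqrt{1-t^2}$ using $|a|\le t$. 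Both arguments are sound and use $\dim(H)\ge 2$ in the same essential way (to guarantee an orthogonal direction when needed); the paper's choice of perturbation direction is the more economical one, while yours carries the small bonus of showing that the infimum over admissible slices is attained only in the limit of slices aligned with $x/\|x\|$, since your far point is strictly farther than $\sqrt{1-t^2}$ whenever $a<t$.
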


\begin{proof}
Consider the slice $S=\{ u \in B_H : \big\langle u, \frac{x}{\|x\|} \big\rangle > \|x\| - \delta\}$ with $\delta >0$, where $\langle \cdot, \cdot \rangle$ denotes the inner product on $H$. It is clear that $x \in S$. For $y \in S$, observe that 
\begin{equation}\label{eq:inner}
\|x-y\|^2 = \|x\|^2+\|y\|^2 - 2\langle x , y \rangle \leq \|x\|^2 + 1 - 2 \|x\| (\|x\|-\delta). 
\end{equation}
By letting $\delta \rightarrow 0$, the right-hand side of \eqref{eq:inner} tends to $1-\|x\|^2$. This implies that $\dec(x) \leq \sqrt{1-\|x\|^2}$.
For the reverse inequality, let $S(y, \alpha)$ be a slice containing the point $x$. Since $\dim(H) \geq 2$, we can find $z \in S_H$ which is orthogonal to $x$, i.e., $\langle z,x\rangle = 0$. By changing the sign if necessary, we may assume that $\langle z, y \rangle \geq 0$. Note that $x+  \sqrt{1-\|x\|^2 }\, z \in S_H$ and $x+  \sqrt{1-\|x\|^2 }\,z \in S(y, \alpha)$ since 
\[
\langle y, x+  \sqrt{1-\|x\|^2 } \, z \rangle \geq \langle y, x\rangle > 1 - \alpha.
\]
Note from $\| x -( x+  \sqrt{1-\|x\|^2 }\,z) \| = \sqrt{1-\|x\|^2 }$ that $\dec(x) \geq \sqrt{1-\|x\|^2 }$. 
\end{proof}

\subsection{On global estimations of Daugavet and $\Delta$-constants}\label{subsec:global}

Recall from \cite{ALMP} that $X$ \emph{admits almost Daugavet points \textup{(resp.} almost $\Delta$-points\textup{)}} if for every $\eps>0$, there exists $x \in B_X$ such that $B_X \subseteq \overline{\co} \,\Delta_\eps(x)$ (resp. $x \in \overline{\co} \,\Delta_\eps(x)$). From Proposition \ref{prop:dc-characterization}, we have the following characterization, which shows that the behavior of the Daugavet constant of points in the \textit{unit sphere} $S_X$ completely determines whether $X$ admits almost Daugavet points or not.%, and the same for the $\Delta$-constant. 

\begin{prop}\label{prop:almost}
Let $X$ be a Banach space.
Then $X$ admits almost Daugavet points if and only if $\sup_{x \in S_X} \dc(x)=2$.
%\item[\textup{(b)}] $X$ admits almost $\Delta$-points if and only if $\sup_{x \in S_X} \dec(x)=2$.
\end{prop}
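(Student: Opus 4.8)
The plan is to deduce both implications directly from the geometric characterization in Proposition \ref{prop:dc-characterization}(a), using the elementary monotonicity $\Delta_{\eps'}(x) \subseteq \Delta_{\eps}(x)$ whenever $\eps' \leq \eps$ (since the defining constraint $\|y-x\| \geq 2-\eps'$ is then stronger). The only genuinely non-formal ingredient is the passage from a witness lying in the open ball to one on the sphere, handled via the $1$-Lipschitz continuity of $\dc$ from Remark \ref{rem:dc_Lipschitz}.

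For the easier implication, suppose $\sup_{x \in S_X} \dc(x) = 2$. Given $\eps > 0$, I would pick $x \in S_X$ with $\dc(x) > 2-\eps$. Taking $\alpha = \dc(x)$ in Proposition \ref{prop:dc-characterization}(a) yields $B_X \subseteq \overline{\co}\,\Delta_{2-\dc(x)+\eta}(x)$ for every $\eta > 0$. Since $2 - \dc(x) < \eps$, one can fix $\eta > 0$ small enough that $2-\dc(x)+\eta \leq \eps$, and then monotonicity gives $B_X \subseteq \overline{\co}\,\Delta_{2-\dc(x)+\eta}(x) \subseteq \overline{\co}\,\Delta_{\eps}(x)$. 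As $\eps > 0$ was arbitrary and $x \in B_X$, this is exactly the statement that $X$ admits almost Daugavet points.

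For the converse, suppose $X$ admits almost Daugavet points, and fix $\eps \in (0,1)$. There is $x_\eps \in B_X$ with $B_X \subseteq \overline{\co}\,\Delta_{\eps}(x_\eps)$. Setting $\alpha = 2-\eps$, so that $2-\alpha+\eta = \eps+\eta > \eps$, monotonicity gives $B_X \subseteq \overline{\co}\,\Delta_{\eps}(x_\eps) \subseteq \overline{\co}\,\Delta_{2-\alpha+\eta}(x_\eps)$ for every $\eta > 0$, whence Proposition \ref{prop:dc-characterization}(a) yields $\dc(x_\eps) \geq 2-\eps$. The point $x_\eps$ need not lie on $S_X$, which is the main obstacle to overcome. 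I would resolve it by noting that $\dc(x_\eps) \leq 1 + \|x_\eps\|$ forces $\|x_\eps\| \geq 1-\eps > 0$, so $z_\eps := x_\eps/\|x_\eps\| \in S_X$ is well defined and satisfies $\|z_\eps - x_\eps\| = 1-\|x_\eps\| \leq \eps$.

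Finally, the $1$-Lipschitz estimate of Remark \ref{rem:dc_Lipschitz} gives
\[
\dc(z_\eps) \geq \dc(x_\eps) - \|z_\eps - x_\eps\| \geq (2-\eps) - \eps = 2 - 2\eps,
\]
so $\sup_{x \in S_X} \dc(x) \geq 2 - 2\eps$ for all small $\eps > 0$. Letting $\eps \to 0$ and recalling $\dc(x) \leq 1 + \|x\| = 2$ for $x \in S_X$, I conclude $\sup_{x \in S_X} \dc(x) = 2$, completing the argument.
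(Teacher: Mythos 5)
Your proof is correct and follows essentially the same route as the paper: both directions go through Proposition \ref{prop:dc-characterization}(a), and the passage from the witness $x_\eps \in B_X$ to a point of $S_X$ is handled exactly as in the paper, by normalizing and invoking the $1$-Lipschitz property of $\dc$ from Remark \ref{rem:dc_Lipschitz}. The only difference is cosmetic (the paper starts from $\Delta_{\eps/2}$ to land on $2-\eps$ rather than $2-2\eps$), which does not affect the conclusion.
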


%\begin{lemma}\label{lem:2-eps-Daugavet}
%Given $\eps >0$, if $x \in B_X$ is a $(2-\eps)$-Daugavet point, then $\dc(x) \geq 2-\eps$. 
%\end{lemma}

%\begin{prop}\label{prop:stability-infty}
%Let $X$ and $Y$ be Banach space and $Z = X \oplus_\infty Y$. Then, we have
%$$
%\sup_{z \in S_Z} \dc(z) \leq \max \left\{ \sup_{x \in S_X} \dc(x), \sup_{y \in S_Y} \dc(y) \right\}.
%$$
%\end{prop}

\begin{proof}%[Proof of Theorem \ref{theorem:almost}]
The proof of the ``if" part is clear. Assume that $X$ admits almost Daugavet points. Let $\eps >0$ and $x_0 \in B_X$ be so that $B_X \subseteq \overline{\co} \,\Delta_{\eps/2} (x_0)$. By Proposition \ref{prop:dc-characterization}, it is clear that $\dc (x_0) \geq 2-\frac{\eps}{2}$.
Note that $\|x_0\| \geq 1 - \frac{\eps}{2}$. Thus, putting $z:=\frac{x_0}{\|x_0\|} \in S_X$, we have that $\|z - x_0\| < \frac{\eps}{2}$. Since the mapping $x \mapsto \dc (x)$ is $1$-Lipschitz (see Remark \ref{rem:dc_Lipschitz}), we have
\[
\dc (z) \geq \dc (x_0) - | \dc(z) - \dc(x_0)| \geq \dc (x_0) - \|z-x_0\| > \left(2-\frac{\eps}{2}\right) - \frac{\eps}{2} = 2 - \eps; 
\]
which implies $\sup_{x \in S_X} \dc(x) =2$. 
\end{proof}

Let us point out that it is unclear whether or not an analog result for almost $\Delta$-points holds in general. 
In fact, given a point $x \in B_X$ satisfying that $x \in \overline{\co} \, \Delta _\eps (x)$, we cannot expect the normalization $z= \|x\|^{-1} x$ of the point $x$ to satisfy $z \in \overline{\co} \, \Delta _\eps (z)$. Moreover, the mapping $x \in B_X \mapsto \dec (x)$ is not a Lipschitz map in general (see Remark \ref{rem:ell-infty_c0} for instance). 
Nevertheless, we note for certain Banach spaces that a result similar to Proposition \ref{prop:almost} holds. 

\begin{prop}\label{prop:almost2}
Let $X$ be a Banach space such that $X \myeq X \oplus_\infty \mathbb{R}$. Then $X$ admits almost $\Delta$-points if and only if $\sup_{x \in S_X} \dec (x) = 2$. 
\end{prop}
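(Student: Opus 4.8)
The plan is to first reduce the statement to an identity between two suprema of $\Delta$-constants. Since $\Delta_{\eps'}(x)\subseteq\Delta_{\eps''}(x)$ whenever $\eps'\le\eps''$, Proposition~\ref{prop:dc-characterization}(b) shows at once that $x\in\overline{\co}\,\Delta_\eps(x)$ for some $\eps>0$ forces $\dec(x)\ge 2-\eps$, and conversely $\dec(x)>2-\eps$ forces $x\in\overline{\co}\,\Delta_{\eps+\eta}(x)$ for all $\eta>0$. Hence $X$ admits almost $\Delta$-points if and only if $\sup_{x\in B_X}\dec(x)=2$. Because $\dec(x)\le 1+\|x\|\le 2$ for all $x\in B_X$ and trivially $\sup_{x\in S_X}\dec(x)\le\sup_{x\in B_X}\dec(x)$, the whole proposition follows once I prove $\sup_{x\in S_X}\dec(x)=\sup_{x\in B_X}\dec(x)$; granting this, the only nontrivial implication (almost $\Delta$-points $\Rightarrow$ supremum over $S_X$ equals $2$) is immediate, and the reverse is routine.

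The heart of the argument is a lifting estimate in $Y:=X\oplus_\infty\mathbb{R}$: for every $x\in B_X$ one has $(x,1)\in S_Y$ and $\dec_Y((x,1))\ge\dec_X(x)$. To see this, set $\alpha=\dec_X(x)$ and fix $\eps>0$; by Proposition~\ref{prop:dc-characterization}(b) there are $y_1,\dots,y_m\in B_X$ with $\|y_i-x\|\ge\alpha-\eps$ and weights $\lambda_i\ge 0$, $\sum_i\lambda_i=1$, satisfying $\|x-\sum_i\lambda_i y_i\|<\eps$. The crucial point is that appending the constant second coordinate $1$ costs nothing in the sup-norm: each $(y_i,1)$ lies in $B_Y$ and satisfies $\|(y_i,1)-(x,1)\|_Y=\|y_i-x\|\ge 2-(2-\alpha+\eps)$, so $(y_i,1)\in\Delta_{2-\alpha+\eps}^Y((x,1))$, while $\|\sum_i\lambda_i(y_i,1)-(x,1)\|_Y=\|\sum_i\lambda_i y_i-x\|<\eps$. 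Thus $(x,1)\in\overline{\co}\,\Delta_{2-\alpha+\eps}^Y((x,1))$ for every $\eps>0$, and Proposition~\ref{prop:dc-characterization}(b) applied in $Y$ gives $\dec_Y((x,1))\ge\alpha$, as claimed.

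Finally I would transfer this back to $X$. Since $\dec$ depends only on the metric geometry of the unit ball, it is preserved by surjective linear isometries; fixing an isometric isomorphism $\Phi\colon Y\to X$ furnished by the hypothesis $X\myeq X\oplus_\infty\mathbb{R}$, the point $z:=\Phi((x,1))$ lies in $S_X$ and satisfies $\dec(z)=\dec_Y((x,1))\ge\dec_X(x)$. Taking the supremum over $x\in B_X$ yields $\sup_{z\in S_X}\dec(z)\ge\sup_{x\in B_X}\dec(x)$, which together with the trivial reverse inequality completes the proof. I expect the only real obstacle to be the conceptual one of identifying the right point to lift: the normalization $x\mapsto x/\|x\|$ fails because $\dec$ is not Lipschitz, whereas attaching an extra coordinate equal to $1$ simultaneously moves the point onto $S_Y$ and preserves the distances to the far points witnessing $\dec_X(x)$, precisely because those points can be lifted with the same second coordinate.
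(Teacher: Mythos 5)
Your proof is correct and follows essentially the same route as the paper: the entire content in both is the lift $x \mapsto (x,1) \in S_{X\oplus_\infty\mathbb{R}}$ together with the observation that the constant second coordinate costs nothing in the sup-norm, so that $\dec((x,1)) \geq \dec(x)$, after which one transfers back through the isometry $X \cong X\oplus_\infty\mathbb{R}$. The only difference is that you verify the inequality $\dec((x,1)) \geq \dec(x)$ via the convex-hull characterization of Proposition \ref{prop:dc-characterization}(b) (lifting the witnessing convex combinations), whereas the paper argues directly with slices of $B_{X\oplus_\infty\mathbb{R}}$, splitting into the cases $x^*=0$ and $x^*\neq 0$; the two verifications are equivalent by that same proposition.
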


\begin{proof}
It suffices to show that for $x \in B_X$ there exists $z \in S_X$ such that $\dec(z) \geq \dec(x)$. Let $x \in B_X \setminus \{0\}$ be given. We claim that $\dec((x,1)) \geq \dec(x)$. Let $\eps>0$, $\delta>0$ and $(x^*, \beta) \in S_{X^* \oplus_1 \mathbb{R}}$ with $(x,1) \in S:=  S\bigl( (x^*, \beta), \delta\bigr)$ be given. Put $z:= (-\|x\|^{-1} x, 1 )$ and observe that $z \in S$. Also, $\|z-(x,1) \| = 1+\|x\| \geq \dec(x) > \dec(x) -\eps$. Next, assume that $x^* \neq 0$. Note that we have
$$
\frac{x^*}{\|x^*\|} (x) > \frac{1}{\|x^*\|}(1-\delta -\beta).
$$
Thus, there exists $y \in B_X$ such that $\|x^*\|^{-1} x^* (y) > \|x^*\|^{-1} (1-\delta -\beta)$ and $\|y-x\| > \dec(x) - \eps$. Set $z:=(y,1)$ and observe that $z \in S$ since $(x^*, \beta) (y,1) > 1-\delta-\beta+\beta= 1-\delta$. It is clear that
$$
\|z- (x,1)\| = \|y-x\| > \dec(x) -\eps.
$$
This proves that $\dec( (x,1)) \geq \dec(x)$. 
\end{proof} 

Recall from \cite{MPR2023} that a point $x \in B_X$ is called a \emph{super Daugavet point} if $\sup_{y \in V} \|x-y\| = 1+\|x\|$ for every nonempty relatively weakly open subset $V$ of $B_X$. Thus, it is also natural to consider the \emph{super Daugavet constant} of a point $x$ in $B_X$, denoted by $\sdc(x)$. That is, $\sdc(x)$ is the constant given by
$$
\sdc(x) := \sup \Set{ \alpha \geq 0 | \begin{array}{l} \forall \eps>0 \text{ and a relatively weakly open } V \subseteq B_X, \\
\exists \, y \in V \text{ such that } \|y-x\| > \alpha - \eps \end{array}}.
$$
By definition, it is clear that $\dc(x) \geq \sdc(x)$ for every $x \in B_X$.
Although we introduced the notion of super Daugavet constant, we will focus on Daugavet and $\Delta$-constants in the following sections. 

In \cite{RZ2018}, the Daugavet index of thickness of a Banach space $X$, denoted by $\mathcal{T}(X)$, is introduced. Namely, 
\[
\mathcal{T} (X) = \inf \Set{ r>0 | \begin{array}{c} \exists \, x \in S_X \text{ and a relatively weakly open } W \text{ of } B_X \\ 
\text{ such that } \O \neq W \subseteq B(x,r):=\{ u \in B_X : \|u-x\|\leq r \} \end{array}}.
\]
Afterwards, a related Daugavet index $\mathcal{T}^s (X)$ is introduced in \cite{HLLNR} as follows:
\[
\mathcal{T}^s (X) = \inf \Set{ r>0 | \begin{array}{c} \exists \, x \in S_X \text{ and a slice } S \text{ of } B_X \\ 
\text{ such that } S \subseteq B(x,r)\end{array}}.
\]
The following shows that those indexes are indeed the infimum of (super) Daugavet constant of points over the unit sphere. 

\begin{prop}\label{prop:thick}
Let $X$ be a Banach space. Then
\begin{enumerate}
\itemsep0.3em
\item[\textup{(a)}] $\inf_{x\in S_X} \dc (x) = \mathcal{T}^s (X)$. 
\item[\textup{(b)}] $\inf_{x\in S_X} \sdc(x) = \mathcal{T}(X)$. 
\end{enumerate}
\end{prop}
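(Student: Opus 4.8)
The plan is to reduce both statements to the radius-reformulations of $\dc$ and $\sdc$ that are already available, and then merely interchange the order of the infima. Recall that for $x\in B_X$ the excerpt established
$$
\dc(x)=\inf_{S\subseteq B_X\text{ slice}}\ \sup_{y\in S}\|x-y\|,
$$
and the identical computation—using nonempty relatively weakly open sets $V$ in place of slices $S$, starting from the definition of $\sdc$—yields
$$
\sdc(x)=\inf_{\substack{V\subseteq B_X\text{ rel. weakly open}}}\ \sup_{y\in V}\|x-y\|.
$$
So the first step I would record is this formula for $\sdc$, which follows word for word from the argument given for $\dc$.

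Next I would note the elementary identity that converts a supremum of distances into a smallest admissible radius: for any nonempty $A\subseteq B_X$ and any $x\in B_X$ one has $A\subseteq B(x,r)$ if and only if $\sup_{y\in A}\|x-y\|\le r$, and consequently
$$
\inf\{r>0 : A\subseteq B(x,r)\}=\sup_{y\in A}\|x-y\|.
$$
The one point to verify carefully here is the degenerate case $\sup_{y\in A}\|x-y\|=0$: then $A\subseteq B(x,r)$ for every $r>0$, so the infimum over $r>0$ is still $0$, which matches the supremum. This is the only mildly delicate bookkeeping in the whole proof.

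With these two ingredients, (a) is immediate. Writing the defining set of $\mathcal{T}^s(X)$ as a union over $x\in S_X$ and over slices $S$, and using that the infimum of a union of sets of reals is the infimum of the separate infima, I get
$$
\mathcal{T}^s(X)=\inf\{r>0:\exists\, x\in S_X,\ \exists\text{ slice }S,\ S\subseteq B(x,r)\}=\inf_{x\in S_X}\ \inf_{S}\ \inf\{r>0:S\subseteq B(x,r)\}.
$$
Substituting the radius identity turns the inner two infima into $\inf_{S}\sup_{y\in S}\|x-y\|=\dc(x)$, giving $\mathcal{T}^s(X)=\inf_{x\in S_X}\dc(x)$. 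For (b) I would run the same three lines verbatim, replacing every slice $S$ by a nonempty relatively weakly open $V$ and $\dc$ by $\sdc$; since $\mathcal{T}(X)$ is exactly $\mathcal{T}^s(X)$ with this replacement, nothing new is required. The main (and essentially only) obstacle is conceptual rather than computational: one must ensure the constraint $r>0$ in the indices does not clash with the possibility $\dc(x)=0$ (equivalently, $x$ lying at a denting point), which is precisely the degenerate case flagged above; once that is handled, both equalities are pure rearrangement of infima.
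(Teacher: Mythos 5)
Your argument is correct and is essentially the paper's proof in a different packaging: the paper verifies the same equality by a direct two-inequality check (if $\dc(x)<r$ for some $x\in S_X$ then $\mathcal{T}^s(X)\le r$, and conversely), which amounts to exactly the rearrangement of infima you perform via the radius identity and the formula $\dc(x)=\inf_{S}\sup_{y\in S}\|x-y\|$ already recorded in Section 2.1. The degenerate case $\sup_{y\in A}\|x-y\|=0$ and the passage to $\sdc$ with nonempty relatively weakly open sets are both handled correctly, so no gap remains.
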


\begin{proof}
As the statements (a) and (b) can be proved in a very similar way, we only prove (a). Suppose that $\dc(x)<r$ for some $x \in S_X$. Then there exist $\eps >0$ and $S=S(x^*,\delta)$ such that $\|x-y\| \leq r-\eps$ for every $y \in S$. This implies that $\mathcal{T}^s (X) \leq r$. Conversely, suppose that $\mathcal{T}^s (X)<r$. Let $\eps >0$ be such that $\mathcal{T}^s (X) < r- \eps <r$, and find $\mathcal{T}^s (X) < t < r-\eps$ such that there exist $x \in S_X$ and $S=S(x^*,\delta)$ satisfying $S \subseteq B(x,t)$. That is, $\|x-y\| \leq t < r-\eps$ for every $y \in S$. This implies that $\dc(x)<r$. 
\end{proof}  

\begin{remark}\label{remark:thick-delta}
It is also natural to think of the $\Delta$-version of the Daugavet index of thickness, which was once considered in \cite{HLLNR}. Given a Banach space $X$, let us define
\[
\mathcal{T}^\delta (X) :=  \inf \Set{ r>0 | \begin{array}{c} \exists \text{ a slice } S \text{ of } B_X \text{ and } x \in S \cap S_X \\ 
\text{ such that } S \subseteq B(x,r)\end{array}}. 
\]
Notice that $\mathcal{T}^s (X) \leq \mathcal{T}^\delta (X)$ and, arguing as in Proposition \ref{prop:thick}, observe the following: 
\[
\inf_{x\in S_X} \dec(x) = \mathcal{T}^\delta (X). 
\] 
\end{remark} 

One may ask whether it can happen that $\inf_{x \in S_X} \dc (x) = 0$ while $\dc (x) > 0$ for every $x \in S_X$, and a similar question for the $\Delta$-constant. The following shows that there is indeed such a Banach space $X$, and its construction relies on the fact \cite[Theorem 2.7]{HLLNR}.

\begin{theorem}
There exists a Banach space $X$ such that $\inf_{x \in S_X} \dec (x) = 0$ while $\dc (x) > 0$ for every $x \in S_X$. 
\end{theorem}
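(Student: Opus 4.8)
The plan is to separate the two demands and show they can coexist in a single space. By Remark \ref{remark:thick-delta} the condition $\inf_{x \in S_X} \dec(x) = 0$ is exactly $\mathcal{T}^\delta(X) = 0$; unwinding the definition, this amounts to producing points $x_n \in S_X$ together with slices $S_n \ni x_n$ whose diameters tend to $0$, since then $\dec(x_n) \leq \diam S_n \to 0$ by the slice description $\dec(x) = \inf_{S \ni x} \sup_{y \in S}\|x-y\|$ obtained right after Definition \ref{def:dc_and_dec}. Using the analogous formula $\dc(x) = \inf_{S \text{ slice}} \sup_{y \in S}\|x-y\|$, the requirement $\dc(x) > 0$ for every $x \in S_X$ says that for no $x$ can a sequence of slices $S_k$ satisfy $\sup_{y \in S_k}\|x-y\| \to 0$. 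Note that, since $\dc(x) \leq \dec(x)$ (see the inequalities following Definition \ref{def:dc_and_dec}), the first requirement already forces $\inf_{x} \dc(x) = 0$, i.e. $\mathcal{T}^s(X) = 0$ by Proposition \ref{prop:thick}(a) together with $\mathcal{T}^s \leq \mathcal{T}^\delta$; so we are asking for an infimum attained nowhere. The two demands therefore pull in opposite directions, and the heart of the matter is to keep the shrinking slices \emph{spread out}.

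Concretely, I would build $X$ from the ingredients of \cite[Theorem 2.7]{HLLNR}, arranging a uniformly separated sequence $(x_n) \subseteq S_X$ (say $\|x_n - x_m\| \geq \rho$ for $n \neq m$, with no norm-accumulation point) and slices $S_n = S(x_n^*, \delta_n) \ni x_n$ with $\diam S_n = d_n \to 0$, while every slice of $B_X$ that is not localized near some $x_n$ remains uniformly thick. Granting such a construction, the first assertion $\inf_{x}\dec(x) = 0$ is immediate from $\dec(x_n) \leq d_n \to 0$.

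The substantive part is to prove $\dc(x) > 0$ for an arbitrary fixed $x \in S_X$. I would argue by contradiction: if $\dc(x) = 0$, then by the slice formula there are slices $T_k$ with $\sup_{y \in T_k}\|x-y\| \to 0$, hence $T_k \subseteq B(x, \eps_k)$ with $\eps_k \to 0$ and in particular $\diam T_k \to 0$. The structural input from the construction is that every slice of sufficiently small diameter must be localized near one of the $x_n$, so each $T_k$ forces some $x_{n_k} \in B(x, \eps_k')$ with $\eps_k' \to 0$. This yields $x_{n_k} \to x$ in norm, contradicting the uniform separation and non-accumulation of $(x_n)$. Hence $\dc(x) > 0$; quantitatively one obtains a lower bound $\dc(x) \geq c(x) > 0$ depending on the distance from $x$ to the (at most one, by $\rho$-separation) nearby $x_n$ and on the uniform thickness of the remaining slices.

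The main obstacle is precisely this last step: producing a positive lower bound for $\dc(x)$ at \emph{every} individual point while the global infimum is $0$. The delicate case is a would-be norm-cluster point $x$ of the family $(x_n)$, at which the shrinking slices $S_{n_k}$ would accumulate and give $\dc(x) = 0$. Thus the construction must be engineered so that the near-denting points neither accumulate nor leave small slices behind away from themselves, and verifying this localization-and-separation property — that slices of small diameter occur only in small neighborhoods of the uniformly separated points $x_n$ — is where the real work lies.
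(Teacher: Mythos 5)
There is a genuine gap: you have correctly reduced the problem to producing a space with $\mathcal{T}^\delta(X)=0$ but $\dc(x)>0$ at every point, and you have correctly located the difficulty, but you do not resolve it. The construction is never specified beyond ``ingredients of \cite[Theorem 2.7]{HLLNR}'', and the crucial structural claim on which your argument for $\dc(x)>0$ rests --- that every slice of sufficiently small diameter must be localized near one of the uniformly separated points $x_n$ --- is stated as a desideratum rather than proved. As you yourself note, ``verifying this localization-and-separation property \dots is where the real work lies''; a proof cannot end where the real work begins. Moreover, that localization property is a strong global classification of all thin slices of $B_X$, and it is not clear that it holds for any natural candidate space, nor is it needed.

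The paper fills the gap quite differently, in a way that avoids classifying thin slices altogether. It takes $X=\bigl(\oplus_n X_n\bigr)_{\ell_1}$ where, by \cite[Proposition 2.8]{HLLNR}, $\mathcal{T}^s(X_n)=\mathcal{T}^\delta(X_n)=2^{-n}$. Then $\mathcal{T}^\delta(X)\leq \mathcal{T}^\delta(X_n)\to 0$ by the $\ell_1$-sum monotonicity (Proposition \ref{prop:ell_1-converse} plus Remark \ref{remark:thick-delta}), giving $\inf_{x\in S_X}\dec(x)=0$. For the pointwise lower bound on $\dc$, the argument splits into two cases using the $\ell_1$-structure rather than any localization of slices: if $x=(x_n)$ has $\|x_m\|=1$ for some $m$, then $\dc(x)\geq \dc(x_m)\geq \mathcal{T}^s(X_m)=2^{-m}$ by Proposition \ref{prop:ell1sum}; if instead $\|x_n\|<1$ for all $n$ (hence $\sup_n\|x_n\|<1$ since the norms sum to $1$), then \emph{every} slice of $B_X$ contains a unit vector supported on a single coordinate $k$, and the $\ell_1$-distance from $x$ to any such vector is at least $(1-\|x_k\|)+(1-\|x_k\|)\geq 2-2\sup_n\|x_n\|>0$. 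This second case is precisely the point your sketch leaves open: instead of showing that thin slices only occur near the special points, one shows directly that every slice contains a point uniformly far from the given $x$. If you want to pursue your route, you would need to actually build a single space with the separation-plus-localization property and prove it, which is substantially harder than the $\ell_1$-sum argument.
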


\begin{proof}
The result \cite[Proposition 2.8]{HLLNR} guarantees that for any $n \in \mathbb{N}$, there exists a Banach space $X_n$ satisfying that $\mathcal{T}^s (X_n) = 1/2^n$. As a matter of fact, a careful investigation of the proof shows that $\mathcal{T}^s (X_n)$ in fact coincides with $\mathcal{T}^\delta (X_n)$. Observe also that $\mathcal{T}^\delta (E_1 \oplus_1 E_2)\leq \mathcal{T}^\delta (E_1)$ for any Banach spaces $E_1$ and $E_2$, which is a consequence of Proposition \ref{prop:ell_1-converse} in Section \ref{sec:stability} and Remark \ref{remark:thick-delta}.

Now, set $X = \bigl(\oplus_n X_n\bigr)_{\ell_1}$. Then we have that 
\[
0\leq \mathcal{T}^s (X) \leq \mathcal{T}^\delta (X) \leq \min \{ \mathcal{T}^\delta (X_n) , \mathcal{T}^\delta (Y_n)\} \leq \frac{1}{2^n}, 
\]
where $Y_n = \bigl(\oplus_{k \neq n} Y_k\bigr)_{\ell_1}$ for each $n \in \N$. This shows that $\mathcal{T}^\delta (X) =0$. 

Next, we claim that $\dc(x)>0$ for every $x \in S_X$. Let $x= (x_n) \in S_X$ be given. If there exists $m \in \N$ such that $\|x_m\| = 1$, then we have 
\[
x = (0,\ldots, 0, x_{m}, 0, \ldots ) =: (x_m, 0) \in X_m \oplus_1 Y_m. 
\]
Note that $\dc(x)=\dc(x_m, 0) \geq \dc(x_m) \geq 2^{-m}$ (see Proposition \ref{prop:ell1sum} for its justification). 
So, suppose that $\|x_n\| < 1$ for every $n \in \N$. Let $y^*= (y_n^*) \in S_{X^*}$, $\eps>0$, $\delta >0$ be given. Find $k \in \N$ so that $\|y_k^* \| > 1- \delta$; so there is $y_k \in S_{X_{k}}$ such that $y_k^* (y_k)>1-\delta$. Letting $\iota_k$ be the embedding from $X_k$ into $X$, observe that $\iota_k (y_k) \in S(y^*, \delta)$ and
\[
\| \iota_k (y_k) - x \| = \|y_k - x_k\| + \sum_{n\neq k } \|x_n\| \geq 1 - \|x_k\| + (1- \|x_k\|) \geq 2 -2 \sup_{n \in \N} \|x_n\|. 
\]
This implies that $\dc (x) \geq 2 - 2 \sup_{n \in \N} \|x_n\| >0$. 
\end{proof} 

\section{Daugavet and $\Delta$-constants of points in some classical spaces}\label{sec:classical}

\subsection{On the Banach space $c_0$}

The following example shows that even a Banach space with the Radon-Nikod\'ym property (indeed, a finite dimensional Banach space) may have only finitely many elements whose Daugavet constant is zero.

\begin{prop}\label{prop:ell-infty-n}
Let $N \in \mathbb{N}$. For every $x = (x_n)_{n=1}^N \in B_{\ell_\infty^N}$, we have 
$$
\dc(x) = \max \{1 - |x_n| : n = 1,\ldots, N\}. 
$$
In particular, when $N=2$ we have 
\[
\dec(x) = \dc(x) = \max \{1 - |x_1|, 1-|x_2| \}. 
\]
\end{prop}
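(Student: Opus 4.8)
The plan is to rely on the slice descriptions $\dc(x)=\inf_{S}\sup_{y\in S}\|x-y\|$ and $\dec(x)=\inf_{S\ni x}\sup_{y\in S}\|x-y\|$ recorded right after Definition~\ref{def:dc_and_dec}, together with the observation that the coordinate sign-flips are isometries of $\ell_\infty^N$ fixing each $|x_n|$ and hence leaving $\dc$ and $\dec$ unchanged. Using this I would first reduce to $x=(x_n)\in[0,1]^N$ and fix an index $m$ with $x_m=\min_n x_n$, so that the target value is $\max_n(1-|x_n|)=1-x_m$. I would also use that the dual is $\ell_1^N$, so a slice has the form $S(y^*,\delta)$ with $\|y^*\|_1=1$, and that the extreme points of $B_{\ell_\infty^N}$ are the sign vectors.

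For the lower bound $\dc(x)\ge 1-x_m$ I would show every slice already contains a far extreme point. Given $S(y^*,\delta)$, take $e\in\{-1,1\}^N$ with $e_n=\sgn(y_n^*)$ whenever $y_n^*\ne0$ and $e_n=-1$ otherwise; then $y^*(e)=\sum_n|y_n^*|=1$, so $e\in S(y^*,\delta)$. Inspecting coordinate $m$ gives $|x_m-e_m|\ge 1-x_m$ (the reverse triangle inequality when $e_m=\pm1$, and the value $1+x_m$ when $y_m^*=0$), whence $\sup_{y\in S}\|x-y\|_\infty\ge 1-x_m$ for every slice, i.e.\ $\dc(x)\ge 1-x_m$.

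For the matching upper bound I would exhibit one efficient slice, namely $S_\delta:=S\bigl(\tfrac1N(1,\dots,1),\delta\bigr)$. Since every coordinate of a point of $B_{\ell_\infty^N}$ is at most $1$, membership in $S_\delta$ forces $y_n>1-N\delta$ for all $n$, and a coordinatewise estimate then gives $|x_n-y_n|\le\max(1-x_n,N\delta)\le\max(1-x_m,N\delta)$. Hence $\sup_{y\in S_\delta}\|x-y\|_\infty\le\max(1-x_m,N\delta)\to 1-x_m$ as $\delta\to0$, so $\dc(x)\le 1-x_m$ and the formula for $\dc$ follows.

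It remains to prove $\dec(x)=\dc(x)$ when $N=2$; since $\dec\ge\dc$ always, only $\dec(x)\le 1-x_1$ (after arranging $0\le x_1\le x_2$) needs proof, and this is where the main subtlety lies. A thin corner slice as above generally fails to contain $x$ once $x$ is interior, so I would instead use the balanced—and possibly ``fat''—slice through $x$ in the diagonal direction, $S=\{y:y_1+y_2>x_1+x_2-2\eta\}$, which contains $x$ by construction. For $y\in S$, combining the upper bounds $y_i\le1$ with the lower bounds $y_1\ge(x_1+x_2-2\eta)-y_2\ge x_1+x_2-1-2\eta$ (and symmetrically in the other coordinate) yields $|x_i-y_i|\le 1-x_1+2\eta$ for $i=1,2$; letting $\eta\to0$ gives $\dec(x)\le 1-x_1=\dc(x)$. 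The crux is precisely this last step: one must recognize that for $N=2$ a single fat diagonal slice controls both coordinates simultaneously, a phenomenon that breaks down for $N\ge3$ and thereby explains the restriction to $N=2$.
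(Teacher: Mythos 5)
Your proof is correct and follows essentially the same route as the paper: the sign vector of the defining functional witnesses the lower bound in every slice, the uniform functional $\frac{1}{N}(1,\dots,1)$ gives the matching upper bound for $\dc$, and the diagonal slice through $x$ handles $\dec$ when $N=2$. The only differences are cosmetic — you argue directly where the paper argues by contradiction, and you treat the $\sgn(y_n^*)=0$ coordinates more explicitly than the paper does.
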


\begin{proof}
Let $x = (x_n)_{n=1}^N \in B_{\ell_\infty^N}$ be given. 
Let $\eps>0, \delta>0$ and $y^*=(y_n^*)_{n=1}^N \in S_{\ell_1^N}$ be given.
If we let $y:=(\sgn (y_n^*) )_{n=1}^N \in S_{\ell_\infty^N}$, then $y^*(y)=1$; hence $y \in S(y^*,\delta)$. 
Notice that 
\[
|x_n - \sgn (y_n^*)| \geq 1 - |x_n|.
\]
Thus, $\| x- y\| \geq \max \{ 1-|x_n| : n=1,\ldots, N\}$. This argument shows that $\dc (x) \geq \max \{ 1-|x_n| : n=1,\ldots, N\}$. 

Next, assume to the contrary that $\dc (x) > \max \{ 1-|x_n| : n=1,\ldots, N\}$. Without loss of generality, we may assume that $x_i \geq 0$ for every $i=1,\ldots, N$. 
Then we can find $\eta >0$ such that $x_i > 1 - \eta > 1 - \dc(x)$ for every $i =1,\ldots, N$. Let $\delta>0$ be sufficiently small so that $\delta < N^{-1} \eta$, and $y^* = N^{-1} (1,\ldots, 1) \in S_{\ell_1^N}$. If $y = (y_1,\ldots, y_N) \in S(y^*, \delta)$, then by a standard argument we have 
\[
y_i > 1 - N \delta > 1- \eta \quad \text{for } i =1,\ldots, N.
\]
It follows that $|x_i - y_i| <\eta$ for every $i=1,\ldots, N$, that is, $\|x-y\| < \eta$. This implies that $\dc(x) \leq \eta < \dc(x)$, which is a contradiction. 

Finally, when $N =2$, it suffices to show that $\dec(x) \leq \max \{ 1 -|x_1|, 1-|x_2|\}$. Without loss of generality, we may assume that $x_1 \geq x_2 \geq 0$. Thus, $\max \{ 1 -|x_1|, 1-|x_2|\} = 1-x_2$.
Again, assume to the contrary that $\dec(x) > 1-x_2$. Find $\eta >0$ so that $x_2 > 1 - \eta > 1 - \dec(x)$.  
If $y^* = (1/2, 1/2) \in S_{\ell_1^2}$, then 
\[
y^*(x) = \frac{x_1+x_2}{2} > \frac{x_1}{2} + \frac{1-\eta}{2} =: 1 - \delta.
\]
Observe that if $y=(y_1,y_2) \in S(y^*, \delta)$, then 
\[
y_1 + y_2 > x_1 + (1-\eta). 
\]
This implies that 
\[
x_1 -y_1 < \eta \, \text{ and } \, x_2 -y_2 \leq x_1 - y_2 < \eta. 
\] 
If $y_1 > x_1$, then $|x_1-y_1| = y_1 - x_1 \leq 1 -x_1 < \eta$. Also, if $y_1 \leq x_1$, then $|x_1 - y_1| = x_1 - y_1 < \eta$; hence $|x_1 - y_1|<\eta$ in any case. Similarly, $|x_2 - y_2| < \eta$, therefore $\|x-y\|< \eta$. Since $y \in S(y^*, \delta)$ is chosen arbitrarily, we obtain that $\|x-y\| < \eta$ which contradicts that $\dec(x) > \eta$.
\end{proof}

It should be noted that computing the precise value of the $\Delta$-constant of points in $\ell_{\infty}^N$ is significantly more difficult, even for ${N}=3$.
 In particular, it is not true in general for $N \geq 3$ that $\dec(x) = \max \{1-|x_n|: n = 1,...,N\}$ when $x \in B_{\ell_\infty^N}$ (see Example \ref{rem:ell-infty_c0} and Figure \ref{figure_dc}).

As an infinite dimensional version of Proposition \ref{prop:ell-infty-n}, we shall see that the Daugavet constant of a point $x$ in the space $c_0$ is precisely computed as follows, which can be considered as an improvement of \cite[Lemma 6.5]{ALMP}.

\begin{prop}\label{prop:dc-c_0}
For every $x \in B_{c_0}$, we have $\dc(x)=1$.
\end{prop}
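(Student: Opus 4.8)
The plan is to prove $\dc(x)=1$ by establishing the two inequalities $\dc(x)\ge 1$ and $\dc(x)\le 1$ separately, using throughout the reformulation $\dc(x)=\inf_{S}\sup_{y\in S}\|x-y\|$ (the infimum running over all slices $S$ of $B_{c_0}$) together with the identification $c_0^*=\ell_1$. The case $x=0$ is already settled by the general bound $1-\|x\|\le\dc(x)\le 1+\|x\|$, so I may assume $x\neq 0$.

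For the lower bound $\dc(x)\ge 1$, I would fix an arbitrary slice $S=S(y^*,\delta)$ with $y^*=(y_n^*)\in S_{\ell_1}$ and produce a single point $y\in S$ with $\|x-y\|\ge 1$. Since $\sum_n|y_n^*|=1$, I choose $N$ with $\sum_{n\le N}|y_n^*|>1-\delta/3$, so that every individual tail coordinate $k>N$ satisfies $|y_k^*|<\delta/3$. Setting $u=\sum_{n\le N}\sgn(y_n^*)e_n$ and adding one far-out coordinate, $v=u-\sgn(x_k)e_k$ for some $k>N$ (with the convention $\sgn 0=1$), I get $\|v\|=1$ and $y^*(v)\ge(1-\delta/3)-|y_k^*|>1-\delta$, hence $v\in S$, while $|x_k-v_k|=|x_k|+1\ge 1$ forces $\|x-v\|\ge 1$. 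As the slice was arbitrary, this yields $\dc(x)\ge 1$.

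For the upper bound $\dc(x)\le 1$, I would exhibit, for each small $\eps>0$, a slice on which $\sup_{y}\|x-y\|\le 1+\eps$. The decisive use of $x\in c_0$ is that the set $F=\{n:|x_n|\ge\eps\}$ is finite and, for $\eps<\|x\|$, nonempty. I take the sign-aligned functional $y^*=\frac{1}{|F|}\sum_{n\in F}\sgn(x_n)e_n^*\in S_{\ell_1}$ and the slice $S(y^*,\delta)$ with $\delta$ chosen so that $|F|\delta<\eps$. An averaging argument then shows that membership $y\in S(y^*,\delta)$ forces $\sgn(x_n)y_n>1-|F|\delta$ for \emph{each} $n\in F$ individually (if one term dropped below $1-|F|\delta$, the constrained sum could not exceed $|F|(1-\delta)$), so each such $y_n$ is pinned near $\sgn(x_n)$ and $|x_n-y_n|<1$ on $F$; on the remaining coordinates $|x_n-y_n|\le|x_n|+|y_n|<\eps+1$ automatically. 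Taking the supremum over all coordinates gives $\|x-y\|\le 1+\eps$ for every $y\in S$, whence $\dc(x)\le 1+\eps$, and letting $\eps\to 0$ completes the argument.

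I expect the upper bound to be the main obstacle. The lower bound needs only to plant one badly-placed coordinate inside an arbitrary slice, whereas the upper bound requires a slice that simultaneously controls \emph{all} coordinates of $x-y$; this is precisely why one must align the functional with the signs of the finitely many large coordinates of $x$ and invoke the averaging step to pin each of them down, leaving the small coordinates to be handled by the crude triangle-inequality estimate. This construction also explains the value $1$: it is the limit of $\max_n(1-|x_n|)$ from the finite-dimensional formula of Proposition~\ref{prop:ell-infty-n}, since $\inf_n|x_n|=0$ in $c_0$.
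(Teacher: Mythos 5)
Your proof is correct and follows essentially the same route as the paper: the lower bound by planting one badly-placed coordinate beyond the essential support of $y^*$ inside an arbitrary slice, and the upper bound via the sign-aligned averaged functional $\frac{1}{|F|}\sum_{n\in F}\sgn(x_n)e_n^*$ over the finitely many large coordinates of $x$, whose averaging property pins down each $y_n$ for $n\in F$. The only differences are organizational: the paper reduces the lower bound to finitely supported vectors via the $1$-Lipschitz continuity of $\dc$ and runs the upper bound as a proof by contradiction, whereas you argue both directly.
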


\begin{proof}
We first show that $\dc(x) \geq 1$ for every $x \in B_{c_0}$. Observe that it suffices to prove that $\dc(x) \geq 1$ for every $x \in B_{c_{00}}$. Fix $x \in B_{c_{00}}$ and we may write $x = (x_1, \ldots, x_m, 0, \ldots)$ for some $m \in \N$. Let $\eps>0$, $\delta>0$ and $y^* = (y_n^*) \in S_{\ell_1}$ be given. Pick any $y = (y_n) \in S(y^*,\delta/3) \cap S_{c_0}$ and choose $k > m$ such that $|y_k^*| < \delta/3$. If we define $z = (z_n) \in S_{c_0}$ by $z_n = y_n$ for each $n \in \N \setminus \{k\}$ and $z_k = 1$, then
\begin{align*}
y^*(z) \geq \sum_{i \neq k} y_i^*y_i - \frac{\delta}{3} \geq y^*(y) - \frac{2\delta}{3} > 1- \delta,
\end{align*}
which yields $z \in S(y^*,\delta)$. The conclusion follows from $\|z-x\| \geq |z_k-x_k| =1$.

Now, suppose that $\dc(x_0) > 1$ for some $x_0 = (x_n) \in B_{c_0}$. Let $A \subseteq \N$ be the finite subset of all indices such that $|x_i| \geq (\dc(x_0) - 1)/4$ for each $i \in A$, where $A$ is nonempty since $\|x_0\| \geq \dc(x_0) - 1$. For 
\[
\eps_0 = \frac{\dc(x_0)-1}{2}, \,\delta_0 = \frac{\dc(x_0)+1}{2|A|}\, \text{ and }\, y_0^* = \frac{1}{|A|} \sum_{i \in A} \sgn (x_i) e_i^* \in S_{\ell_1},
\]
there must exist $y_0 = (y_n) \in S(y_0^*,\delta_0)$ such that
$$
\|y_0-x_0\| > \dc(x_0) - \eps_0 = \frac{1}{2} \left( \dc(x_0) + 1 \right). 
$$
However, observe that for each $i \notin A$, we have
$$
|y_i - x_i| \leq \frac{1}{4} \left(\dc(x_0) -1 \right) + 1 < \frac{1}{2} \left( \dc(x_0) + 1 \right).
$$
It follows that there must exist $k \in A$ such that 
\[
|y_k - x_k | = \| y_0 - x_0\| > \frac{1}{2} \left( \dc(x_0) + 1 \right) > 1.  
\]
This, in particular, implies that $\sgn (y_k) \neq \sgn (x_k)$. 
Moreover, 
\[
|y_k| > \frac{1}{2} (\dc( x_0) - 1). 
\]
Thus we have
\begin{align*}
y_0^*(y_0) = \frac{1}{|A|} \sum_{i \in A} \sgn (x_i) y_i &= \frac{1}{|A|} \sum_{i \in A \setminus \{k\}} \sgn (x_i) y_i - \frac{1}{|A|} |y_k|  \\
&\leq \frac{|A|-1}{|A|} - \left( \frac{\dc(x_0)-1}{2|A|}\right) = 1 - \left( \frac{\dc(x_0)+1}{2|A|}\right). 
\end{align*} 
This contradicts that $y_0 \in S(y_0^*, \delta_0)$. 
\end{proof}

%For the case of $\Delta$-constant on $c_0$, we have a different situation here.

Recall from Proposition \ref{prop:almost2} and \cite[Lemma 6.4]{ALMP} that $\sup_{x \in S_{c_0}} \dec(x) = 2$. This, compared with Proposition \ref{prop:dc-c_0}, shows that the Daugavet constant and the $\Delta$-constant of a point behave totally differently. We present an estimation of the $\Delta$-constant for points in $c_0$. 

\begin{theorem}\label{theorem:dec-c_0}
Let $x = (x_n) \in B_{c_0}$. Define $f_n: [0,1] \to \R$ for each $n \geq 3$ by
$$
f_n(t) = \begin{cases}
1+|t| &\text{ if }\, 0 \leq |t| \leq 1 - \dfrac{2}{n} \\
(n-1)(1-|t|) &\text{ if }\, 1 - \dfrac{2}{n} \leq |t| \leq 1.
\end{cases}
$$
Then, we have
$$
\dec (x) \geq \min \{ f_n(x_{i_1}), \ldots, f_n(x_{i_n}) \}
$$
for every $n \geq 3$ and for every set of indices $\{i_1, \ldots, i_n\} \subseteq \mathbb{N}$.
\end{theorem}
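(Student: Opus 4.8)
The plan is to work from the slice description of the $\Delta$-constant obtained just after Definition~\ref{def:dc_and_dec}, namely $\dec(x)=\inf_{S\ni x}\sup_{y\in S}\|x-y\|$; equivalently, I would fix $\eps,\delta>0$ and $y^*=(y^*_m)\in S_{\ell_1}$ with $x\in S(y^*,\delta)$, and then produce a single $y\in S(y^*,\delta)$ with $\|y-x\|>\min_j f_n(x_{i_j})-\eps$. Since simultaneously flipping the sign of a coordinate in $x$, in $y^*$, and in every competitor changes none of the relevant quantities, I may assume $x_{i_j}\geq 0$ for all $j$; I write $t_j:=x_{i_j}\in[0,1]$ and $a_j:=y^*_{i_j}$. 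The one structural fact I would use repeatedly is the \emph{budget inequality} $\sum_m |y^*_m|(1-|x_m|)<\delta$, which is immediate from $1-\delta<y^*(x)\leq\sum_m|y^*_m|\,|x_m|$. Finally, because $f_n(x_{i_{j_0}})\geq\min_j f_n(x_{i_j})$ for any fixed $j_0$, it is enough to realize $f_n$ \emph{at a single, well-chosen coordinate}, so the whole argument reduces to a good choice of that coordinate plus one explicit construction.

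\smallskip\noindent\textbf{Easy case.} If some $a_{j}\leq 0$, I would take $y$ equal to $x$ except for $y_{i_{j}}=-1$. Then $y\in B_{c_0}$ and $y^*(y)=y^*(x)-a_{j}(1+t_{j})\geq y^*(x)>1-\delta$, so $y\in S(y^*,\delta)$, while $\|y-x\|=1+t_{j}$. Since $1+t\geq f_n(t)$ for every $t\in[0,1]$ (equality for $t\leq 1-2/n$, and $1+t\geq 2-2/n\geq(n-1)(1-t)$ for $t\geq 1-2/n$), this already gives $\|y-x\|\geq f_n(t_j)\geq\min_j f_n(t_j)$.

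\smallskip\noindent\textbf{Main case.} Suppose instead $a_j>0$ for every $j$. Restricting the budget inequality to the $n$ indices $i_1,\dots,i_n$ yields $\sum_{j=1}^n a_j(1-t_j)<\delta$, so by pigeonhole there is a $j_0$ with $a_{j_0}(1-t_{j_0})<\delta/n$, i.e. $\delta/a_{j_0}>n(1-t_{j_0})$. I would then build $y$ by pushing the single coordinate $i_{j_0}$ downward while spending all remaining mass of $y^*$ to stay inside the slice: choose a small $\delta'>0$ and a finite $F\ni i_{j_0}$ with $\sum_{m\notin F}|y^*_m|<\delta'$, and set $y_{i_{j_0}}=s:=\max\{-1,\,1-(\delta-\delta')/a_{j_0}\}$, $y_m=\sgn(y^*_m)$ for $m\in F\setminus\{i_{j_0}\}$, and $y_m=0$ otherwise. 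The truncation keeps $y$ finitely supported (hence in $c_0$) with $\|y\|_\infty\leq 1$, and a short check gives $y^*(y)>a_{j_0}s+(1-a_{j_0})-\delta'\geq 1-\delta$ in both branches of the maximum, so $y\in S(y^*,\delta)$. It then remains to estimate $\|y-x\|\geq|s-t_{j_0}|=t_{j_0}-s$ (here $s<t_{j_0}$, as $\delta-\delta'>a_{j_0}(1-t_{j_0})$ for $\delta'$ small). If $s=-1$ then $\|y-x\|\geq 1+t_{j_0}\geq f_n(t_{j_0})$; otherwise $\|y-x\|\geq t_{j_0}-1+(\delta-\delta')/a_{j_0}$, which, letting $\delta'\to0$, exceeds $t_{j_0}-1+n(1-t_{j_0})=(n-1)(1-t_{j_0})$. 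For $t_{j_0}\geq 1-2/n$ this is exactly $f_n(t_{j_0})$; for $t_{j_0}<1-2/n$ one has $\delta/a_{j_0}>n(1-t_{j_0})>2$, which forces $s=-1$ and hence $\|y-x\|\geq 1+t_{j_0}=f_n(t_{j_0})$. In every subcase $\|y-x\|>f_n(t_{j_0})-\eps\geq\min_j f_n(x_{i_j})-\eps$, as required.

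\smallskip The genuinely routine parts are the coordinatewise sign normalization, the $c_0$-truncation, and the $\eps/\delta'$ bookkeeping. The step I expect to carry the argument—and the one that is not obvious—is the choice of exploited coordinate by pigeonhole applied to the \emph{products} $a_j(1-t_j)$ rather than to the weights $a_j$ alone: sharing the slice budget $\delta$ among the $n$ coordinates forces $\delta/a_{j_0}>n(1-t_{j_0})$, and it is precisely this factor $n$ that converts a downward push of one coordinate into a displacement of size $(n-1)(1-t_{j_0})$, thereby producing the nonobvious slope $n-1$ appearing in the second branch of $f_n$.
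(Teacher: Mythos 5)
Your argument is correct, but it takes a genuinely different route from the paper. The paper proves the bound by exhibiting an explicit convex combination: it writes $x=\frac{1}{n_0}\sum_{k=1}^{n_0}w_k$ with each $w_k\in B_{c_0}$ satisfying $\|w_k-x\|\geq f_{n_0}(x_k)$ (the coordinate $i_k$ of $w_k$ is pushed to $-1$ or to $n_0x_{i_k}-(n_0-1)$ according to which branch of $f_{n_0}$ applies, and the remaining designated coordinates are inflated to $\frac{n_0x_{i_j}+1}{n_0-1}$ or to $1$ so that the average returns $x$), and then invokes Proposition \ref{prop:dc-characterization}(b). You instead work directly with slices: the budget inequality $\sum_m|y_m^*|(1-|x_m|)<\delta$ plus the pigeonhole on the products $a_j(1-t_j)$ locates a coordinate that can be pushed down by $(\delta-\delta')/a_{j_0}>n(1-t_{j_0})-o(1)$ while the rest of the mass of $y^*$ is saturated to stay in the slice. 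The two arguments are Hahn--Banach duals of one another, and the equal weights $\frac{1}{n_0}$ in the paper's convex combination correspond exactly to the factor $n$ your pigeonhole extracts; in both cases this is the source of the slope $n-1$ in the second branch of $f_n$. The paper's version is quicker to verify once the $w_k$ are written down, whereas yours avoids having to guess the combination, explains where the constant comes from, and yields the slightly stronger local information that \emph{every} slice containing $x$ meets $\Delta_{2-\min_jf_n(x_{i_j})+\eps}(x)$ at an explicitly constructed finitely supported point. I checked the details (the sign normalization, the two branches of $s=\max\{-1,\,1-(\delta-\delta')/a_{j_0}\}$, the membership $y\in S(y^*,\delta)$, and the observation that $t_{j_0}<1-2/n$ forces $s=-1$ for small $\delta'$) and found no gaps.
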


\begin{proof}
We may assume $x_n \geq 0$ for each $n \in \N$ for convenience. Let $n_0 \geq 3$ and $\{i_1, \ldots, i_{n_0}\} \subseteq \mathbb{N}$ be given. For simplicity, let us say for some $n_1 \leq n_0$ that
\begin{itemize}
\itemsep0.3em
\item $i_n = n$ for $n =1, \ldots, {n_0}$;
\item $0 < x_n \leq 1 -\frac{2}{n_0}$ for $n = 1,\ldots,n_1$; 
\item $1 -\frac{2}{n_0} < x_n \leq 1$ for $n = n_1+1, \ldots, n_0$. 
\end{itemize}
Define $w_1, \ldots, w_{n_0} \in c_0$ by \tiny
\begin{align*}
w_1 &= \left( \phantom{....} -1\phantom{....}, \frac{n_0 x_{2} + 1}{n_0 - 1}, \cdots, \frac{n_0 x_{n_1} + 1}{n_0 - 1} , \phantom{...............}1\phantom{...............} , \phantom{..} 1 \phantom{..}, \cdots, \phantom{w..........}1\phantom{w..........} , x_{n_0 + 1}, x_{n_0 + 2}, \cdots \right), \\
w_2 &= \left( \frac{n_0 x_1 + 1}{n_0-1}, \phantom{....} -1 \phantom{....}, \cdots, \frac{n_0 x_{n_1} + 1}{n_0 - 1} , \phantom{...............}1\phantom{...............} , \phantom{..} 1 \phantom{..}, \cdots, \phantom{w..........}1\phantom{w..........} , x_{n_0 + 1}, x_{n_0 + 2}, \cdots \right), \\
&\phantom{..............}\vdots \phantom{............} \vdots \phantom{w.........} \ddots \phantom{..........} \vdots \phantom{.........................} \vdots \phantom{...................} \vdots \phantom{....} \ddots \phantom{...............} \vdots \\
w_{n_1} &= \left( \frac{n_0 x_1 + 1}{n_0-1}, \frac{n_0 x_{2} + 1}{n_0 - 1},  \cdots, \phantom{.....} -1 \phantom{.....}, \phantom{...............}1\phantom{...............}, \phantom{..} 1 \phantom{..}, \cdots, \phantom{w..........}1\phantom{w..........},  x_{n_0 + 1}, x_{n_0 + 2}, \cdots \right), \\
w_{n_1+1} &= \left( \frac{n_0 x_1 + 1}{n_0-1}, \frac{n_0 x_{2} + 1}{n_0 - 1}, \cdots, \frac{n_0 x_{n_1} + 1}{n_0 - 1} , n_0x_{n_1+1} - (n_0-1) , \phantom{..}1\phantom{..} , \cdots, \phantom{w..........}1\phantom{w..........} , x_{n_0 + 1}, x_{n_0 + 2}, \cdots \right), \\
&\phantom{..............}\vdots \phantom{............} \vdots \phantom{w.........} \ddots \phantom{..........} \vdots \phantom{.........................} \vdots \phantom{...................} \vdots \phantom{....} \ddots \phantom{...............} \vdots \\
w_{n_0} &= \left( \frac{n_0 x_1 + 1}{n_0-1}, \frac{n_0 x_{2} + 1}{n_0 - 1}, \cdots, \frac{n_0 x_{n_1} + 1}{n_0 - 1} , \phantom{w............}1\phantom{w............} , \phantom{..}1\phantom{..}, \cdots, n_0x_{n_0} - (n_0-1) , x_{n_0 + 1}, x_{n_0 + 2}, \cdots \right).
\end{align*}\normalsize
Then, we have $\frac{1}{n_0} \sum_{n=1}^{n_0} w_n = x$ and $\|w_n-x\| \geq f_{n_0}(x_n)$ for $n = 1,\ldots,n_0$. Indeed, one can see that $\|w_n-x\| \geq 1 + x_n$ for $n=1,\ldots,n_1$ and $\|w_n-x\| \geq (n_0-1)(1-x_n)$ for $n = n_1+1,\ldots,n_0$. This shows that $x \in \operatorname{co} \Delta_{2-\min \{ f_{n_0}(x_1), \ldots, f_{n_0}(x_{n_0}) \}} (x)$, and thus $\dec(x) \geq \min \{ f_{n_0}(x_1), \ldots, f_{n_0}(x_{n_0}) \}$.
\end{proof}

As a consequence of theorem \ref{theorem:dec-c_0} (or a similar technique), we obtain the following: 
%first, we estimate the lower bound of $\Delta$-constant for $x \in B_{\ell_\infty^3}$, . That is, $\dec(x) = \max \{1-|x_n| : n = 1,2,3\}$ fails to hold. Next, we reprove that $\sup_{x \in S_{c_0}} \dec(x) = 2$ and compute the exact value of $\dec(x)$ when the point $x \in B_{c_0}$ is of the form $(t,\ldots,t,0,0,\ldots)$.

\begin{rem}\label{rem:ell-infty_c0} 
\, 
\begin{enumerate}
\itemsep0.3em
\item 
Let $x = (x_1,x_2,x_3) \in B_{\ell_\infty^3}$ be given. Define $f: [0,1] \to \R$ by
$$
f(t) = \begin{cases}
1+|t| &\text{ if }\, 0 \leq |t| \leq \dfrac{1}{3} \\
2(1-|t|) &\text{ if }\, \dfrac{1}{3} \leq |t| \leq 1.
\end{cases}
$$
Then, we have
$$
\dec (x) \geq \max\Big\{ \min \{ f(x_1),f(x_2),f(x_3) \} , 1-|x_1|, 1-|x_2|, 1-|x_3| \Big\}.
$$
Note that this in particular shows that a version of Proposition \ref{prop:ell-infty-n} for the $\Delta$-constant does not hold (see Figure \ref{figure_delta_2/3}). 
\begin{figure}
    \centering
    \begin{minipage}{0.5\textwidth}
        \centering
        \includegraphics[width=0.9\textwidth]{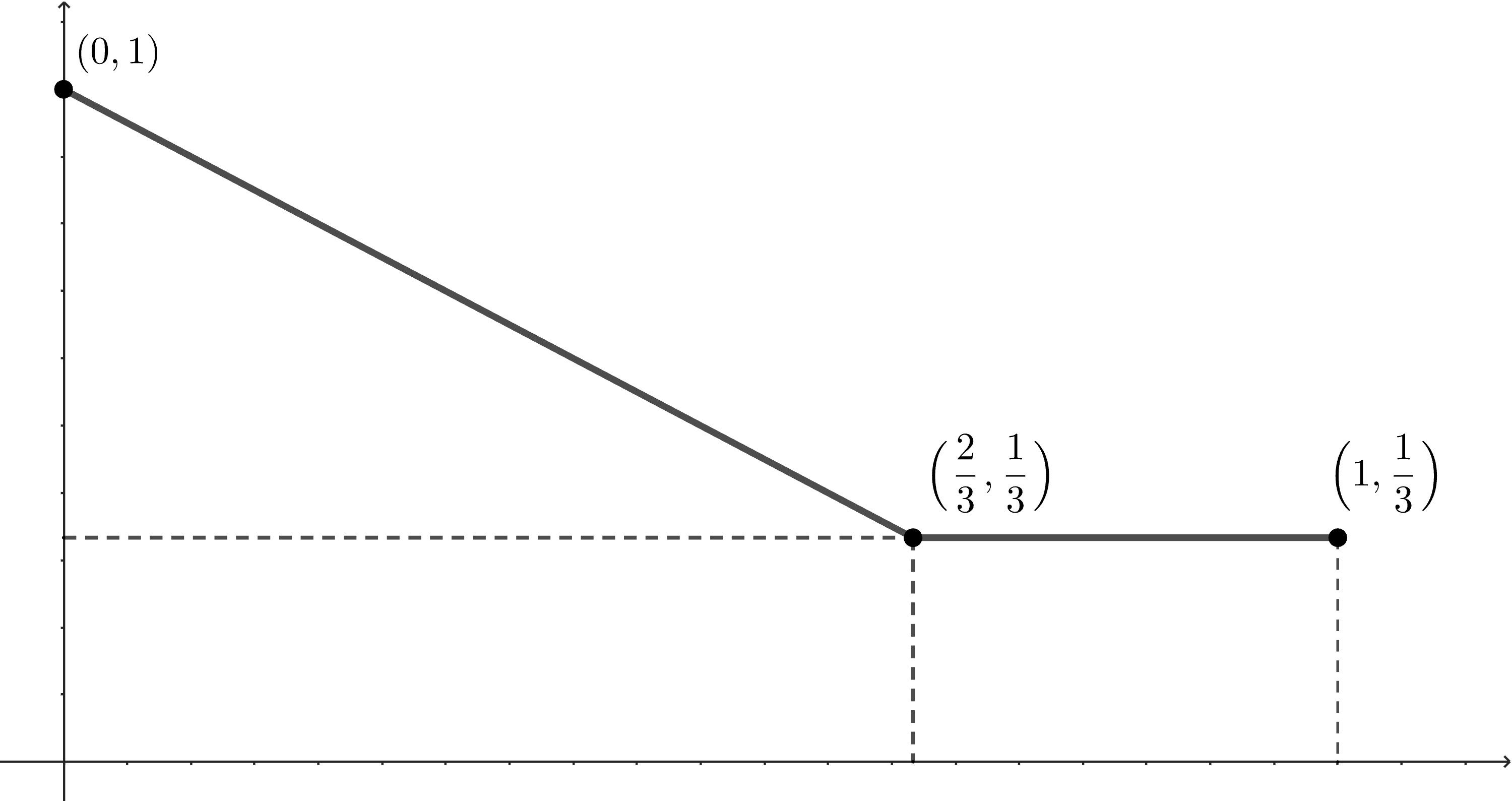} % first figure itself
        \caption{\\
        The value of \\
        $\dc \big(\big(\frac{2}{3}, \frac{2}{3}, t \big)\big)$, $0\leq t \leq 1$} \label{figure_dc}
    \end{minipage}\hfill
    \begin{minipage}{0.5\textwidth}
        \centering
        \includegraphics[width=0.9\textwidth]{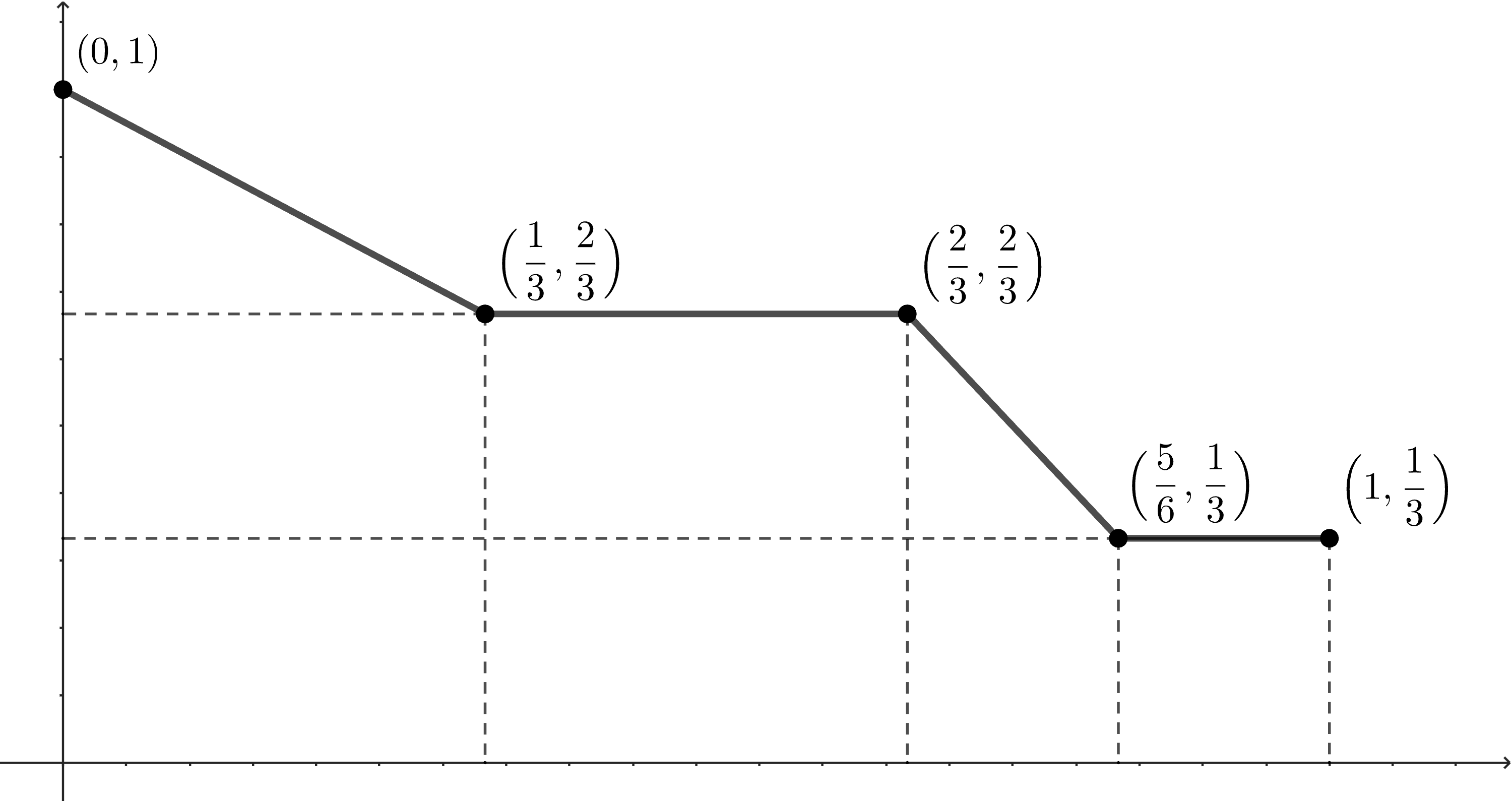} % second figure itself
        \caption{ \\
        The lower bound for \\
        $\dec \big(\big(\frac{2}{3}, \frac{2}{3}, t \big)\big)$, $0\leq t \leq 1$} \label{figure_delta_2/3}
    \end{minipage}
\end{figure}

\item Recall again from Proposition \ref{prop:almost2} that $\sup_{x \in S_{c_0}} \dec(x) = 2$. This can be shown explicitly by Theorem \ref{theorem:dec-c_0}: let $n \geq 3$ be given and consider 
\[
u = e_1 + \sum_{k=2}^{n+1} \left( 1- \frac{2}{n}\right) e_k \in S_{c_0}.
\]
A direct computation shows that the corresponding vector $z$ in Theorem \ref{theorem:dec-c_0} has norm $2-\frac{2}{n}$; thus, $\dec(u) \geq 2-\frac{2}{n}$. 
\item Let $n \in \N$ and $t \in [-1, 1]$ be given. Considering the element $\sum_{k=1}^n te_k \in B_{c_0}$, we have 
\begin{equation}\label{eq:ttt}
\dec \left(\sum_{k=1}^n te_k \right)= \min \Big \{ 1+|t| , \max \{ 1, (1-|t|)(n-1) \} \Big\}.  \tag{$\diamond$}
\end{equation}
Let us mention that this example shows that the mapping $x \in B_X \mapsto \dec (x)$ does not need to be Lipschitz (see Figure \ref{figure}). In order to obtain the equality in \eqref{eq:ttt}, we distinguish three cases. Without loss of generality, assume that $t \geq 0$. First, if $0\leq t\leq 1-\frac{2}{n}$, it is clear that $\dec \big(\sum_{k=1}^n te_k \big) \leq 1+t$. Second, suppose that $1-\frac{2}{n} \leq t \leq 1$. Consider 
\[
y^* = \Big(\frac{1}{n},\ldots, \frac{1}{n},0,\ldots \Big) \in S_{\ell_1}. 
\] 
Then $y^* \big(\sum_{k=1}^n te_k \big) = t > t -\delta$ for any $\delta>0$. By a standard convex combination argument, if $y=(y_n) \in S(y^*, 1-t+\delta)$, then $y_i \geq 1- (1-t+\delta)n$ for every $i=1,\ldots, n$. It follows that 
\begin{equation}\label{eq:ttt2}
\left\|y- \left (\sum_{k=1}^n te_k \right) \right\| \leq \max\{ 1, (1-t)(n-1) + n\delta\}.  \tag{$\dagger$}
\end{equation} 
Letting $\delta \rightarrow 0$, the right-hand side of \eqref{eq:ttt2} tends to $\max\{1,(1-t)(n-1)\}$. This implies that $\dec \big(\sum_{k=1}^n te_k \big)  \leq \max\{1,(1-t)(n-1)\}$.
\begin{figure}[h]
\includegraphics[scale=9]{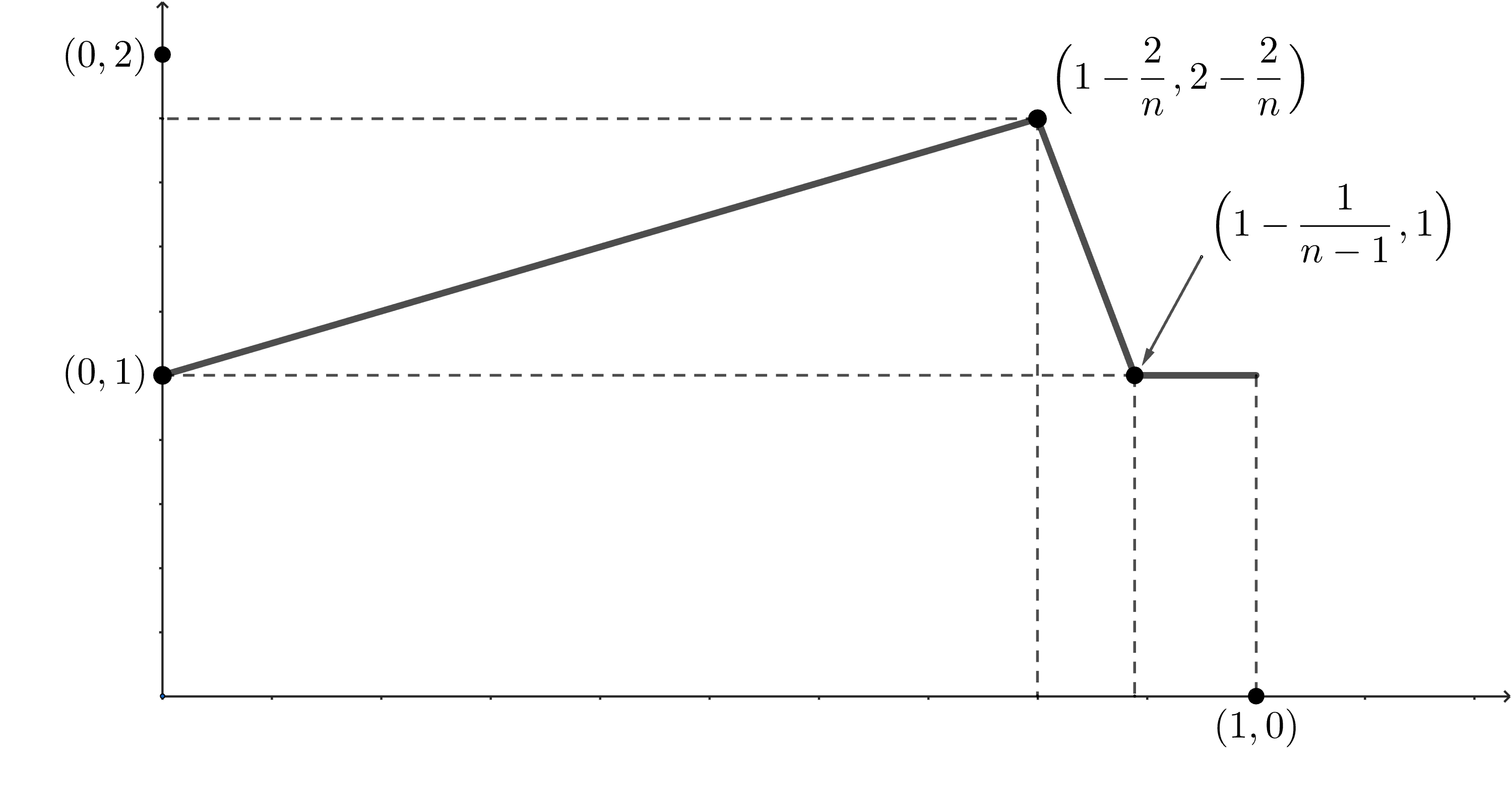}
\caption{The graph of $f(t)= \dec \big(\sum_{k=1}^n te_k \big)$, $0\leq t \leq 1$}     \label{figure}
\end{figure}
\end{enumerate} 
\end{rem}
%\vspace{0.5em}

We finish this section considering the space $\ell_\infty = \ell_1^*$. For a dual Banach space $X^*$, we can consider natural weak$^*$ versions of the Daugavet and $\Delta$-constant of a point by considering weak$^*$ slices instead of slices. For a point $x^* \in S_{X^*}$, the weak$^*$ Daugavet constant of $x$ is defined as: 
\[
\wdc(x^*) := \sup \Set{ \alpha \geq 0 | \forall \eps,\delta>0, y \in S_{X}, \exists \, y^* \in S(y,\delta) \text{ so that }  \|y^*-x^*\| > \alpha - \eps}
\]
where $S(y, \delta) = \{ z^* \in B_{Y^*} : z^*(y)>1-\delta\}$. The weak$^*$ $\Delta$-constant of $x$, denoted by $\wdec (x^*)$, can be defined in a similar way. It is clear that for a point $x^* \in B_{X^*}$, 
\begin{equation}\label{eq:weakstar2}
\dc(x^*) \leq \wdc(x^*) \leq \wdec(x^*) \text{ and } \dc(x^*) \leq \dec(x^*) \leq \wdec(x^*).
\end{equation} 
Moreover, if we write $J$ the canonical isometric embedding from $X$ into $X^{**}$, then 
\begin{equation}\label{eq:weakstar} 
%\dc( J(x)) \leq \dc(x) = \wdc(J(x)), 
\dc(x) = \wdc(J(x)). 
\end{equation} 
As a matter of fact, it is clear by definition that $\dc(x) \leq \wdc(J(x))$. For the reversed inequality, let $\eps >0$ and $S$ be a slice of $B_X$. Considering $S$ be a weak$^*$ slice of $B_{X^{**}}$, we find $z \in S_{X^{**}} \cap S$ such that $\|z-J_X (x)\| > \wdc(J_X (x))-\eps$. By Goldstine's theorem and using the lower weak$^*$ semicontinuity of the norm in dual spaces, we can find $u \in B_X \cap S$ such that $\|u-x\| > \wdc(J_X (x))-\eps$; hence $\dc(x) \geq \wdc(J_X (x))$. 

Combining the observations \eqref{eq:weakstar2} and \eqref{eq:weakstar}, we reprove the item (a) of Proposition \ref{prop:PLR} that $\dc(x) \geq \dc(J(x))$. We now present the following result concerning the weak$^*$ Daugavet constant of points in $\ell_\infty = \ell_1^*$. 

\begin{prop}
For every $x \in B_{\ell_\infty}$, we have $\wdc(x) = 1+ \limsup_n |x_n|$.
\end{prop}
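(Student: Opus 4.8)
Write $L:=\limsup_n|x_n|$ and recall $\ell_\infty=\ell_1^*$, so every weak$^*$ slice of $B_{\ell_\infty}$ has the form $S(y,\delta)=\{z^*\in B_{\ell_\infty}:z^*(y)>1-\delta\}$ for some $y=(y_n)\in S_{\ell_1}$. Exactly as for $\dc$ earlier in the paper, one has $\wdc(x)=\inf_S\sup_{y^*\in S}\|y^*-x\|$ with the infimum over weak$^*$ slices $S$, so it suffices to prove the two inequalities $\wdc(x)\ge1+L$ and $\wdc(x)\le1+L$. For the lower bound I would show that every weak$^*$ slice $S(y,\delta)$ satisfies $\sup_{y^*\in S}\|y^*-x\|\ge1+L$. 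Given $\eps>0$, the definition of $\limsup$ provides infinitely many indices $n$ with $|x_n|>L-\eps$, and since $\sum_n|y_n|=1$ forces $|y_n|\to0$, all but finitely many such indices satisfy $|y_n|<\delta/2$; fix one of them, say $k$. Setting $y^*_n=\sgn(y_n)$ for $n\neq k$ and $y^*_k=-\sgn(x_k)$ gives $y^*(y)\ge(1-|y_k|)-|y_k|>1-\delta$, so $y^*\in S(y,\delta)$, while $\|y^*-x\|\ge|y^*_k-x_k|=1+|x_k|>1+L-\eps$. As $\eps$ is arbitrary this yields the lower bound.

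For the upper bound, given $\eta>0$ I would produce a single weak$^*$ slice on which the supremum of $\|y^*-x\|$ is at most $1+L+\eta$. Using $L=\limsup_n|x_n|$, choose $N$ with $|x_n|\le L+\eta$ for all $n>N$, put $y=\sum_{m=1}^N\tfrac1N\sgn(x_m)e_m\in S_{\ell_1}$, and take $\delta>0$ with $N\delta<\eta$. For any $y^*\in S(y,\delta)$ the standard convex-combination estimate applied to $\sum_{m=1}^N\tfrac1N\sgn(x_m)y^*_m>1-\delta$ gives $\sgn(x_m)y^*_m>1-N\delta$, hence $|y^*_m-\sgn(x_m)|<N\delta<\eta$ for each $m\le N$. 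Consequently $|y^*_m-x_m|\le|y^*_m-\sgn(x_m)|+(1-|x_m|)<1+\eta$ for $m\le N$, while for $n>N$ the trivial bound $|y^*_n-x_n|\le1+|x_n|\le1+L+\eta$ applies; taking the supremum over coordinates gives $\|y^*-x\|\le1+L+\eta$. Letting $\eta\downarrow0$ yields $\wdc(x)\le1+L$.

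The main obstacle is the upper bound, and the crux is resisting the natural but doomed attempt to force $y^*$ close to $x$ on the first $N$ coordinates: a slice can pin $y^*_m$ only to the extreme values $\pm1$, never to an arbitrary $x_m$. The resolution I would stress is that pinning $y^*_m\approx\sgn(x_m)$ costs merely $1-|x_m|\le1$ on each early coordinate, which is harmless because it never exceeds $1+L$; all the genuine error is thereby pushed into the tail, where $\limsup_n|x_n|=L$ caps it at $1+L+\eta$. The only bookkeeping nuisance is the convention for $\sgn(x_m)$ when $x_m=0$, which one checks only improves the relevant estimates.
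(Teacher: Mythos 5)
Your proof is correct. The lower bound is in the same spirit as the paper's: both exploit that a weak$^*$ slice $S(y,\delta)$ with $y\in S_{\ell_1}$ only constrains a vector in $B_{\ell_\infty}$ on coordinates where $|y_n|$ is not small, so one is free to move against $x$ far out in the tail. The paper approximates an element of the slice by one in $c_0$, then replaces the \emph{entire} tail by $-\sgn(x_n)$, which yields $1+\sup_{n>m}|x_n|\ge 1+\limsup_n|x_n|$ in one stroke; you instead flip a single coordinate $k$ chosen so that $|x_k|>L-\eps$ and $|y_k|<\delta/2$, which costs you an $\eps$ but is equally valid. The more substantive difference is in the upper bound: the paper argues by contradiction, assuming $\wdc(x)>1+L$ and iteratively producing indices $N_1<j_1<N_2<j_2<\cdots$ with $|x_{j_i}|>\wdc(x)-\delta-1$, contradicting the definition of $\limsup$; you give a direct estimate, exhibiting for each $\eta>0$ the explicit slice $S\bigl(\frac{1}{N}\sum_{m\le N}\sgn(x_m)e_m,\delta\bigr)$ on which every element is within $1+L+\eta$ of $x$, the point being that pinning $y^*_m$ near $\sgn(x_m)$ on the head costs at most $1-|x_m|+\eta\le 1+\eta$ per coordinate while the tail is controlled by the $\limsup$. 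Your direct version is cleaner and makes the witnessing slice explicit. The only care needed (which you flag) is fixing a convention $\sgn(t)\in\{-1,+1\}$ so that $|\!-\!\sgn(x_k)-x_k|=1+|x_k|$ in the lower bound and $\bigl\|\frac{1}{N}\sum_{m\le N}\sgn(x_m)e_m\bigr\|_1=1$ in the upper bound; the paper tacitly does the same.
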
 

\begin{proof}
Let $x \in B_{\ell_\infty}$ be given. 
First, let $S=S(y,\delta)$ be a weak$^*$-slice of $B_{\ell_\infty}$ with $y=(y_n) \in S_{\ell_1}$. Find $z=(z_n) \in S_{c_0}$ so that $z \in S(y,\delta/3)$. Choose $m \in \N$ such that $\sum_{n=m+1}^\infty |y_n| < \delta/3$. Consider $w \in B_{\ell_\infty}$ given by 
\[
w = (z_1,\ldots, z_m, -\sgn(x_{m+1}), \sgn(x_{m+2}), \ldots ).
\] 
Then $\langle y, w\rangle \geq \sum_{n=1}^m y_n z_n - \sum_{n=m+1}^\infty |y_n| \geq \langle y, z \rangle - 2 \sum_{n=m+1}^\infty |y_n| > 1 - \delta/3 - 2\delta/3$; thus $w \in S(y,\delta)$. Moreover, 
\[
\|x-w\| \geq \sup_{n \geq m+1} | x_n + \sgn(x_n) | = 1+  \sup_{n \geq m+1} |x_n| \geq 1+ \limsup_n |x_n|. 
\]
This proves that $\wdc(x) \geq 1+\limsup_n |x_n|$. 

Next, assume that $\wdc(x) > 1+\limsup_n |x_n|$. Find $\delta >0$ sufficiently small so that 
\[
\wdc(x) - \delta > 1 + \limsup_n |x_n|. 
\]
Let $N_1 \in \N$ and consider the vector 
\[
y^{(1)} := \left(\frac{\sgn(x_1)}{N_1}, \ldots, \frac{\sgn(x_{N_1})}{N_1},0,0,\ldots\right) \in S_{\ell_1}. 
\]
Then there exists $z \in S(y^{(1)}, 1/N_1)$ such that $\|x-z\| > \wdc(x) - \delta > 1+\limsup_n |x_n|$. Note that if there exists $i \in \{1,\ldots, N_1\}$ so that $\sgn(x_i) \neq \sgn(z_i)$, then 
\[
\frac{N_1-1}{N_1} \geq \frac{1}{N_1} \sum_{n \in \{1,\ldots,N_1\}\setminus\{i\}} \sgn(x_n) z_n > \frac{1}{N_1} \sum_{n=1}^{N_1} \sgn(x_n) z_n > 1 - \frac{1}{N_1},
\]
which is a contradiction. Thus, $\sgn(x_i)=\sgn(z_i)$ for every $i=1,\ldots ,N_1$. In particular, this implies that there exists $j_1 > N_1$ such that $|x_{j_1} - z_{j_1}| > \wdc(x)-\delta$. Thus, $|x_{j_1}| > \wdc(x)-\delta-1$. Now, take $N_2 > j_1 > N_1$ and consider the element 
\[
y^{(2)} := \left(\frac{\sgn(x_1)}{N_2}, \ldots, \frac{\sgn(x_{N_2})}{N_2},0,0,\ldots\right) \in S_{\ell_1}. 
\]
Arguing in the same way, we end up with finding $j_2 > N_2$ such that $|x_{j_2}| > \wdc(x)-\delta-1$. In this way, we find a sequence of natural numbers $\{N_1 < j_1 < N_2 < j_2 < \cdots\}$. Passing to a subsequence, we may assume that $\lim_n |x_{j_n}|$ exists, so 
\[
1+ \lim_n |x_{j_n}| \geq \wdc(x) - \delta > 1 + \limsup_n |x_n|,  
\]
which is a contradiction. This shows that $\wdc(x) \leq 1 + \limsup_n |x_n|$ and completes the proof.
\end{proof} 

\subsection{On a Banach space ${L_1}$}

Let $\mu$ be a measure on some $\sigma$-algebra $\Sigma$ on a set $\Omega$. Recall that a set $A \in \Sigma$ is called an \textit{atom} if $0<\mu(A) < \infty$ and whenever $B \in \Sigma$ with $B \subseteq A$ satisfies $\mu(B) < \mu(A)$, then $\mu(B) = 0$. 
A $\sigma$-finite measure on $(\Omega, \Sigma)$ is called \textit{atomic} if every measurable set of positive measure contains an atom. In this case, there exists a countable partition of $\Omega$ formed by atoms up to a null set. In this subsection, we shall assume that all measures are $\sigma$-finite.

For simplicity, we denote by $L_1 (\mu)$ the space $L_1 (\Omega, \Sigma, \mu)$. 
Notice that a measurable function is almost everywhere constant on an atom. 
We start by giving the exact value of the Daugavet constant of an element in $L_1 (\mu)$ provided that $\mu$ is atomic. 

\begin{prop}\label{prop:dc-ell1}
Given a $\sigma$-finite atomic measure $\mu$ on $(\Omega, \Sigma)$, let $(A_n)$ be a countable partition of $\Omega$ formed by atoms up to a null set.  
For every $f \in {L_1 (\mu)}$ with $\|f\| \leq 1$, we have 
\[
\dc(f)= 1+\|f\| -2 \sup \left\{  |f \vert_{A_n} |\, \mu (A_n) : n \in \mathbb{N} \right\}.
\]
\end{prop}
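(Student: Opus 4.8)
The plan is to compute $\dc(f)$ via the formula $\dc(f) = \inf_{S} \sup_{g \in S} \|f - g\|$ derived in Section~\ref{subsec:def}, establishing the asserted value as a matching upper and lower bound. Write $\beta := \sup\{ |f\vert_{A_n}|\,\mu(A_n) : n \in \N\}$, so the claim is $\dc(f) = 1 + \|f\| - 2\beta$. Throughout I will use that every $g \in L_1(\mu)$ is a.e.\ constant on each atom $A_n$, so I may identify $g$ with the sequence of its values $(g\vert_{A_n})_n$ weighted by $(\mu(A_n))_n$; in particular $\|g\| = \sum_n |g\vert_{A_n}|\,\mu(A_n)$ and the dual pairing with $y^* = (y_n^*) \in L_\infty(\mu) = L_1(\mu)^*$ reads $y^*(g) = \sum_n y_n^* \, g\vert_{A_n}\,\mu(A_n)$, where $y_n^*$ is the (constant) value of $y^*$ on $A_n$.

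For the \emph{lower bound} $\dc(f) \geq 1 + \|f\| - 2\beta$, I would fix an arbitrary slice $S = S(y^*, \delta)$ with $\|y^*\|_\infty = 1$ and produce $g \in S$ with $\|f - g\|$ close to $1 + \|f\| - 2\beta$. The idea is to concentrate mass on a single atom $A_k$ on which $y^*$ is nearly norming, i.e.\ $|y_k^*|$ close to $1$; put $g := \sgn(y_k^*)\,\mu(A_k)^{-1}\mathbf{1}_{A_k}$, a norm-one function supported on $A_k$ with $y^*(g) = |y_k^*|$, which lies in $S$ once $|y_k^*| > 1 - \delta$. Then, since $f$ and $g$ have essentially disjoint relevant behavior away from $A_k$,
\[
\|f - g\| = \sum_{n \neq k} |f\vert_{A_n}|\,\mu(A_n) + \big| f\vert_{A_k} - \sgn(y_k^*)\mu(A_k)^{-1}\big|\,\mu(A_k) \geq \|f\| - 2|f\vert_{A_k}|\,\mu(A_k) + 1,
\]
using the triangle inequality on the $k$-th term. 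To make the right-hand side close to $1 + \|f\| - 2\beta$ I must choose $k$ so that $|f\vert_{A_k}|\,\mu(A_k)$ is close to $\beta$; the subtlety is that the supremum defining $\beta$ need not be attained, but since $y^*$ can be taken to be $\pm 1$ on any prescribed finite set of atoms and small elsewhere, I can select $k$ realizing $|f\vert_{A_k}|\,\mu(A_k) > \beta - \eps$ and build a norming $y^*$ concentrated on that $A_k$, giving $\sup_{g\in S}\|f-g\| \geq 1 + \|f\| - 2\beta - \eps$ over the resulting slices; taking the infimum over all slices and letting $\eps \to 0$ yields the bound.

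For the \emph{upper bound} $\dc(f) \leq 1 + \|f\| - 2\beta$, I would exhibit a single slice on which every element stays within $1 + \|f\| - 2\beta$ of $f$. By definition of $\beta$, choose an atom $A_k$ with $|f\vert_{A_k}|\,\mu(A_k)$ close to $\beta$ and take $y^* := \sgn(f\vert_{A_k})\,\mathbf{1}_{A_k}$, a norm-one functional, with the slice $S(y^*, \delta)$ for small $\delta$. For $g \in S$ one has $y^*(g) = \sgn(f\vert_{A_k})\,g\vert_{A_k}\,\mu(A_k) > 1 - \delta$, which forces $g\vert_{A_k}\,\mu(A_k)$ to have the sign of $f\vert_{A_k}$ and magnitude at least $1 - \delta$; consequently the $k$-th term $|f\vert_{A_k} - g\vert_{A_k}|\,\mu(A_k)$ is small — roughly $\mu(A_k)|f\vert_{A_k}| - (1-\delta)$ up to sign — while on the complement $\sum_{n\neq k}|g\vert_{A_n}|\,\mu(A_n) \leq 1 - (1-\delta) = \delta$ because $g$ has unit norm and mass $\geq 1-\delta$ already on $A_k$. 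Summing the complement contribution $\sum_{n\neq k}|f\vert_{A_n}|\,\mu(A_n) + \delta = \|f\| - |f\vert_{A_k}|\mu(A_k) + \delta$ with the $k$-th term $|f\vert_{A_k}|\mu(A_k) - (1-\delta)$ then gives $\|f - g\| \leq \|f\| + 1 - 2|f\vert_{A_k}|\mu(A_k) + O(\delta)$; letting $\delta \to 0$ and using $|f\vert_{A_k}|\mu(A_k) \to \beta$ produces the desired bound. The main obstacle is the careful bookkeeping in this last estimate — correctly bounding the $k$-th coordinate's contribution when $g\vert_{A_k}$ may overshoot $f\vert_{A_k}$, and controlling the leftover $\ell_1$-mass on the complement — together with handling the non-attainment of the supremum $\beta$ uniformly in both bounds.
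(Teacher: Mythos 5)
Your two bounds use exactly the test objects the paper uses (the normalized indicator of an atom on which the slice functional is nearly norming for the lower bound; the slice generated by $\sgn(f\vert_{A_k})\chi_{A_k}$ for the upper bound), and the upper-bound bookkeeping you outline is the paper's computation, just run directly rather than by contradiction. Two remarks, one of which is a real issue with how you close the lower bound. First, the non-attainment of $\beta=\sup_n |f\vert_{A_n}|\mu(A_n)$ that you flag as an obstacle is not one: the numbers $|f\vert_{A_n}|\mu(A_n)$ sum to $\|f\|$, hence tend to $0$, so the supremum is a maximum whenever $f\neq 0$ (and the case $f=0$ is trivial). Second, and more importantly, your plan for finishing the lower bound is logically backwards. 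For $\dc(f)\geq 1+\|f\|-2\beta$ you must produce a far point in an \emph{arbitrary} slice $S(y^*,\delta)$; you do not get to ``build a norming $y^*$ concentrated on'' an atom of your choosing, and you do not need $|f\vert_{A_k}|\mu(A_k)$ to be close to $\beta$. The index $k$ is forced on you by the given $y^*$ (any atom with $|y^*_k|>1-\delta$), and your own displayed inequality
\[
\|f-g\|\;\geq\;1+\|f\|-2|f\vert_{A_k}|\,\mu(A_k)\;\geq\;1+\|f\|-2\beta
\]
already finishes the argument, precisely because $|f\vert_{A_k}|\mu(A_k)\leq\beta$ for \emph{every} $k$. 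If you instead carried out the step you describe --- selecting $k$ with $|f\vert_{A_k}|\mu(A_k)>\beta-\eps$ and then constructing a $y^*$ supported there --- you would only have estimated $\sup_{g\in S}\|f-g\|$ over your hand-picked slices, which proves nothing about the infimum over all slices. Delete that step and the proof is the paper's.
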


\begin{proof}
Let $f \in L_1 (\mu)$ with $\|f\| \leq 1$ and $\delta >0$. 
Without loss of generality, let each $A_n$ be an atom for $\mu$. Note first that $|f\vert_{A_n}|| \mu(A_n)| \leq \|f\|$ for every $n \in \N$. 
Since $\mu$ is $\sigma$-finite, $L_1 (\mu)^* = L_\infty (\mu)$. Pick $g \in L_\infty(\mu)$ with $\|g \| = 1$. For each $n \in \N$, let $I_n : = \{ x \in A_n : |g(x)| > 1-\delta/2\}$. Then there exists $m \in \N$ such that $I_m$ has a positive measure, which implies that $\chi_{I_m} = \chi_{A_m}$ almost everywhere. 

Note that $h:= \frac{\sgn (g |_{A_m}) }{\mu(A_m)} \chi_{A_m}$ satisfies that 
\[
g(h) = \int_{A_m} \frac{\sgn (g |_{A_m}) }{\mu(A_m) } g \, d\mu =  \int_{I_m} \frac{ | g |_{A_m} | }{\mu(I_m) } \, d\mu \geq 1-\frac{\delta}{2} > 1- \delta.
\]
Observe that 
\begin{align*}
\|f-h\| &= \int_{A_m} \left | f- \frac{\sgn(g|_{A_m})}{\mu (A_m)} \right| \, d\mu + \int_{\cup_{n\neq m} A_n} |f| \, d\mu \\
&= | \sgn (g|A_m) - f|_{A_m} \mu (A_m)  | + \|f \| - |f|_{A_m}| \mu(A_m) \\ 
&\geq 1- |f|_{A_m}| \mu (A_m) + \|f \| - |f|_{A_m}| \mu(A_m) = 1+ \|f\| - 2 | f|_{A_m}| \mu(A_m).
\end{align*} 
This shows that $\dc(f) \geq 1+ \|f\| - 2 | f|_{A_m}| \mu(A_m)$.

Next, assume that 
\begin{equation}\label{eq:dc-ell1}
\dc(f) > 1+ \|f\| - 2 \sup \left\{  |f \vert_{A_n} |\, \mu (A_n) : n \in \mathbb{N} \right\}.
\end{equation}  
For simplicity, we may assume that $|f|_{A_1}| \mu(A_1) = \sup\left\{ |f|_{A_n}| \mu(A_n) : {n\in\N } \right\}$. If $|f|_{A_1}| \mu(A_1) = \|f\|$, then this would imply that 
\[
f =  {f \vert_{A_1}} \chi _{A_1}.
\] 
It is not difficult to check that $\dc (f) = 1 - \|f\|$ in such a case, which contradicts to \eqref{eq:dc-ell1}. 

Thus, we assume that $|f|_{A_1}| \mu(A_1) < \|f\|$. 
Take $S=S(\text{sgn}(f |_{A_1} ) \chi_{A_1} , \delta)$ with $\delta >0$ sufficiently small so that 
\[
\delta < \dc(f) - \bigl( 1+ \|f\| - 2 |f|_{A_1}| \mu(A_1)\bigr) \, \text{ and } \, \delta < \|f\| - |f|_{A_1}| \mu(A_1). 
\] 
Let us pick $h \in S$ with $\|h\|=1$ such that $\|f-h\| > \dc(f) - \delta$. That is, 
\[
\sgn ( f|_{A_1} ) h |_{A_1} \mu(A_1) > 1- \delta \geq \|f \| - \delta,
\]
and 
\[
\| f-h\| > \dc(f)-\delta > 1+ \|f\| - 2|f|_{A_1}| \mu(A_1). 
\]
Note in particular that $\sgn ( f|_{A_1} ) h |_{A_1} \mu(A_1) > |f|_{A_1}| \mu(A_1)$. 
Observe that 
\begin{align*}
\|f-h\| &= \int_{A_1} |f-h| \, d\mu + \int_{\cup_{n\geq 2} A_n} |f-h| \, d\mu \\
&= | \sgn ( f|_{A_1} ) h |_{A_1} \mu(A_1) - |f|_{A_1}|\mu(A_1)| + \int_{\cup_{n\geq 2} A_n} |f-h| \, d\mu \\ 
&\leq \sgn ( f|_{A_1} ) h |_{A_1} \mu(A_1) - |f|_{A_1}|\mu(A_1) + \|f\| - | f|_{A_1}| \mu(A_1)  + 1 - | h_{A_1}| \mu(A_1) \\
&\leq 1 + \|f\| - 2  | f|_{A_1}| \mu(A_1). 
\end{align*} 
which is a contradiction.
\end{proof}

Next, we obtain an upper bound for the $\Delta$-constant of an element $f \in L_1 (\mu)$ in terms of an atom $A$ contained in $\supp (f)$ and the idea of the proof comes from \cite[Theorem 3.1]{AHLP}, where it is proved that $\supp(f)$ does not contain any atom for $\mu$ if and only if $\dc(f) \text{ (or, } \dec(f)\text{)} = 2$.

\begin{prop}\label{prop:delta_L1}
Let $f \in L_1 (\mu)$ with $\|f\| \leq 1$. If $A$ is an atom in $\supp (f)$, then 
\[
\dec (f) \leq 1+\|f\|- 2|f \vert_{A}| \mu(A).
\] 
\end{prop}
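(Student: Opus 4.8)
The plan is to exploit the reformulation $\dec(f)=\inf\{\sup_{g\in S}\|f-g\| : S \text{ a slice of } B_{L_1(\mu)},\ f\in S\}$ recorded earlier in the paper, so that it suffices to exhibit, for every small $\eta>0$, a \emph{single} slice $S$ containing $f$ with $\sup_{g\in S}\|f-g\|\le 1+\|f\|-2|f\vert_A|\mu(A)+2\eta$; letting $\eta\to 0$ then gives the stated inequality. Throughout I write $m:=\mu(A)$ and recall that, since $A$ is an atom inside $\supp(f)$, the function $f$ is a.e.\ equal to a nonzero constant $c$ on $A$, so that $\beta:=|f\vert_A|\,\mu(A)=|c|\,m\in(0,\|f\|]$.

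The functional I would use is the one supported on the atom: $g^*:=\sgn(f\vert_A)\chi_A\in L_\infty(\mu)=L_1(\mu)^*$, which has $\|g^*\|_\infty=1$ and $g^*(f)=\int_A|f|\,d\mu=\beta$. I take the slice $S:=S(g^*,\delta)$ with $\delta:=1-\beta+\eta$ (and $0<\eta<\beta$). Then $g^*(f)=\beta>\beta-\eta=1-\delta$, so $f\in S$ as required, and the whole point of the atom is that any $g\in B_{L_1(\mu)}$ is a.e.\ equal to a constant $d$ on $A$, so membership $g\in S$ reads $\sgn(c)\,d\,m=g^*(g)>1-\delta=\beta-\eta>0$. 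Because $\beta=|c|m$, this simultaneously forces $\sgn(d)=\sgn(c)$ and $|d|>|c|-\eta/m$.

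The core estimate is then to bound $\|f-g\|$ for $g\in S$. I split $\|f-g\|=|c-d|\,m+\int_{\Omega\setminus A}|f-g|\,d\mu$ and bound the second term crudely by the triangle inequality, $\int_{\Omega\setminus A}|f-g|\,d\mu\le(\|f\|-\beta)+(\|g\|-|d|m)\le(\|f\|-\beta)+(1-|d|m)$. Now I feed in the sign information on $A$. When $|d|\ge|c|$, the same-sign relation yields $|c-d|=|d|-|c|$, the $|d|m$ contributions cancel, and one is left with exactly $1+\|f\|-2\beta$. When instead $|c|-\eta/m<|d|<|c|$, one has $|c-d|m=(|c|-|d|)m\le\eta$ while the term $1-|d|m$ costs at most $1-(|c|-\eta/m)m=1-\beta+\eta$; assembling these gives $\|f-g\|\le 1+\|f\|-2\beta+2\eta$. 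Taking the supremum over $g\in S$ and then $\eta\to0$ completes the argument.

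The one genuine subtlety — and the step I would watch most carefully — is the tension between needing $f$ to lie \emph{strictly} inside the slice, which demands $\delta>1-\beta$, and wanting the clean cancellation, which would prefer $\delta\le 1-\beta$ so that the slice condition $g^*(g)>\beta$ forces $|d|\ge|c|$ outright. These two requirements meet exactly at $\delta=1-\beta$ and hence cannot both hold; the device of perturbing to $\delta=1-\beta+\eta$, absorbing the resulting boundary case $|d|<|c|$ into an $O(\eta)$ error, and then sending $\eta\to0$ is precisely what bridges the gap. Everything else is routine triangle-inequality bookkeeping together with the standard fact that a measurable function is a.e.\ constant on an atom.
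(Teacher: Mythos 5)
Your argument is correct. It differs in presentation from the paper's main proof, which argues by contradiction: assuming $\dec(f)>1+\|f\|-2|f\vert_A|\mu(A)$, it derives the same upper bound on $d=g\vert_A$ for every $g$ with $\|f-g\|\geq\dec(f)-\eps$, and then invokes the convex-hull characterization of Proposition \ref{prop:dc-characterization}(b) to show that convex combinations of such $g$'s stay a fixed distance from $f$ when tested against $\int_A\cdot\,d\mu$. Your route instead works directly with the infimum-over-slices formula for $\dec$ and exhibits the explicit slice $S(\sgn(f\vert_A)\chi_A,\,1-\beta+\eta)$ containing $f$ on which $\sup_{g\in S}\|f-g\|\leq 1+\|f\|-2\beta+2\eta$; this is essentially the alternative argument the paper itself sketches in Remark \ref{rem:delta_L1}, carried out in full. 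What your version buys is the avoidance of the convex-combination machinery altogether, at the cost of having to handle explicitly the boundary tension you correctly identify between $f\in S$ (forcing $\delta>1-\beta$) and the clean cancellation (wanting $\delta\leq 1-\beta$); your $O(\eta)$ perturbation resolves this cleanly, and all the intermediate estimates (the sign forcing $\sgn(d)=\sgn(c)$, the split of $\|f-g\|$ over $A$ and its complement, and the two cases $|d|\geq|c|$ versus $|c|-\eta/m<|d|<|c|$) check out.
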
 

\begin{proof}
By considering $-f$ instead of $f$ when $c:= f \vert_A$ less than or equal to $0$, we may assume that $c \geq 0$. 
Assume to the contrary that $\dec (f) > 1+\|f\| -2c \mu(A)$. 
Let $\eps>0$ be such that $\eps < \dec(f) - ( 1+\|f\| - 2c \mu(A))$. Suppose that $g \in L_1 (\mu)$ satisfies that $\|g\| \leq 1$ and $\|f-g\| \geq \dec (f) - \eps$. Put $g \vert_A =: d$ and observe that 
\begin{align*} 
\dec(f) - \eps \leq \int_\Omega |f-g| \,d\mu &= \int_A |f-g|\,d\mu + \int_{\Omega\setminus A} |f-g|\,d\mu \\
&\leq \int_A |f-g|\,d\mu + \left( \|f\| -\int_A |f|  \, d\mu \right) + \left( 1 - \int_A |g| \, d\mu \right) \\
&\leq |c-d|\mu(A) + \|f\| + 1 - c \mu(A) - |d| \mu(A).
\end{align*} 
This implies that 
\begin{equation}\label{eq:mu(A)}
c\mu(A) + d\mu(A) \leq 1+\|f\| - \dec(f) +\eps + |c-d| \mu(A). 
\end{equation} 
if $c \leq d$, then \eqref{eq:mu(A)} implies that 
\[
2c \mu (A) \leq  1 + \|f\| -\dec (f) + \eps,
\]
which contradicts the choice of $\eps$. Thus, $c \geq d$, and we have from \eqref{eq:mu(A)} that 
\[
c \mu(A) + d\mu(A) \leq 1+\|f\| -\dec(f) + \eps + (c-d) \mu(A). 
\]
It follows that 
\begin{equation}\label{eq:d_ineq}
g\vert_A = d \leq \frac{1}{2\mu(A) } ( 1+ \|f\| -\dec(f) + \eps). 
\end{equation}

Now, by (c) of Proposition \ref{prop:dc-characterization}, we obtain that $f \in \overline{\co} \,\Delta_{2-\dec (f) + \eta}(f)$ for every $\eta >0$. In particular, $f \in \overline{\co} \,\Delta_{2-\dec (f) + \eps}(f) = \overline{\co} \,\{ g \in B_{L_1 (\mu)} : \|f-g\| \geq \dec (f) - \eps\}$.
However, if $g_1, \ldots, g_m \in \Delta_{2-\dec (f) + \eps} (f)$ and $\lambda_1,\ldots, \lambda_m$ are positive scalars so that $\sum_{i=1}^m \lambda_i = 1$, then 
\begin{align*}
\left\| f - \sum_{i=1}^m \lambda_i g_i \right \| &\geq \int_A \left | \sum_{i=1}^m \lambda_i (c- d_i) \right |\,d\mu \quad (d_i := g_i \vert_{A}, \, i=1,\ldots, m) \\ 
&= \int_A \sum_{i=1}^m \lambda_i (c- d_i) \,d\mu \quad (\text{since } c \geq d_i, \, i=1,\ldots, m) \\ 
&\geq \int_A c - \frac{1}{2\mu(A) } (  1+\|f\| -\dec(f) + \eps) \, d\mu \quad (\text{by } \eqref{eq:d_ineq}) \\
&= \frac{1}{2} \bigl[ 2c \mu(A) - (1+\|f\| -\dec(f) +\eps) \bigr] > 0.
\end{align*} 
This contradicts $f \in \overline{\co} \,\Delta_{2-\dec (f) + \eps}(f) $. 
\end{proof} 

\begin{remark}\label{rem:delta_L1}
Observe that there is another way to complete the proof of Proposition \ref{prop:delta_L1}. For instance, note that the assumption on $\eps >0$ is equivalent to say 
\[
\int_A f \, d\mu > \frac{1}{2} ( 1+\|f\| -\dec (f) + \eps). 
\]
Thus, by definition, there exists $g \in S_{L_1 (\mu)}$ such that  $\|f-g\| > \dec (f) - \eps$ and 
\[
d \mu (A) = \int_A g \, d\mu > \frac{1}{2} ( 1+\|f\| -\dec (f) + \eps), 
\] 
where $d = g \vert_A$, which contradicts \eqref{eq:d_ineq}. 
\end{remark} 

Combining Proposition \ref{prop:dc-ell1} and Proposition \ref{prop:delta_L1}, we obtain the following. 

\begin{cor}
Let $I = \mathbb{N}$ or $\{1,\ldots, N\}$ with $N \in \N$. 
For every $x = (x_n) \in B_{\ell_1 (I)}$, we have 
\[
\dc (x) = \dec (x)= 1+\|x\| -2\sup \{ |x_n| : n \in I \}.
\]
Moreover, 
\[
\wdc (x) = \wdec(x) = 1+\|x\| -2\sup \{ |x_n| : n \in I \}.
\]
\end{cor}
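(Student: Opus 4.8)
The plan is to derive this corollary directly from the two preceding propositions applied to the atomic measure setting, rather than proving anything from scratch. The key observation is that $\ell_1(I)$ is precisely $L_1(\mu)$ where $\mu$ is the counting measure on $I$, which is a $\sigma$-finite atomic measure whose atoms are the singletons $A_n = \{n\}$ with $\mu(A_n) = 1$. With this identification, the partition $(A_n)$ into atoms is just the coordinate decomposition, and for $x = (x_n) \in B_{\ell_1(I)}$ the quantity $|f\vert_{A_n}|\,\mu(A_n)$ becomes simply $|x_n|$.

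First I would invoke Proposition \ref{prop:dc-ell1}, which under the counting measure gives immediately
\[
\dc(x) = 1 + \|x\| - 2\sup\{|x_n| : n \in I\},
\]
since $\sup\{|x\vert_{A_n}|\,\mu(A_n) : n\} = \sup\{|x_n| : n \in I\}$. This already yields the Daugavet-constant formula. For the $\Delta$-constant, the general inequality $\dc(x) \le \dec(x)$ from Definition \ref{def:dc_and_dec} gives the lower bound $\dec(x) \ge 1 + \|x\| - 2\sup\{|x_n|\}$ for free. For the matching upper bound I would apply Proposition \ref{prop:delta_L1}: choosing the atom $A$ to be the singleton $\{m\}$ realizing (or approximating) the supremum $\sup\{|x_n|\}$ and lying in $\supp(x)$, that proposition yields $\dec(x) \le 1 + \|x\| - 2|x_m|$. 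Taking $A$ to run over the atoms achieving the supremum forces $\dec(x) \le 1 + \|x\| - 2\sup\{|x_n| : n \in I\}$, and combined with the lower bound this collapses both constants to the same value.

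One minor technical point to handle carefully is the case where the supremum $\sup\{|x_n|\}$ is not attained (possible only when $I = \mathbb{N}$) or where the coordinate realizing the supremum sits outside $\supp(x)$; in the latter situation that coordinate is zero and so the bound is only interesting at nonzero coordinates, while in the former I would apply Proposition \ref{prop:delta_L1} along a sequence of atoms $\{m_k\}$ with $|x_{m_k}| \to \sup\{|x_n|\}$ and pass to the limit. The degenerate case where $x$ is supported on a single atom (so $\|x\| = \sup\{|x_n|\}$) is already addressed inside the proof of Proposition \ref{prop:dc-ell1}, which records that $\dc(x) = 1 - \|x\|$ there, consistent with the stated formula.

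Finally, for the weak$^*$ statements I would use the identification $\ell_1(I) = c_0(I)^*$ when $I = \mathbb{N}$ (and the trivial finite-dimensional case otherwise) together with the chain of inequalities \eqref{eq:weakstar2}, namely $\dc(x) \le \wdc(x) \le \wdec(x)$ and $\dec(x) \le \wdec(x)$. Since $\dc(x) = \dec(x)$ has already been computed, it suffices to produce the reverse bound $\wdec(x) \le 1 + \|x\| - 2\sup\{|x_n|\}$; the argument bounding $\dec(x)$ via the atom $A = \{m\}$ in Proposition \ref{prop:delta_L1} transfers verbatim to weak$^*$ slices, since the functional $\sgn(x\vert_A)\chi_A$ used there is a finitely supported element of $c_0(I)$ and hence defines a weak$^*$ slice of $B_{\ell_\infty}$. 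I expect the only real obstacle to be bookkeeping around the non-attained supremum and ensuring the weak$^*$ version of the $L_1$ estimate is genuinely valid with the same test functional; everything else is a direct specialization of results already established.
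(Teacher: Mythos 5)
Your proposal is correct and follows essentially the same route as the paper: both equalities for $\dc$ and $\dec$ are read off from Proposition \ref{prop:dc-ell1} and Proposition \ref{prop:delta_L1} (with the infimum over atoms handling a non-attained supremum automatically), and the weak$^*$ statement follows from the chain $\dc(x)\leq\wdc(x)\leq\wdec(x)$ together with the observation that the test functional in the $\Delta$-constant upper bound is $\pm e_k\in c_0$, hence defines a weak$^*$ slice. This is exactly the paper's argument, merely written out in more detail.
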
 

\begin{proof}
The equalities for $\dc(x)$ and $\dec(x)$ are immediate from Proposition \ref{prop:dc-ell1} and Proposition \ref{prop:delta_L1}. For their weak$^*$ constants, it suffices to observe that $\wdec(x)\leq 1+\|x\| -2\sup \{ |x_n| : n \in I \}$. Indeed, note that in the case of $\ell_1$, the slice used in Remark \ref{rem:delta_L1} is nothing but a weak$^*$ slice determined by $e_k \in c_0$ for some $k \in \N$. 
\end{proof} 

\subsection{On a uniform algebra}

Given an infinite compact Hausdorff space $K$ and $f \in S_{C(K)}$, it is known that $f$ is a $\Delta$-point (or, Daugavet point) if and only if $|f(t)| = 1$ for some limit point $t \in K$ \cite[Theorem 3.4]{AHLP}. Afterwards, it is generalized in \cite[Theorem 3.2]{MR2022} to $L_1$-predual cases. 

Under certain assumptions about extreme points of $B_{X*}$, the following result provides an upper bound for the $\Delta$-constant of a point $x$ in a general Banach space $X$, allowing us to estimate an upper bound for the $\Delta$-constant of a certain element of uniform algebras.

%By following their argument step-by-step, then one can show that if there is no limit point $t \in K$ such that $|f(t)| = 1$, then 
%\[
%\dec(f) \leq 2 - \min \Big\{ 1-\max\{|f(t)|:t\in  H^c \}, \frac{2}{|H|} \Big\},
%\]
%where $H= \{t \in K : |f(t)| = 1\}$. However, arguing in a slightly sharper way, we can obtain the following. 

\begin{prop}\label{prop:general_extreme}
Let $X$ be a Banach space and $x \in S_X$. 
Suppose that 
\begin{equation}\label{eq:ext}
\exists \,\eps >0 \text{ such that } \{ e^* \in \ext (B_{X^*}) : |e^* (x)| > 1 - \eps\}  \text{ is finite. }
\end{equation} 
Then 
\begin{equation}\label{eq:l1predual}
\dec(x) \leq 1+ \sup \{|e^*(x)| : e ^* \in \ext (B_{X^*}), \, |e^*(x)| < 1 \} \,\,(< 2).  
\end{equation} 
\end{prop}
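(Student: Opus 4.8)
The plan is to work entirely through the slice description $\dec(x)=\inf\{\sup_{y\in S}\|x-y\| : S \text{ a slice of } B_X \text{ with } x\in S\}$ recorded above, and to produce, for each small $\delta>0$, a \emph{single} slice $S$ containing $x$ whose radius around $x$ is at most $1+\beta+o(1)$, where I write $\beta:=\sup\{|e^*(x)| : e^*\in\ext(B_{X^*}),\ |e^*(x)|<1\}$. The whole point is that hypothesis \eqref{eq:ext} forces the set of \emph{peak} functionals $P:=\{e^*\in\ext(B_{X^*}) : |e^*(x)|=1\}$ to be finite, since $P\subseteq\{e^*\in\ext(B_{X^*}) : |e^*(x)|>1-\eps\}$; and a finite collection of norming functionals can be packaged into one norming functional defining the desired slice.

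First I would record the representation $\|z\|=\max_{e^*\in\ext(B_{X^*})}e^*(z)$ valid for every $z\in X$, which holds by Bauer's maximum principle applied to the weak$^*$-continuous linear functional $e^*\mapsto e^*(z)$ on the weak$^*$-compact convex set $B_{X^*}$ (whose extreme points are nonempty by Krein--Milman). Applying this to $z=x$ shows in particular that $P_+:=\{e^*\in\ext(B_{X^*}) : e^*(x)=1\}$ is nonempty; since $\ext(B_{X^*})$ is symmetric and $P=P_+\cup(-P_+)$, the set $P_+$ is also finite. I would then fix strictly positive weights $\lambda_{e^*}$, $e^*\in P_+$, with $\sum_{e^*\in P_+}\lambda_{e^*}=1$ and set $x^*:=\sum_{e^*\in P_+}\lambda_{e^*}e^*$. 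Then $\|x^*\|\leq 1$ and $x^*(x)=1$, whence $\|x^*\|=1$ and $x\in S(x^*,\delta)$ for every $\delta>0$.

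The heart of the argument is to bound $\|x-y\|$ for $y\in S(x^*,\delta)$ by splitting the representation $\|x-y\|=\max_{e^*\in\ext(B_{X^*})}e^*(x-y)$ according to whether $e^*$ is a peak. For a non-peak $e^*$ (so $|e^*(x)|<1$) I would simply estimate $e^*(x-y)\leq|e^*(x)|+|e^*(y)|\leq\beta+1$. For a peak, the constraint $x^*(y)>1-\delta$ together with $e_0^*(y)\leq 1$ for all $e_0^*\in P_+$ forces $e^*(y)>1-\delta/\lambda_{e^*}$ for each $e^*\in P_+$; hence $e^*(x)-e^*(y)<\delta/\lambda_{e^*}$, while for the opposite sign $-e^*\in -P_+$ one gets $(-e^*)(x)-(-e^*)(y)=e^*(y)-1\leq 0$. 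Thus $e^*(x-y)<\max_{e_0^*\in P_+}\delta/\lambda_{e_0^*}$ on all of $P$. Combining the two cases gives $\|x-y\|\leq\max\{\beta+1,\ \delta/\min_{e_0^*\in P_+}\lambda_{e_0^*}\}$, which equals $1+\beta$ once $\delta$ is small enough. Letting $\delta\to 0$ in the slice characterization yields $\dec(x)\leq 1+\beta$. Finally $\beta<1$, because $\beta$ is bounded above by the maximum of the finitely many values $|e^*(x)|<1$ with $e^*\in F$ and of the quantity $1-\eps$; hence the parenthetical bound $1+\beta<2$ follows.

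The main obstacle I anticipate is the bookkeeping at the peaks: I must verify that the single functional $x^*$, built only from the positive peaks $P_+$, simultaneously pins $y$ down at \emph{every} peak $e^*\in P$ (including the negative peaks in $-P_+$), so that the contribution of $P$ to $\max_{e^*}e^*(x-y)$ is genuinely $O(\delta)$ rather than $O(1)$. Everything else is routine, provided one is careful that \eqref{eq:ext} is exactly what guarantees $P$ finite and that the supremum defining $\beta$ is in fact a maximum of numbers $<1$ (so that $\beta<1$ strictly); the degenerate case in which no extreme point satisfies $|e^*(x)|<1$ can be dismissed at once, since there the peak estimate alone gives $\dec(x)=0$.
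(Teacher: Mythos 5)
Your proof is correct and follows essentially the same route as the paper: both identify the finite set of extreme functionals peaking at $x$, average them into a single norming functional $x^*$ (the paper uses equal weights $1/n$ and a fixed quantitative slice width $\tfrac{1-\alpha}{2n}$, you use arbitrary positive weights and let $\delta\to 0$), and then bound $\|x-y\|$ via the extreme-point representation of the norm by splitting into peak and non-peak cases. The only differences are cosmetic, and your handling of the negative peaks and of the degenerate all-peaks case is sound.
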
 

\begin{proof}
By the assumption that the set in \eqref{eq:ext} is finite, and by the Krein-Milman theorem, observe that $\{ e^* \in \ext  (B_{X^*}) : e^* (x) = 1 \}$ is nonempty and finite. Let us say 
\[
\{ e^* \in \ext  (B_{X^*}) : e^* (x) = 1 \}  = \{e_1^*, \ldots, e_n^*\}.   
\]
It follows that 
\[
\alpha:= \sup \{|e^*(x)| : e^* \in \ext (B_{X^*}) \setminus \{\pm e_i^* :i=1,\ldots, n\} \} < 1.
\]
Consider the slice $S=S\bigl(g,\frac{1 - \alpha}{2n}\bigr)$ where $g = \frac{1}{n} \sum_{i=1}^n e_i^*$. If $y \in S$, then it is not difficult to check that $y \in S\bigl(e_i^*, \frac{1 - \alpha}{2}\bigr)$ for every $i =1,\ldots, n$. In particular, $|e_i^* (x -y)| < \frac{1 + \alpha}{2}$ for every $i =1,\ldots, n$. Thus, we observe that 
\begin{align*}
\|y-x\| &=\sup \{ |e^*(y-x)|: e^* \in \ext (B_{X^*})\} \\
&\leq \max \bigl\{ 1+\alpha, \max\{  |e_i^* (y-x)|: i=1,\ldots, n\} \bigr\}   = 1 +\alpha < 2. 
\end{align*} 
Since this holds for every $y \in S$, we conclude \eqref{eq:l1predual}.
\end{proof}

Let $K$ be an infinite compact Hausdorff space. We say $A \subseteq C(K)$ is a uniform algebra if it is a closed subalgebra of $A$ which separates points of $K$ and contains the constant functions. Recall that the \textit{Choquet boundary} for $A$ is defined by 
\begin{equation}\label{eq:partialA}
\partial A = \{ t \in A : \delta_t \vert_A \text{ is an extreme point of } B_{A^*} \}. \tag{$\blackdiamond$}
\end{equation} 

Given $f \in S_A$, notice from \eqref{eq:partialA} that the assumption \eqref{eq:ext} in Proposition \ref{prop:general_extreme} is equivalent to say that   
\begin{equation*}%\label{eq:C(K)_assumption} 
\exists \,\eps >0 \text{ such that } \{ t \in \partial A : |f(t)|> 1 - \eps \} \text{ is finite,} 
\end{equation*} 
which is, in turn, equivalent to that there exists no limit point $t \in \partial A$ so that $|f(t)| = 1$. In this case, $H := \{ t \in \partial A : |f(t)| = 1\}$ is a set of isolated points and 
\[
\sup \{ |f(t)| : t \in \partial A \setminus H \} = \max \{ |f(t)| : t \in \partial A \setminus H \} < 1. 
\]
Having this in mind, we obtain the following as a consequence of Proposition \ref{prop:general_extreme}. 
%In this case, the estimate \eqref{eq:l1predual} coincides with the one \eqref{eq:ck} in Proposition \ref{prop:ck}. 

\begin{cor}\label{cor:ck}
Let $A \subseteq C(K)$ be a uniform algebra and $f \in S_{A}$. Suppose that there is no limit point $t \in \partial A$ such that $|f(t)| = 1$. If $H=\{t \in K : |f(t)| = 1\}$, then 
\begin{equation*}\label{eq:ck}
\dec(f) \leq 1+ \max\{|f(t)|:t\in  \partial A \setminus H\} < 2.
\end{equation*}
\end{cor}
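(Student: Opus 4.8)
The plan is to apply Proposition \ref{prop:general_extreme} to the Banach space $X = A$ and the point $x = f$, and then to translate the resulting bound from the language of extreme functionals to the language of the Choquet boundary.

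First I would record the standard description of $\ext(B_{A^*})$. By the classical theory of uniform algebras, every extreme point of $B_{A^*}$ has the form $\pm\delta_t|_A$ for some $t \in \partial A$, and conversely each such $\delta_t|_A$ is extreme by the very definition \eqref{eq:partialA} of the Choquet boundary. Consequently, for $e^* \in \ext(B_{A^*})$ corresponding to $t \in \partial A$ one has $|e^*(f)| = |f(t)|$, so that the family $\{|e^*(f)| : e^* \in \ext(B_{A^*})\}$ coincides with $\{|f(t)| : t \in \partial A\}$.

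Second, I would verify that hypothesis \eqref{eq:ext} of Proposition \ref{prop:general_extreme} holds. Under the correspondence above, the set $\{e^* \in \ext(B_{A^*}) : |e^*(f)| > 1-\eps\}$ is finite exactly when $\{t \in \partial A : |f(t)| > 1-\eps\}$ is finite. The hypothesis that no limit point $t \in \partial A$ satisfies $|f(t)| = 1$ means precisely that the points of $\partial A$ where $|f| = 1$ are isolated; an upper semicontinuity plus compactness argument then shows that, for all small enough $\eps>0$, the superlevel set $\{t \in \partial A : |f(t)| > 1-\eps\}$ is finite. This is exactly the equivalence recorded in the paragraph preceding the statement, so \eqref{eq:ext} is satisfied.

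Finally, I would invoke Proposition \ref{prop:general_extreme} and rewrite its conclusion. Since $\|f\| = 1$ forces $|f(t)| \leq 1$ on $\partial A$, the condition $|e^*(f)| < 1$ corresponds to $t \in \partial A \setminus H$, and the same compactness argument shows that the supremum over this set is attained and is strictly less than $1$. Thus Proposition \ref{prop:general_extreme} yields
\[
\dec(f) \leq 1 + \sup\{|e^*(f)| : e^* \in \ext(B_{A^*}),\ |e^*(f)| < 1\} = 1 + \max\{|f(t)| : t \in \partial A \setminus H\} < 2,
\]
which is the desired bound. The only genuinely non-formal ingredient is the identification $\ext(B_{A^*}) = \{\pm\delta_t|_A : t \in \partial A\}$ together with the compactness argument upgrading the supremum to an attained maximum below $1$; both are standard facts about uniform algebras and are already foreshadowed in the discussion leading up to the corollary, so I expect the whole argument to be short.
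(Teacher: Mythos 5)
Your proposal is correct and follows essentially the same route as the paper: the corollary is obtained by applying Proposition \ref{prop:general_extreme} after identifying $\ext(B_{A^*})$ with $\{\pm\delta_t|_A : t \in \partial A\}$, translating hypothesis \eqref{eq:ext} into the finiteness of $\{t \in \partial A : |f(t)| > 1-\eps\}$ (equivalently, the absence of limit points where $|f|=1$), and noting that the supremum over $\partial A \setminus H$ is an attained maximum strictly below $1$. This matches the discussion the paper places immediately before the corollary.
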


\subsection{On a Lipschitz-free space $\mathcal{F}(M)$}\label{subsec:lipfree}
Let us start by recalling some basic definitions about Lipschitz-free spaces. Given a pointed metric space $M$ with a distinguished point $0$, we denote by $\Lip (M)$ the space of all real-valued Lipschitz functions $f$ on $M$ which vanish at $0$ equipped with the norm 
\[
\|f\| = \sup \left \{ \frac{|f(x)-f(y)|}{d(x,y)}:x,y\in M, \, x\neq y \right\}. 
\] 
For each point $x \in M$, let $\delta_x$ be the evaluation functional on $\Lip (M)$, that is, $\delta_x (f) = f(x)$ for every $f \in \Lip (M)$. The norm closed linear span of $\{ \delta_x : x \in M \}$ in $\Lip (M)^*$ is called the \textit{Lipschitz-free space over $M$}, and denoted by $\mathcal{F}(M)$. We refer the reader to \cite{Weaver} for background on Lipschitz-free spaces. 

%It turned out in \cite[Theorem 2.1]{V2023} that the converse of \cite[Proposition 3.1]{JRZ} is true in Lipschitz-free spaces over general metric spaces. 
In the following, we investigate a quantitative version of the result \cite[Theorem 2.1]{V2023} by following the original idea and finally obtain the converse of Lemma \ref{lemma:dentdc} is valid in the case of Lipschitz-free spaces. Recall that $[u,v]_\delta := \{ p \in M : d(u, p) + d(v, p) < d(u,v)+\delta\}$.

\begin{theorem}\label{theorem:characterization_free}
Let $M$ be a metric space, $\mu \in S_{\mathcal{F} (M)}$ and $0 <\alpha \leq 2$ be given. Then the following are equivalent. 
\begin{enumerate} 
\itemsep0.3em
\item[\textup{(a)}] $\dc (\mu) \geq \alpha$,
\item[\textup{(b)}] for every $\nu \in \dent (B_{\mathcal{F} (M)})$, we have $\| \mu-\nu \| \geq \alpha$,
\item[\textup{(c)}] if for $u \neq v \in M$ and $r, s>0$ there exists $\delta >0$ such that 
\[
[u,v]_\delta \subseteq B(u,rd(u,v)) \cup B(v, sd(u,v))
\]
then $\| \mu - m_{u,v} \| \geq \alpha - 2 (r+s)$. 
\end{enumerate} 
\end{theorem}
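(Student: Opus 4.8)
The plan is to establish the cycle $(a)\Rightarrow(c)\Rightarrow(b)\Rightarrow(a)$, using as a bridge the known description of the denting points of $B_{\mathcal{F}(M)}$: every $\nu\in\dent(B_{\mathcal{F}(M)})$ is a molecule $\nu=m_{u,v}=\tfrac{\delta_u-\delta_v}{d(u,v)}$ whose pair $(u,v)$ satisfies the containment hypothesis of $(c)$ for \emph{all} $r,s>0$ (equivalently, for every $\eps>0$ there is $\delta>0$ with $[u,v]_\delta\subseteq B(u,\eps\, d(u,v))\cup B(v,\eps\, d(u,v))$). Throughout I will use that $\Lip(M)=\mathcal{F}(M)^*$, so slices of $B_{\mathcal{F}(M)}$ are induced by norm-one Lipschitz functions, that $\|m_{u,v}\|=1$, and that since $\|g\|=\sup_{p\neq q}\tfrac{g(p)-g(q)}{d(p,q)}$ every slice contains a molecule of slope close to $1$.

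The implication $(c)\Rightarrow(b)$ is the short one. Given $\nu=m_{u,v}\in\dent(B_{\mathcal{F}(M)})$, the description above says the premise of $(c)$ holds for $(u,v)$ with every $r,s>0$; hence $(c)$ yields $\|\mu-\nu\|\geq\alpha-2(r+s)$ for all $r,s>0$, and letting $r,s\to0$ gives $\|\mu-\nu\|\geq\alpha$. (The implication $(a)\Rightarrow(b)$ is in any case immediate from Lemma~\ref{lemma:dentdc}, which gives $\|\mu-\nu\|\geq\dc(\mu)\geq\alpha$; I keep $(c)\Rightarrow(b)$ as the live arrow so that $(b)$ is reached from $(c)$.)

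For $(a)\Rightarrow(c)$ I would turn the containment hypothesis into a localizing slice. Fix $(u,v)$, $r,s>0$ and $\delta>0$ with $[u,v]_\delta\subseteq B(u,rd(u,v))\cup B(v,sd(u,v))$, and $\eps>0$. The goal is to produce a norm-one $g\in\Lip(M)$ and $\beta>0$ so that $S(g,\beta)\ni m_{u,v}$ while every $y\in S(g,\beta)$ satisfies $\|y-m_{u,v}\|\leq 2(r+s)+o_\beta(1)$; granting this, the slice form $\dc(\mu)=\inf_S\sup_{y\in S}\|\mu-y\|\geq\alpha$ furnishes $y\in S(g,\beta)$ with $\|\mu-y\|>\alpha-\eps$, whence $\|\mu-m_{u,v}\|\geq\|\mu-y\|-\|y-m_{u,v}\|>\alpha-\eps-2(r+s)-o_\beta(1)$, and $\eps,\beta\to0$ gives $(c)$. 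The construction of $g$ is the delicate point: the naive peak $x\mapsto\tfrac12\bigl(d(x,v)-d(x,u)\bigr)$ does \emph{not} localize, since a molecule $m_{p,q}$ with $u,p,q,v$ in approximate geodesic position can have $g$-slope near $1$ yet lie far from $m_{u,v}$. Instead I would build $g$ with slope $1$ across $(u,v)$ but suitably flattened off the pair, so that slope close to $1$ forces $p\in B(u,rd(u,v))$ and $q\in B(v,sd(u,v))$ (the containment being exactly what excludes interior approximate midpoints); the estimate $\|m_{p,q}-m_{u,v}\|\leq\frac{d(u,p)+d(v,q)}{d(p,q)}+\bigl|1-\tfrac{d(u,v)}{d(p,q)}\bigr|$ then yields the $2(r+s)$ bound, and the passage from molecules to arbitrary $y\in S(g,\beta)$ is the usual slice-weight argument (low-slope molecules carry total weight $O(\beta)$).

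The remaining implication $(b)\Rightarrow(a)$ is where I expect the main difficulty. Arguing contrapositively, assume $\dc(\mu)<\alpha$, so there is a slice $S=S(g,\beta)$ with $\sup_{y\in S}\|\mu-y\|<\alpha$. Choosing a molecule $m_{u_0,v_0}\in S$ of slope close to $1$, I would iterate the segment splitting $m_{u,v}\approx\lambda\,m_{u,p}+(1-\lambda)m_{p,v}$ at approximate midpoints $p\in[u,v]_\delta$ away from both endpoints (which exist precisely when the current pair fails the containment), each step keeping the shorter molecule in a controlled enlargement of $S$. The crux is to drive this, using completeness of $M$ (passing to $\overline{M}$, which leaves $\mathcal{F}(M)$ unchanged), to a genuine denting molecule $\nu=m_{u,v}$ with $\|\mu-\nu\|<\alpha$; the obstacles are keeping the molecule length bounded away from $0$ and forcing the defect from being denting to tend to $0$. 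Such a $\nu$ contradicts $(b)$, closing the cycle. This is the quantitative refinement of the argument of \cite{V2023}, and I expect the bookkeeping of the $2(r+s)$ error through the splitting to be the most technical part.
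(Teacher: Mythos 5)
Your outline assembles the right ingredients but leaves both of its load-bearing arrows unproved, and the orientation of your cycle, (a)$\Rightarrow$(c)$\Rightarrow$(b)$\Rightarrow$(a), is precisely what makes them hard. The paper runs the cycle the other way: (a)$\Rightarrow$(b) is immediate from Lemma~\ref{lemma:dentdc}; and (b)$\Rightarrow$(c) is where Veeorg's localization lemma (Lemma~\ref{lemma_lip_free}(c)) does all the work, since the containment hypothesis directly yields a \emph{denting} molecule $m_{x,y}$ with $x\in B(u,rd(u,v))$ and $y\in B(v,sd(u,v))$, whence $\|m_{x,y}-m_{u,v}\|\le 2(r+s)$ by the molecule-distance formula and the triangle inequality finishes. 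This is exactly the content you are trying to rebuild by hand in your (a)$\Rightarrow$(c) when you propose a ``suitably flattened'' norm-one function $g$ whose slice localizes around $m_{u,v}$: such a slice exists precisely because Lemma~\ref{lemma_lip_free}(c) produces a nearby denting point, and constructing $g$ directly amounts to reproving that lemma. You flag this as ``the delicate point'' and do not carry it out; as written, (a)$\Rightarrow$(c) is a gap.

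The second gap is (b)$\Rightarrow$(a). You correctly identify the crux --- terminating the midpoint-splitting iteration at a \emph{genuine} denting point at distance $<\alpha$ from $\mu$ --- but do not resolve it, and your stated obstacles are not quite the right ones. The paper's (c)$\Rightarrow$(a) avoids the issue entirely: assuming every iterate satisfies $\|\mu-m_{u_k,v_k}\|<\alpha-\eps$, hypothesis (c) forces the containment to fail at scale $r=s=\eps/4$ at every stage, so a splitting point always exists, the lengths $d(u_k,v_k)$ decay geometrically, and after finitely many steps Lemma~\ref{lemma_lip_free}(a) gives $\|\mu-m_{u_n,v_n}\|\ge 2-\eps\ge\alpha-\eps$, a contradiction; no denting point is ever manufactured in this arrow. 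In your contrapositive version one must instead argue that the iteration cannot run forever (all iterates stay in the slice $S$, while Lemma~\ref{lemma_lip_free}(a) would push arbitrarily short molecules to distance near $2$ from $\mu$), hence it stalls at a pair where the containment holds at some small scale, and then invoke Lemma~\ref{lemma_lip_free}(c) again with $r+s$ small compared to the gap $\alpha-\sup_{y\in S}\|\mu-y\|$ to produce the denting point contradicting (b). This can be made to work, but it requires strictly more bookkeeping than the paper's route and none of it appears in your write-up. Your (c)$\Rightarrow$(b), via the known characterization of denting points of $B_{\mathcal{F}(M)}$ as molecules satisfying the containment at every scale, and your parenthetical (a)$\Rightarrow$(b) via Lemma~\ref{lemma:dentdc}, are both fine.
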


Before we present the proof, we recall some necessary lemmas. 

\begin{lemma}\label{lemma_lip_free} 
Let $M$ be a metric space. 
\begin{enumerate}
\itemsep0.3em
\item[\textup{(a)}] \cite[Theorem 2.6]{JRZ} Let $\mu \in S_{\mathcal{F}(M)}$. Then for every $\eps >0$ there exists $\delta >0$ such that if $u\neq v \in M$ satisfy $d(u,v) < \delta$, then $\| \mu - m_{u,v} \| \geq 2-\eps$. 
\item[\textup{(b)}] \cite[Lemma 1.2]{V2023} Given $x,y,u,v \in M$ with $x\neq y$, $u\neq v$, and $\eps >0$, the following are equivalent: 
\begin{enumerate}
\itemsep0.3em
\item[\textup{(b1)}] $\| m_{x,y} + m_{u,v} \| \geq 2- \eps$,
\item[\textup{(b2)}] $d(x,v) + d(u,y) \geq d(x,y) + d(u,v) - \eps \max \{ d(x,y), d(u,v) \}$. 
\end{enumerate} 
Furthermore, the equalities in \textup{(a)} and \textup{(b)} hold simultaneously and in that case
\[
\| m_{x,y} + m_{u,v} \| = \frac{d(x,v)+d(u,y) + |d(x,y) - d(u,v)|}{\max \{ d(x,y) , d(u,v) \}}. 
\]
\item[\textup{(c)}] \cite[Lemma 2.4]{V2023} If $M$ is complete, $u,v \in M$ and $r,s,\delta>0$ with $r+s < d(u,v)$ are such that 
\[
[u,v]_\delta \subseteq B(u,r) \cup B(v,s), 
\]
then there exists $x \in B(u,r)$ and $y \in B(v,s)$ such that $m_{x,y}$ is a denting point. 
\end{enumerate} 
\end{lemma}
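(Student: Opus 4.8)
Since the three assertions are quoted verbatim from \cite{JRZ} and \cite{V2023}, the cleanest route is to indicate for each where it comes from and how it is established. I would treat (b) as a self-contained finite computation and (a), (c) as the two substantial inputs, quoting the metric-space machinery of the cited papers rather than reconstructing it.

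For (b), the plan is to reduce the norm $\|m_{x,y}+m_{u,v}\|$ to a finite-dimensional optimization. Only the four values $f(x),f(y),f(u),f(v)$ of a competing function $f\in B_{\Lip(M)}$ enter the two difference quotients, and by the McShane extension theorem any $1$-Lipschitz assignment on $\{x,y,u,v\}$ extends to a $1$-Lipschitz function on $M$ (the additive constant being irrelevant to the molecules); hence
\[
\|m_{x,y}+m_{u,v}\| = \sup\Bigl\{\tfrac{f(x)-f(y)}{d(x,y)}+\tfrac{f(u)-f(v)}{d(u,v)}\Bigr\},
\]
the supremum being over $f\colon\{x,y,u,v\}\to\R$ with $|f(p)-f(q)|\le d(p,q)$. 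This is a linear program in four variables; assuming without loss of generality $d(x,y)\ge d(u,v)$ and pushing $f(x),f(u)$ up and $f(y),f(v)$ down against the coupling constraints $|f(x)-f(v)|\le d(x,v)$ and $|f(y)-f(u)|\le d(y,u)$ identifies the optimal vertex and yields the exact value $\frac{d(x,v)+d(u,y)+|d(x,y)-d(u,v)|}{\max\{d(x,y),d(u,v)\}}$. Comparing this quantity with $2-\eps$ gives the equivalence (b1)$\Leftrightarrow$(b2), and since the two defining estimates saturate the same constraints simultaneously, the equalities in (a) and (b) hold together. The only care needed is checking that the asserted vertex is indeed optimal and that the value never exceeds $2=\|m_{x,y}\|+\|m_{u,v}\|$.

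For (a), the plan is the duality argument of \cite[Theorem 2.6]{JRZ}: pick $f\in B_{\Lip(M)}$ with $\langle\mu,f\rangle>1-\eps/2$ and produce a competitor $g\in B_{\Lip(M)}$ whose slope along $(u,v)$ is forced to $-1$, so that $\langle\mu-m_{u,v},g\rangle=\langle\mu,g\rangle+1$. A natural candidate is the infimal convolution
\[
g=\min\bigl\{f,\ (f(v)-d(u,v))+d(\,\cdot\,,u)\bigr\},
\]
which is $1$-Lipschitz, satisfies $g(u)-g(v)=-d(u,v)$, obeys $g\le f$, and differs from $f$ by at most $2d(u,v)$ pointwise. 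The genuine obstacle is that $\langle\mu,\cdot\rangle$ is controlled by the \emph{Lipschitz} norm of $f-g$ rather than its sup norm, so smallness of $d(u,v)$ does not by itself force $\langle\mu,f-g\rangle$ to be small; securing $\langle\mu,g\rangle>1-\eps$ \emph{uniformly} over all pairs with $d(u,v)<\delta$ is exactly the content of the metric argument in \cite{JRZ}, which I would cite rather than reprove.

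For (c), the plan is to upgrade the single gap hypothesis at scale $\delta$ into a pair satisfying a gap condition at \emph{all} scales and then invoke the known characterization of denting molecules. Starting from $(u,v)$ with $[u,v]_\delta\subseteq B(u,r)\cup B(v,s)$ and $r+s<d(u,v)$, one refines $u$ and $v$ by an iteration that shrinks the admissible balls while preserving the covering property, using completeness of $M$ to pass to limits $x\in B(u,r)$ and $y\in B(v,s)$. The limiting pair should be a \emph{denting pair}, i.e. for every $\eta>0$ there is $\delta'>0$ with $[x,y]_{\delta'}\subseteq B(x,\eta)\cup B(y,\eta)$, which matches the criterion forcing $m_{x,y}\in\dent(B_{\mathcal{F}(M)})$. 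The main obstacle here is organizing the iteration so that the limit stays inside the prescribed balls and the covering condition survives the passage to the limit; this is precisely the construction carried out in \cite[Lemma 2.4]{V2023}.
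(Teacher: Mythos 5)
The paper offers no proof of this lemma at all---it is stated purely as a recall of known results via the citations to \cite[Theorem 2.6]{JRZ} and \cite[Lemmas 1.2 and 2.4]{V2023}---and your proposal defers to exactly the same sources for the two substantial parts (a) and (c), correctly flagging in each case the genuine difficulty (the Lipschitz-norm versus sup-norm control of $\langle\mu,f-g\rangle$ in (a), and the survival of the covering condition in the limit in (c)) that those papers resolve. Your self-contained sketch of (b) is moreover sound: the reduction to the four-point set is legitimate since McShane extension makes $\mathcal{F}(N)$ isometric in $\mathcal{F}(M)$ for $N\subseteq M$, and writing $f(x)-f(y)=[f(x)-f(v)]+[f(v)-f(u)]+[f(u)-f(y)]$ gives the upper bound $\frac{d(x,v)+d(u,y)+|d(x,y)-d(u,v)|}{\max\{d(x,y),d(u,v)\}}$ for the objective, with the vertex you describe attaining it precisely in the regime $d(x,v)+d(u,y)\leq d(x,y)+d(u,v)$ relevant to the equality case---so the proposal matches the paper's treatment and is correct.
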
 

\begin{proof}[Proof of Theorem \ref{theorem:characterization_free}]
(a) $\Rightarrow$ (b): It is a direct consequence of Lemma \ref{lemma:dentdc}. 

(b) $\Rightarrow$ (c): Arguing as in the proof of (ii) $\Rightarrow$ (iii) in \cite[Theorem 2.1]{V2023}, it is enough to prove the case when $M$ is complete. 
Let $u \neq v \in M$, $r,s >0$ such that there exists $\delta >0$ satisfying 
\[
[u,v]_\delta \subseteq B(u,rd(u,v)) \cup B(v, sd(u,v)). 
\] 
If $r+s \geq \alpha/2$, then there is nothing to prove; so we assume that $r+s < \alpha/2$. 
By Lemma \ref{lemma_lip_free}.(c), there exist $x \in B(u, r d(u,v))$ and $y \in B(v, s d(u,v))$ such that $m_{x,y}$ is a denting point. By the assumption, we then have that $\|\mu - m_{x,y} \| \geq \alpha$. Note that 
\[
d(u,x) + d(v,y) \leq r d(u,v) + s d(u,v) < \frac{\alpha}{2} d(u,v) \leq d(u,v). 
\]
Thus, we may find $\eta >0$ such that 
\[
d(x,u) + d(v,y) = d(u,v) + d(x,y) - \eta \max \{ d(x,y), d(u,v)\}. 
\] 
By applying Lemma \ref{lemma_lip_free}.(b), we obtain that 
\begin{align*}
 \|m_{x,y} - m_{u,v}\|&=\| m_{x,y} + m_{v,u} \| \\
 &= \frac{d(u,x)+d(v,y) + |d(u,v) - d(x,y)|}{\max \{d(u,v), d(x,y) \}} \\
 &\leq \frac{ 2d(u,x) + 2 d(v,y)}{\max \{ d(u,v), d(x,y) \}} \leq 2 \frac{rd(u,v)+s d(u,v)}{\max \{d(u,v),d(x,y)\}} \leq 2(r+s). 
\end{align*}
Consequently, 
\[
\| \mu - m_{u,v}\| \geq \| \mu - m_{x,y} \| - \| m_{x,y} - m_{u,v} \| \geq \alpha - 2(r+s). 
\] 

(c) $\Rightarrow$ (a): Fix $\eps >0$ and a slice $S= S(f,\xi)$ of $B_{\mathcal{F} (M)}$. Let $u_0 \neq v_0 \in M$ be such that $f(u_0)-f(v_0) > (1-\xi) d(u_0,v_0)$. By Lemma \ref{lemma_lip_free}.(a), there exists $\gamma >0$ such that if $x\neq y \in M$ satisfy $d(x,y) < \gamma$, then $\| \mu - m_{x,y} \| \geq 2-\eps$. 
Let $n \in \mathbb{N}$ and $\delta >0$ be such that 
\begin{itemize}
\itemsep0.3em
\item $f(u_0)-f(v_0) > (1-\xi) (1+\delta)^n d(u_0,v_0)$; 
\item $\left(1-\dfrac{\eps}{4} + \delta\right)^n d(u_0,v_0) < \gamma$. 
\end{itemize}
If $\| \mu - m_{u_0, v_0}\| \geq \alpha- \eps$, then there is nothing to prove. Suppose that $\| \mu - m_{u_0,v_0} \| < \alpha -\eps$. Then by the assumption, there exists 
\[
p \in [u_0,v_0]_{\delta d(u_0,v_0)} \setminus \left( B\left(u_0, \frac{\eps}{4}d(u_0,v_0)\right) \cup B\left(v_0,  \frac{\eps}{4} d(u_0,v_0)\right) \right). 
\]
This allows us to choose $u_1, v_1 \in M$ such that 
\begin{itemize}
\itemsep0.3em
\item $f(u_1)-f(v_1) > (1-\xi) (1+\delta)^{n-1} d(u_1,v_1)$; 
\item $d(u_1, v_1) < \left(1-\dfrac{\eps}{4}+\delta \right) d(u_0,v_0)$
\end{itemize}
(for its detail, see the proof of (iii) $\Rightarrow$ (i) in \cite[Theorem 2.1]{V2023}). Assume for $k \in \{ 1,\ldots, n-1\}$ that 
\begin{itemize}
\itemsep0.3em
\item $f(u_k)-f(v_k) > (1-\xi) (1+\delta)^{n-k} d(u_k,v_k)$; 
\item $d(u_k, v_k) < \left(1-\dfrac{\eps}{4}+\delta \right)^k d(u_0,v_0)$.
\end{itemize}
If $\| \mu - m_{u_k,v_k}\| \geq \alpha -\eps$ for some $k$, then we finish the proof. If not, we end up with the case:
\begin{itemize}
\itemsep0.3em
\item $f(u_n)-f(v_n) > (1-\xi)  d(u_n,v_n)$; 
\item $d(u_n, v_n) < \left(1-\dfrac{\eps}{4}+\delta \right)^n d(u_0,v_0) < \gamma$; implying that $\| \mu - m_{u_n, v_n}\| \geq 2- \eps$. 
\end{itemize}
Therefore, in any case, we find $u \neq v \in M$ so that $m_{u,v} \in S$ and $\| \mu - m_{u,v}\| \geq \alpha-\eps$. 
\end{proof} 

With the help of the structure of Lipschitz-free space, we can construct a Banach space in which a particular point exhibits diametrically opposed behaviors concerning its Daugavet constant and $\Delta$-constant. It should be noted that a similar discussion has already taken place in \cite[Section 3]{AAL+}, where the authors constructed a Banach space $X$ with a $\Delta$-point which is a limit of strongly exposed points of $B_X$ (thus, answering \cite[Question 7.13]{MPR2023} in the positive).

\begin{theorem}\label{theorem:deltanotdauga}
There exists a Banach space $X$ with a point $x \in S_X$ such that $\dec(x)=2$ and $\dc(x)=0$. 
\end{theorem}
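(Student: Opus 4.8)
The plan is to realize $X$ as a Lipschitz-free space $\mathcal{F}(M)$ over a carefully designed complete pointed metric space $M$, and to take $x$ to be a molecule $\mu = m_{u,v}$ for two suitable points $u \neq v \in M$. The two demands $\dec(\mu)=2$ and $\dc(\mu)=0$ pull in opposite directions: the first asks $\mu$ to be a $\Delta$-point, i.e.\ genuinely diametral inside every slice through it, whereas the second is implied (via Lemma \ref{lemma:dentdc}) as soon as $\mu$ lies in the norm-closure of $\dent(B_{\mathcal{F}(M)})$, which forces $\mu$ to be approximable by ``thin'', dentable pieces of $M$. The whole construction is arranged so that these two features coexist near $\mu$.

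For the Daugavet-constant side I would build $M$ so that there is a sequence of molecules $m_{x_n,y_n}$ with $m_{x_n,y_n}\to\mu$ in norm, each $m_{x_n,y_n}$ being a denting point of $B_{\mathcal{F}(M)}$. Concretely, the pairs $(x_n,y_n)$ would sit on pieces of $M$ isolated enough that, for small $\delta$, the metric segment $[x_n,y_n]_\delta$ is contained in a union of two small balls $B(x_n,r)\cup B(y_n,s)$; Lemma \ref{lemma_lip_free}.(c) then certifies (after possibly perturbing $x_n,y_n$ inside those balls) that $m_{x_n,y_n}$ is denting. Arranging $(x_n,y_n)\to(u,v)$ appropriately yields $\|\mu-m_{x_n,y_n}\|\to 0$, whence Lemma \ref{lemma:dentdc} gives $\dc(\mu)\leq\|\mu-m_{x_n,y_n}\|\to 0$, so $\dc(\mu)=0$. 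Equivalently, this is the failure of condition (b) of Theorem \ref{theorem:characterization_free} for every $\alpha>0$.

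For the $\Delta$-constant side I would check that $\mu$ is a $\Delta$-point, i.e.\ $\dec(\mu)=2$ (recall from the discussion after Definition \ref{def:dc_and_dec} that $\dec(x)=2$ is precisely the statement that $x$ is a $\Delta$-point). Using the known description of $\Delta$-points of molecules in Lipschitz-free spaces, this reduces to showing that for every Lipschitz function $f$ with $f(\mu)$ close to $1$ and every $\eps>0$ one can locate a further molecule $m_{a,b}$ in the same slice with $\|\mu-m_{a,b}\|>2-\eps$; by Lemma \ref{lemma_lip_free}.(b) the latter is the geometric inequality $d(u,b)+d(a,v)\geq d(u,v)+d(a,b)-\eps\max\{d(u,v),d(a,b)\}$. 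I would therefore attach a metrically convex (or richly branching) substructure accumulating on $v$ (or on the segment $[u,v]$) exactly so that such far molecules are available inside every slice through $\mu$, while keeping this substructure disjoint from the ``spikes'' carrying the denting molecules $m_{x_n,y_n}$, so that it does not spoil the approximation argument of the previous paragraph.

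The main obstacle is precisely this coexistence. The slices that expose the denting molecules $m_{x_n,y_n}$ must be small yet fail to contain $\mu$, whereas every slice that does contain $\mu$ must still see the branching/convex part and hence a far molecule. The delicate bookkeeping is verifying that the far molecules supplied for the $\Delta$-property genuinely lie in the prescribed slice, and that the denting certifications via Lemma \ref{lemma_lip_free}.(c) survive the presence of the branching substructure; once $M$ is set up to separate these two roles, the conclusion follows by combining Lemma \ref{lemma:dentdc}, Theorem \ref{theorem:characterization_free}, and the $\Delta$-point characterization. This mirrors, in the free-space language, the Banach-space construction of \cite[Section 3]{AAL+} of a $\Delta$-point that is a limit of strongly exposed points.
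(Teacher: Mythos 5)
Your strategy is exactly the paper's: take $X=\mathcal{F}(M)$, let the point be a molecule $m_{x,y}$ made into a $\Delta$-point by a metrically convex piece of $M$, and kill the Daugavet constant by exhibiting denting molecules (certified via Lemma \ref{lemma_lip_free}.(c)) converging in norm to $m_{x,y}$, then invoking Lemma \ref{lemma:dentdc}. What your write-up leaves open is the one thing that actually has to be produced, namely a concrete $M$; the paper takes $M= ([0,1]\times \{0\}) \cup\{ (0,\frac{1}{n}),(1,\frac{1}{n}) : n \in \mathbb{N} \}\subseteq(\R^2,\|\cdot\|_2)$ with $x=(1,0)$, $y=(0,0)$, and the ``spikes'' $u_n=(1,\frac1n)$, $v_n=(0,\frac1n)$. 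With this choice the ``delicate bookkeeping'' you worry about in your last paragraph evaporates: the $\Delta$-point property of $m_{x,y}$ is immediate from the existence of a $1$-Lipschitz path joining $x$ and $y$ (\cite[Proposition 4.2]{JRZ}), so no slice-by-slice search for far molecules is needed, and the denting certification for $m_{u_n,v_n}$ is trivial because $[u_n,v_n]_\delta=\{u_n,v_n\}$ for small $\delta$; the norm estimate $\|m_{u_n,v_n}-m_{x,y}\|\le 4/n$ from \cite[Lemma 1.3]{ccgmr19} then finishes the argument. So the proposal is the right proof in outline, but until the metric space is written down and these two verifications are performed it is a plan rather than a proof.
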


\begin{proof}
Let us consider 
\[
M:= ( [0,1]\times \{0\}) \cup\left\{ \left(0,\frac{1}{n}\right),\left(1,\frac{1}{n}\right) : n \in \mathbb{N} \right\}\subseteq (\mathbb R^2,\Vert\cdot\Vert_2).
\]
Put $x:=(1,0)$, $y:=(0,0)$, $u_n = \bigl(1,\frac{1}{n}\bigr)$, and $v_n = \bigl(0,\frac{1}{n}\bigr)$ for each $n \in \N$. 
As there is a (1-Lipschitz) path connecting $x$ and $y$, by \cite[Proposition 4.2]{JRZ}, the molecule $m_{x,y}$ is a $\Delta$-point, i.e., $\dec (m_{x,y})=2$. 

Let us fix $n \in \N$, and note that $[u_n,v_n]=\{u_n,v_n\}$. Thus, it is clear to see that there exist sufficiently small $r,s,\delta >0$ with $r+s< d(u_n,v_n)=1$ such that 
\[
[u_n,v_n]_\delta := \{ p \in M : d(u_n, p) + d(v_n, p) < d(u_n,v_n)+\delta\} = \{u_n,v_n\},
\]
$B(u_n,r) = \{u_n\}$, and $B(v_n, s)=\{v_n\}$. Applying (3) of Lemma \ref{lemma_lip_free}, we obtain that $m_{u_n, v_n}$ is a denting point.

Suppose that $\dc (m_{x,y} ) >0$. Take $\alpha >0$ so that $\dc(m_{x,y})\geq \alpha >0$. By Theorem \ref{theorem:characterization_free}, it follows that we have $\| m_{u_n,v_n} - m_{x,y} \| \geq \alpha$ for every $n \in \N$. However, the distance between molecules can be estimated as follows (see \cite[Lemma 1.3]{ccgmr19}): 
\[
\| m_{u_n,v_n} - m_{x,y} \| \leq 2 \frac{d(x,u_n) + d(y, v_n) }{\max \{ d(x, y), d(u_n,v_n)\}} = \frac{4}{n} \rightarrow 0.
\]
This contradicts $\dc (m_{x,y}) \geq \alpha >0$; hence we conclude that $\dc(m_{x,y})=0$. 
\end{proof}

\section{Stability results}\label{sec:stability}

A norm $N$ on $\mathbb{R}^2$ is called \emph{absolute} if 
\[
N(a,b) = N(|a|,|b|) \quad \text{for all } (a,b)\in\mathbb{R}^2,
\]
and \emph{normalized} if $N(1,0)=N(0,1)=1$. 
Let $X$ and $Y$ be Banach spaces and $N$ an absolute normalized norm on $\mathbb{R}^2$. We denote by $X \oplus_N Y$ the product space $X \times Y$ endowed with the norm 
\[
\|(x,y)\|_N = N(\|x\|,\|y\|) \quad \text{for all } x\in X \text{ and } y \in Y. 
\]

The behavior of Daugavet and $\Delta$-points while taking direct sums with absolute normalized norm $N$ is first analyzed in \cite{AHLP}. Afterwards, the study of the stability for Daugavet and $\Delta$-points is continued in \cite{HPV}. In this section, we study how Daugavet and $\Delta$-constants behave under taking absolute sums. As the Daugavet and $\Delta$-constants have been shown to be closely related to the notions of certain Daugavet indexes of thickness (see Proposition \ref{prop:thick}), we find several results in \cite{HLLNR} concerning the stability of Daugavet indexes of thickness useful in our scenario. 
For simplicity, we shall consider only points in the sphere of $X\oplus_N Y$.

\subsection{Stability for the Daugavet constant}

We start with the following result, which is motivated by \cite[Proposition 2.2]{HLLNR}. The proof is similar, but we present the proof for the sake of completeness. 

\begin{prop}\label{prop:abs_N_ell_1}
Let $X$ and $Y$ be Banach spaces, $N$ be an absolute normalized norm on $\mathbb{R}^2$, and $\gamma>0$ is such that $N(\cdot) \geq \gamma \|\cdot\|_1$. Then we have the following: For $(x,y) \in S_{X \oplus_N Y}$, 
\[
\dc((x,y)) \geq 
\begin{cases}
2\gamma \left( \min \left\{ \dc \left( \dfrac{x}{\|x\|} \right) , \dc \left( \dfrac{y}{\|y\|}\right) \right\} - 1 \right) &\text{when } x \neq 0, y \neq 0, \\
2\gamma (\dc(x) - 1) &\text{when } y =0,\\
2\gamma( \dc(y)-1) &\text{when } x=0.
\end{cases} 
\]
\end{prop}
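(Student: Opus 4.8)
The plan is to argue directly from the slice description $\dc(z)=\inf_{S}\sup_{w\in S}\|z-w\|$ recorded just after Definition \ref{def:dc_and_dec}: it suffices to show that for every slice $S(\Phi,\delta)$ of $B_{X\oplus_N Y}$ and every $\eps'>0$ one can exhibit some $w\in S(\Phi,\delta)$ whose distance to $(x,y)$ beats the claimed quantity up to $\eps'$. First I would record the elementary facts about an absolute normalized norm that drive everything: $\|\cdot\|_\infty\le N\le\|\cdot\|_1$ (so $N$ is monotone), and, dualizing the hypothesis $N\ge\gamma\|\cdot\|_1$, the estimate $N^*\le\gamma^{-1}\|\cdot\|_\infty$. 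Writing $\Phi=(x^*,y^*)$ with $N^*(\|x^*\|,\|y^*\|)=1$, these yield three inequalities I will use repeatedly: $\max\{\|x^*\|,\|y^*\|\}\ge\gamma$, every pair $(s,t)\ge 0$ with $N(s,t)=1$ satisfies $s+t\ge 1$, and $\|x\|+\|y\|\ge 1$ because $N(\|x\|,\|y\|)=1$.

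For the main case $x\ne 0\ne y$, put $\beta=\min\{\dc(x/\|x\|),\dc(y/\|y\|)\}$ and fix a norming pair $(s,t)$, i.e.\ $s,t\ge 0$, $N(s,t)=1$ and $s\|x^*\|+t\|y^*\|=1$ (it exists since the dual norm on $\mathbb{R}^2$ is attained). The idea is to move \emph{both} coordinates at once. Applying the defining property of $\dc(x/\|x\|)\ge\beta$ to the functional $x^*/\|x^*\|$ (when $\|x^*\|>0$) produces $u\in S(x^*/\|x^*\|,\rho)$ with $\|u-x/\|x\|\|>\beta-\eps'$, and symmetrically a $v$ for the $Y$-coordinate; I then set $w=(su,tv)$. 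Since $\|u\|,\|v\|\le 1$ we get $\|w\|_N\le N(s,t)=1$, and the norming pair gives $\Phi(w)=s x^*(u)+t y^*(v)>(1-\rho)\bigl(s\|x^*\|+t\|y^*\|\bigr)=1-\rho$, so $w\in S(\Phi,\delta)$ once $\rho<\delta$.

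The crux is the distance estimate, and this is where I expect the real work to lie. Using $N\ge\gamma\|\cdot\|_1$,
\[
\|w-(x,y)\|_N\ \ge\ \gamma\bigl(\|su-x\|+\|tv-y\|\bigr),
\]
so I must bound each summand below. Two reverse-triangle steps are needed per coordinate: normalizing costs $\|x/\|x\|-x\|=1-\|x\|$ and rescaling costs $\|su-u\|\le 1-s$, whence $\|su-x\|\ge\|u-x/\|x\|\|-(1-\|x\|)-(1-s)>\beta-2+\|x\|+s-\eps'$, and likewise for the other coordinate. Summing, the apparently fatal constant $-4$ is rescued \emph{precisely} by the two structural inequalities $\|x\|+\|y\|\ge 1$ and $s+t\ge 1$, which together contribute $+2$, leaving
\[
\|w-(x,y)\|_N\ >\ \gamma\bigl(2\beta-4+(\|x\|+\|y\|)+(s+t)\bigr)-2\gamma\eps'\ \ge\ 2\gamma(\beta-1)-2\gamma\eps'.
\]
Letting $\eps'\to 0$ over arbitrary slices gives $\dc((x,y))\ge 2\gamma(\beta-1)$. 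I regard identifying this cancellation --- that the losses from normalization and from the ball-rescaling are exactly absorbed by $N\le\|\cdot\|_1$ --- as the main obstacle; the scaling by the norming pair is what forces both the factor $2$ and the $-1$, and explains why the clean bound cannot avoid them.

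Finally I would dispatch the degenerate situations. If, say, $\|x^*\|=0$, the norming identity forces $t=1$, the slice constraint involves only the $Y$-coordinate, and $u$ may be chosen freely with $\|u-x/\|x\|\|>\beta-\eps'$ (possible since $\sup_{u\in B_X}\|u-x/\|x\|\|=2\ge\beta$), so the same estimate goes through verbatim. For $y=0$ one has $x\in S_X$, and running the argument with the $Y$-coordinate used only to enlarge the distance gives $\dc((x,0))\ge\gamma\,\dc(x)\ge 2\gamma(\dc(x)-1)$ since $\dc(x)\le 2$; the case $x=0$ is symmetric.
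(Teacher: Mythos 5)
Your proof is correct and follows essentially the same route as the paper's: both rescale far points taken from the coordinate slices determined by $x^*/\|x^*\|$ and $y^*/\|y^*\|$ by a pair $(s,t)$ on the unit sphere of $(\mathbb{R}^2,N)$ that (nearly) norms $(\|x^*\|,\|y^*\|)$, and both recover the constant $2\gamma$ from the inequalities $\|x\|+\|y\|\ge 1$ and $s+t\ge 1$. The only difference is bookkeeping: the paper bounds $\|x-sz\|\ge(\|x\|+s)\bigl(\dc(x/\|x\|)-1-\eps\bigr)$ via the lemma on $\|\lambda e_1+\mu e_2\|$ and applies $N\ge\gamma\|\cdot\|_1$ last, whereas you use two reverse-triangle steps and apply $N\ge\gamma\|\cdot\|_1$ first; your additive version also yields the slightly sharper bound $\gamma\,\dc(x)$ in the degenerate case $y=0$.
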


\begin{proof}
Assume first that $x\neq 0$ and $y\neq 0$. Assume also $\min \left\{ \dc \bigl( \frac{x}{\|x\|} \bigr) , \dc \bigl( \frac{y}{\|y\|}\bigr) \right\} > 1+\eps > 1$ for some $\eps >0$ (otherwise, there is nothing to prove).
Let $S=S\bigl((x^*,y^*),\delta\bigr)$ be a slice of $B_{X\oplus_N Y}$ with $\|(x^*,y^*)\| =1$. Take $(u,v) \in S_{X\oplus_N Y}$ so that $x^* (u) + y^* (v) > 1 - \delta' > 1-\delta$ for some $0<\delta'<\delta$. Fix $\eta >0$ so that $2\eta < \delta-\delta'$, and consider 
\[
S_1 = \begin{cases}
\bigl\{z \in B_X : x^*(z) > x^* \bigl(\frac{u}{\|u\|}\bigr) - \eta\bigr\} &\text{ when } u \neq 0 \\
B_X &\text{ when } u = 0 
\end{cases} 
\]
and 
\[
S_2 = \begin{cases}
\bigl\{w \in B_Y : y^*(w) > y^* \bigl(\frac{v}{\|v\|}\bigr) - \eta\bigr\} &\text{ when } v \neq 0 \\
B_Y &\text{ when } v = 0.
\end{cases} 
\]
Observe that there exist $z \in S_1$ and $w\in S_2$ such that $\bigl\|z - \frac{x}{\|x\|}\bigr\| > \dc\bigl( \frac{x}{\|x\|}\bigr) - \frac{\eps}{2}$ and $\bigl\|w - \frac{y}{\|y\|}\bigr\| > \dc\bigl( \frac{y}{\|y\|}\bigr) - \frac{\eps}{2}$. 
Note that 
\[
\left\|z - \frac{x}{\|x\|}\right\| > \dc\left( \frac{x}{\|x\|}\right) - \frac{\eps}{2} > (1+\eps) - \frac{\eps}{2} = 1 + \frac{\eps}{2}. 
\]
Then
\[
\bigl\|x - \|u\| z \bigr\| = \left\| \|x\| \frac{x}{\|x\|} + \|u\| (-z) \right\| \geq (\|x\| + \|u\|) \left (\dc\left( \frac{x}{\|x\|}\right) - \frac{\eps}{2} - 1\right) 
\]
where we used the fact: if $e_1, e_2 \in B_X$ satisfy $\|e_1+e_2\|\geq 1+\alpha$ for some $\alpha\in [0,1]$, then $\|\lambda e_1 + \mu e_2\| \geq (\lambda+\mu)\alpha$ for all $\lambda,\mu\geq 0$. Similarly, we obtain
\[
\bigl\|y-\|v\| w\bigr\|  \geq (\|y\| + \|v\|) \left (\dc\left( \frac{y}{\|y\|}\right) - \frac{\eps}{2} - 1\right). 
\]
Note that $(\|u\| z, \|v\| w) \in B_{X\oplus_N Y}$ and if $u \neq 0, v\neq 0$, then 
\begin{align*}
(x^*, y^*) (\|u\| z, \|v\| w) &= \|u\| x^*(z) + \|v\| y^* (w) \\
&>\|u\| \left( x^* \left(\frac{u}{\|u\|} \right)-\eta \right) + \|v\| \left( y^*\left(\frac{v}{\|v\|}\right) -\eta \right) \\
&> 1-\delta' - 2\eta > 1-\delta. 
\end{align*}
If one of $u$ and $v$ is $0$, say $v=0$, then $\|u\|=1$ and 
\begin{align*}
(x^*, y^*) (\|u\| z, \|v\| w) &= x^*(z)  > x^*(u) - \eta > 1-\delta' -\eta > 1-\delta. 
\end{align*}
Thus, in any case, $(\|u\| z, \|v\| w) \in S$. Now, observe that 
\begin{align*}
\| (x,y) &- (\|u\| z, \|v\| w) \|_N \\
&= N\bigl( \|x- \|u\|z \|, \|y - \|v\|w\|\bigr) \\
&\geq N\left( (\|x\| + \|u\|) \left (\dc\left( \frac{x}{\|x\|}\right) - \frac{\eps}{2} - 1\right), (\|y\| + \|v\|) \left (\dc\left( \frac{y}{\|y\|}\right) - \frac{\eps}{2} - 1\right) \right) \\ 
&\geq \min \left\{ \dc\left( \frac{x}{\|x\|}\right) - \frac{\eps}{2} - 1, \dc\left( \frac{y}{\|y\|}\right) - \frac{\eps}{2} - 1 \right\} N\bigl( \|x\|+\|u\|, \|y\|+\|v\|\bigr) \\
&\geq 2\gamma \min \left\{ \dc\left( \frac{x}{\|x\|}\right) - \frac{\eps}{2} - 1, \dc\left( \frac{y}{\|y\|}\right) - \frac{\eps}{2} - 1 \right\}.
\end{align*} 

Next, assume that $x=0$ or $y=0$. We only prove the case when $y=0$. Assume that $\dc(x) > 1+\eps$ for some $\eps >0$, otherwise there is nothing to prove. Let $S=S((x^*,y^*),\delta)$ be a slice of $B_{X\oplus_N Y}$ with $\|(x^*,y^*)\| =1$. Take $(u,v) \in S_{X\oplus_N Y}$ as above with some $0<\delta'<\delta$. Fix $\eta>0$ and consdier $S_1, S_2$ as above too. Find $z \in S_1$ such that $\|z - x \| > \dc(x) - \frac{\eps}{2}$. Let $w \in S_2 \cap S_Y$ be arbitrarily given. Arguing as above, $(\|u\| z, \|v\| w) \in S$. Moreover, 
\begin{align*}
\bigl\|(x,0) - (\|u\| z, \|v\| w) \bigr\|_N &= N\bigl(\|x - \|u\|z\| , \|v\|\bigr) \\
&\geq N \left( (1 + \|u\|)  \left(\dc (x) - \frac{\eps}{2} - 1\right), \|v\|\right) \\
&\geq   \left(\dc (x) - \frac{\eps}{2} - 1\right) N\bigl( 1+\|u\|, \|v\| \bigr) \\
&\geq 2\gamma \left(\dc (x) - \frac{\eps}{2} - 1\right).
\end{align*}  
This finishes the proof.
\end{proof}

When the absolute normalized norm $N$ on $\R^2$ in Proposition \ref{prop:abs_N_ell_1} is actually $\ell_1$-norm, then for $x \neq 0, y \neq 0$, we have 
\[
\dc((x,y)) \geq 2 \left( \min \left\{ \dc \left( \frac{x}{\|x\|} \right) , \dc \left( \frac{y}{\|y\|}\right) \right\} - 1 \right). 
\]
However, in the case of $\ell_1$-norm, the following improved lower bound for $\dc (x,y)$ can be obtained. 

\begin{prop}\label{prop:ell1sum}
Let $X$ and $Y$ be Banach spaces, and $(x,y) \in S_{X \oplus_1 Y}$. Then we have the following:
\begin{enumerate}
\itemsep0.3em
\item[\textup{(a)}] $\dc ((x,y)) \geq \min \left\{ \dc\left( \dfrac{x}{\|x\|}\right), \dc\left( \dfrac{y}{\|y\|}\right) \right\}$ when $x\neq 0$ and $y\neq 0$. 
\item[\textup{(b)}] $\dc ((x,y)) \geq \dc(y)$ when $x=0$.
\item[\textup{(c)}] $\dc ((x,y)) \geq \dc (x)$ when $y = 0$.  
\end{enumerate} 
\end{prop}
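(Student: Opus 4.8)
The plan is to work from the representation $\dc(p)=\inf_{S}\sup_{q\in S}\|p-q\|$ established right after Definition \ref{def:dc_and_dec}, so that it suffices, for an arbitrary slice $S$ of $B_{X\oplus_1 Y}$, to bound $\sup_{(u,v)\in S}\|(x,y)-(u,v)\|$ below by the asserted constant. The decisive structural fact is that the dual of $X\oplus_1 Y$ is $X^*\oplus_\infty Y^*$, so every slice has the form $S=S((x^*,y^*),\delta)$ with $\max\{\|x^*\|,\|y^*\|\}=1$; in particular at least one of the two functionals is norming. This dichotomy organizes the whole argument, and the competitor I feed into the slice will always be supported on a single coordinate, which is exactly what the $\ell_1$-norm rewards.

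First I would treat (a). Fix such a slice and assume $\|x^*\|=1$ (if instead only $\|y^*\|=1$, run the symmetric argument with the roles of $X$ and $Y$ swapped), and set $m:=\min\{\dc(x/\|x\|),\dc(y/\|y\|)\}$. Applying the definition of $\dc(x/\|x\|)\geq m$ to the slice $S(x^*,\delta)$ of $B_X$ produces $z\in B_X$ with $x^*(z)>1-\delta$ and $\|z-x/\|x\|\|>m-\eps$. Then $(z,0)\in S$, and by $\ell_1$-additivity
\[
\|(x,y)-(z,0)\|=\|x-z\|+\|y\|\ \geq\ \Big\|z-\tfrac{x}{\|x\|}\Big\|-\big(1-\|x\|\big)+\|y\|\ >\ m-\eps,
\]
where I use $\|x-x/\|x\|\|=1-\|x\|$ and $\|x\|+\|y\|=1$. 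Letting $\eps\to0$ and ranging over all slices gives $\dc((x,y))\geq m$.

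For (b) and (c) the only new phenomenon is that the norming functional may sit on the vanishing coordinate. Take (c), so $y=0$ and $x\in S_X$. If $\|x^*\|=1$, the computation above with $\|y\|=0$ yields $(z,0)\in S$ and $\|(x,0)-(z,0)\|=\|x-z\|>\dc(x)-\eps$. If instead $\|x^*\|<1$, then $\|y^*\|=1$, and I would choose $v\in B_Y$ with $y^*(v)$ arbitrarily close to $1$; then $(0,v)\in S$ and $\|(x,0)-(0,v)\|=1+\|v\|$ can be pushed arbitrarily close to $2\geq\dc(x)$. In either case the slice-supremum is at least $\dc(x)$, and (b) follows by the symmetric argument.

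I do not expect a serious obstacle here: the entire content is the observation that the $\ell_1$-norm lets one place all of the perturbation in one coordinate, thereby avoiding the ``$-1$'' deficit appearing in the general absolute-sum estimate of Proposition \ref{prop:abs_N_ell_1}. The one point that demands care is the bookkeeping inequality $\|x-z\|\geq\|z-x/\|x\|\|-(1-\|x\|)$ combined with $\|x\|+\|y\|=1$, which is precisely what converts a bound stated relative to the normalized vector $x/\|x\|$ into the clean bound $m-\eps$ relative to $(x,y)$; and the degenerate subcases, where the norming functional lives on the zero coordinate, must be separated out, as there the lower bound comes from the trivial fact that antipodal points of $B_Y$ lie at distance $2$.
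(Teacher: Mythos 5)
Your proposal is correct and follows essentially the same route as the paper: for each slice you use the $\ell_\infty$-structure of the dual to locate a norming coordinate, feed a competitor supported on that single coordinate into the slice, and use the identity $\|x - x/\|x\|\| = 1-\|x\|$ together with $\|x\|+\|y\|=1$ to recover the full lower bound, with the degenerate cases (b), (c) handled by the same dichotomy (antipodal points at distance $2$ when the norming functional sits on the zero coordinate). No gaps.
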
 

\begin{proof} (a): Let $S=S\bigl( (x^*, y^*), \delta\bigr)$ be a slice of $B_{X\oplus_1 Y}$ with $\|(x^*,y^*)\|=1$ and $\eps >0$. Without loss of generality, we may assume that $\|x^*\| = 1$. 
Find $u \in S(x^*, \delta)$ so that $\bigl\|u- \frac{x}{\|x\|}\bigr\| \geq \dc \bigl(\frac{x}{\|x\|} \bigr) - \eps$. Note that $(u,0) \in S$ and 
\begin{align*}
\|(u,0)-(x,y)\|  = \|u-x \| + \|y\| &\geq \left\| u - \frac{x}{\|x\|}\right\| - \left\| \frac{x}{\|x\|} - x \,\right\| + \|y\| \\
&\geq \dc\left(\frac{x}{\|x\|}\right) - \eps - (1-\|x\|) + \|y\| \\
&= \dc\left(\frac{x}{\|x\|}\right) -\eps. 
\end{align*} 
This implies that $\dc ((x,y)) \geq \dc \bigl(\frac{x}{\|x\|}\bigr)$. Notice that if $\|y^*\| = 1$, then, by changing the role of $x$ and $y$, we observe $\dc ((x,y)) \geq \dc \bigl(\frac{y}{\|y\|}\bigr)$. 

As the proofs of (b) and (c) are very similar, we only prove (b): Let $S=S\bigl( (x^*, y^*), \delta\bigr)$ be a slice of $B_{X\oplus_1 Y}$ and $\eps >0$. Suppose that $\|x^*\| = 1$. Then there exists $u \in S_X$ so that $x^*(u)>1-\delta$. In other words, $(u,0) \in S$. In this case, $\|(0,y)-(u,0)\| = \|u\| +\|y\| = 2$. If $\|y^*\| = 1$, then we can pick $v \in S(y^*, \delta)$ so that $\| v-y\| > \dc (y)-\eps$. This implies that $(0,v) \in S$ and $\|(0,y)-(0,v)\| = \|v-y\| > \dc(y)-\eps$. In any case, we obtain a point in $S$ which is a distance of $\dc(y)-\eps$ from $(0,y)$. This proves that $\dc ((0,y))\geq \dc (y)$. 
\end{proof}

If the norm $N$ on $\R^2$ in Proposition \ref{prop:abs_N_ell_1} is $\ell_\infty$-norm, then we may obtain the result on $\dc((x,y))$ for $(x,y)\in S_{X\oplus_\infty Y}$ by taking $\gamma = \frac{1}{2}$. However, the following direct computation produces a better lower bound.

\begin{prop}
Let $X$ and $Y$ be Banach spaces, and $(x,y) \in S_{X \oplus_\infty Y}$. Then we have the following:
\[
\dc ((x,y)) \geq 
\begin{cases}
\dc(x) &\text{when } \|x\|=1, \\
\dc(y) &\text{when } \|y\| =1. 
\end{cases} 
\]
\end{prop}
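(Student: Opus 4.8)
The plan is to use the characterization of $\dc$ as an infimum over slices established in Section~\ref{subsec:def}, namely $\dc((x,y)) = \inf_{S}\sup_{(u,v)\in S}\|(x,y)-(u,v)\|_\infty$ over all slices $S$ of $B_{X\oplus_\infty Y}$, together with the duality $(X\oplus_\infty Y)^*=X^*\oplus_1 Y^*$, so that every slice has the form $S\bigl((x^*,y^*),\delta\bigr)$ with $\|x^*\|+\|y^*\|=1$. By symmetry it suffices to treat the case $\|x\|=1$ and prove $\dc((x,y))\ge\dc(x)$. Since $\|(x,y)-(u,v)\|_\infty\ge\|x-u\|$, the whole argument reduces to producing, for each such slice and each $\eps>0$, a point $(u,v)\in S$ whose first coordinate satisfies $\|x-u\|>\dc(x)-\eps$; then $\|(x,y)-(u,v)\|_\infty\ge\|x-u\|>\dc(x)-\eps$, and letting $S$ and $\eps$ vary gives the claim.

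First I would fix a slice $S=S\bigl((x^*,y^*),\delta\bigr)$ and $\eps>0$, and split according to whether $x^*=0$. When $x^*\neq 0$, I would apply the definition of $\dc(x)$ to the slice $S(x^*/\|x^*\|,\delta)$ of $B_X$ to obtain $u\in B_X$ with $x^*(u)>\|x^*\|(1-\delta)$ and $\|x-u\|>\dc(x)-\eps$. When $x^*=0$ (so $\|y^*\|=1$), the first coordinate is entirely unconstrained in the slice; since $\sup_{u\in B_X}\|x-u\|=1+\|x\|\ge\dc(x)$ (the general upper bound for $\dc$ from Section~\ref{subsec:def}), I can still choose $u\in B_X$ with $\|x-u\|>\dc(x)-\eps$.

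Next I would choose the second coordinate $v$: if $y^*\neq 0$, pick $v\in B_Y$ with $y^*(v)>\|y^*\|(1-\delta)$, which is possible because $\sup_{v\in B_Y}y^*(v)=\|y^*\|$; and if $y^*=0$, simply take $v=y$. The one point requiring verification is that $(u,v)\in S$, that is, $x^*(u)+y^*(v)>1-\delta$: when both functionals are nonzero this follows by adding the two coordinate estimates and using $\|x^*\|+\|y^*\|=1$, giving $x^*(u)+y^*(v)>(\|x^*\|+\|y^*\|)(1-\delta)=1-\delta$, while the degenerate situations are handled separately ($x^*=0$ forces $y^*(v)>1-\delta$ with $x^*(u)=0$; $y^*=0$ forces $\|x^*\|=1$ and $x^*(u)>1-\delta$ with $y^*(v)=0$). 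Since $u\in B_X$ and $v\in B_Y$, we have $(u,v)\in B_{X\oplus_\infty Y}\cap S$, completing the construction.

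The main obstacle—indeed essentially the only delicate point—is the bookkeeping in the degenerate cases $x^*=0$ and $y^*=0$, where the clean "add the two coordinate slices" heuristic breaks down and one must instead exploit that one coordinate is completely free. These are exactly the places where care is needed to confirm that $(u,v)$ genuinely lies in the slice while still achieving $\|x-u\|>\dc(x)-\eps$; everything else is the same routine combination of coordinatewise Daugavet estimates already used in Proposition~\ref{prop:ell1sum}.
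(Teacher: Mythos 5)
Your proposal is correct and follows essentially the same route as the paper's proof: fix a slice $S\bigl((x^*,y^*),\delta\bigr)$ with $\|x^*\|+\|y^*\|=1$, pick the first coordinate from $S(x^*/\|x^*\|,\delta)$ using the definition of $\dc(x)$ when $x^*\neq 0$, pick the second coordinate to make the functional value exceed $1-\delta$, and handle the degenerate cases separately. The only cosmetic difference is in the case $x^*=0$, where the paper takes $u=-x$ explicitly (giving distance $2\geq\dc(x)$) while you invoke $\sup_{u\in B_X}\|x-u\|=1+\|x\|\geq\dc(x)$; both are valid.
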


\begin{proof}
We only prove the case when $\|x\|=1$: Let $S=S( (x^*, y^*), \delta)$ be a slice of $B_{X\oplus_\infty, Y}$ with $\|(x^*, y^*)\| =1$ and $\eps >0$. Note that $\|x^*\| +\|y^*\| =1$. If $x^* \neq 0$, then find $u \in S\bigl(\frac{x^*}{\|x^*\|}, \delta\bigr)$ such that $\|u-x\| > \dc(x)-\eps$. Pick $v \in S\bigl(\frac{y^*}{\|y^*\|}, \delta\bigr)$ if $y^* \neq 0$ and $v \in B_Y$ arbitrarily if $y^* =0$. In any case, $(x^*,y^*) (u,v) = x^*(u) +y^*(v) > 1-\delta$, that is, $(u,v) \in S$. Note that 
\[
\|(x,y)-(u,v)\| = \max \bigl\{\|x-u\| ,\|y-v\|\bigr\} > \dc(x)-\eps. 
\]
It remains to consider the case when $x^* = 0$. In this case, $\|y^*\| =1$. Then $(u,v) \in S$ for any $u \in B_X$ and $v \in S(y^*, \delta)$. Note that 
\[
\|(x,y)-(-x,v)\| = \max \bigl\{\|2x\| , \|y-v\| \bigr\} =2. 
\]
Consequently, whether $x^*$ is $0$ or not, we can always find a point in $S$ which is ($\dc(x)-\eps$)-away from $(x,y)$. So, $\dc((x,y)) \geq \dc(x)$. 
\end{proof} 

In \cite{HPV}, the authors generalized the notion of \textit{positively octahedrality} \cite{HLN} of an absolute norm on $X$ by introducing the so-called \textit{$A$-octahedrality} for a subset $A$ of $S_X$. With this notion, they were able to complete describing the class of absolute norms that admit a Daugavet point. 
Following their ideas, we will provide a lower bound for the Daugavet constant of a point lying in an absolute sum $X\oplus_N Y$ given that the absolute norm $N$ is $A$-OH.

\begin{definition}
Let $X$ be a Banach space and $A \subseteq S_X$. We say the norm on $X$ is \textit{$A$-octahedral} (for short, $A$-OH) if for every $x_1,\ldots, x_n \in A$ and $\eps >0$, there exists $y \in S_X$ such that $\|x_i + y \| \geq 2-\eps$ for every $i=1,\ldots, n$. 
\end{definition}

Let $N$ be an absolute normalized norm on $\R^2$. Put $c = \max \{s : N(s,1)=1\}$ and $d = \max \{ t: N(1,t) = 1 \}$, and define $A = \{(c,1), (1,d)\}$. We will consider from now on this specific set $A$. If an absolute norm $N$ is $A$-\textup{OH}, then, by definition, we can find $(a,b) \in \mathbb{R}^2$ with $N(a,b) =1$, $a,b \geq 0$ such that 
\begin{equation}\label{eq:OHab}
N\bigl((c,1)+(a,b)\bigr)=2 \quad \text{and} \quad N\bigl((1,d)+(a,b)\bigr) = 2. \tag{$\star$}
\end{equation}

\begin{prop}\label{prop:A-OH}
Let $X$ and $Y$ be Banach spaces, $x \in S_X$, $y \in S_Y$, and let $N$ be an $A$-\textup{OH} norm with $(a,b)$ as in \eqref{eq:OHab}. Then $(ax,by) \in S_{X \oplus_N Y}$ satisfies that 
\[
\dc ((ax,by)) \geq 2 \bigl( \min \bigl\{\dc(x), \dc(y)\bigr\} -1\bigr). 
\]
\end{prop}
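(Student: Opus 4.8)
The plan is to mimic the proof of Proposition \ref{prop:abs_N_ell_1} but to replace the crude estimate $N \geq \gamma\|\cdot\|_1$ by an optimal choice of coefficients dictated by the octahedral direction $(a,b)$. Set $m := \min\{\dc(x),\dc(y)\}$; if $m \leq 1$ there is nothing to prove since $\dc((ax,by)) \geq 0$, so assume $m > 1$, fix $0 < \eps < m-1$, and write $\theta := m - 1 - \eps \in (0,1)$. Given a slice $S = S((x^*,y^*),\delta)$ of $B_{X\oplus_N Y}$ with $N^*(\|x^*\|,\|y^*\|) = 1$, I must produce a point of $S$ at distance larger than $2\theta$ from $(ax,by)$; letting $\eps,\delta \to 0$ then yields $\dc((ax,by)) \geq 2(m-1)$.

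First I would extract far points in each coordinate. Put $\sigma := \|x^*\|$ and $\tau := \|y^*\|$. Using $\dc(x) \geq m$ on the slice $S(x^*/\sigma,\delta')$ of $B_X$ (and, in the degenerate case $\sigma = 0$, simply taking $z = -x$), I obtain $z \in B_X$ with $x^*(z) > \sigma(1-\delta')$ and $\|z - x\| > m - \eps = 1 + \theta$; symmetrically I get $w \in B_Y$ with $y^*(w) > \tau(1-\delta')$ and $\|w - y\| > 1 + \theta$. Applying the elementary fact recalled in the proof of Proposition \ref{prop:abs_N_ell_1} (if $e_1,e_2 \in B_X$ satisfy $\|e_1 + e_2\| \geq 1 + \theta$ then $\|\lambda e_1 + \mu e_2\| \geq (\lambda+\mu)\theta$ for $\lambda,\mu\geq 0$) to the pairs $(z,-x)$ and $(w,-y)$, I get, for all $\lambda,\rho \geq 0$, the estimates $\|ax - \lambda z\| \geq (\lambda + a)\theta$ and $\|by - \rho w\| \geq (\rho + b)\theta$.

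The heart of the argument is the choice of the scalars $(\lambda,\rho)$: I want a point on the unit sphere of $(\R^2,N)$, with $\lambda,\rho \geq 0$, that is \emph{norming} for $(\sigma,\tau)$, i.e. $\sigma\lambda + \tau\rho = N^*(\sigma,\tau) = 1$, and that additionally satisfies $N((\lambda,\rho) + (a,b)) = 2$. By \eqref{eq:OHab} the segments $[(c,1),(a,b)]$ and $[(a,b),(1,d)]$ are faces of $B_N$, and together they form exactly the portion $G$ of $S_N$ lying (in the first quadrant) between the corners $(c,1)$ and $(1,d)$; every point of $G$ therefore shares a maximal face of $B_N$ with $(a,b)$, whence $N((\lambda,\rho)+(a,b)) = 2$ for all $(\lambda,\rho)\in G$. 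The rest of the first quadrant of $S_N$ consists of the flat segments $[(0,1),(c,1)]$ and $[(1,d),(1,0)]$, on which the functional $(\lambda,\rho)\mapsto \sigma\lambda+\tau\rho$ (with $\sigma,\tau\geq 0$) is maximized at the corners $(c,1)$ and $(1,d)$ respectively. Hence the maximum of this functional over the whole first quadrant of $S_N$, which equals $N^*(\sigma,\tau)=1$, is attained at some $(\lambda,\rho)\in G$, and this is the point I select: it is norming and good.

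With this choice set $(u,v) := (\lambda z, \rho w)$. Then $N(\|u\|,\|v\|) = N(\lambda\|z\|,\rho\|w\|) \leq N(\lambda,\rho) = 1$ by monotonicity of $N$, so $(u,v) \in B_{X\oplus_N Y}$, while $x^*(u) + y^*(v) = \lambda x^*(z) + \rho y^*(w) > (\sigma\lambda+\tau\rho)(1-\delta') = 1 - \delta' > 1 - \delta$ shows $(u,v) \in S$. Using monotonicity and homogeneity of $N$ together with the coordinate estimates,
\[
\|(ax,by) - (u,v)\|_N = N\big(\|ax-\lambda z\|,\|by-\rho w\|\big) \geq N\big((\lambda+a)\theta,(\rho+b)\theta\big) = \theta\, N\big((\lambda,\rho)+(a,b)\big) = 2\theta,
\]
and as $\eps,\delta$ were arbitrary this gives $\dc((ax,by)) \geq 2(m-1)$. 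I expect the main obstacle to be precisely the geometric selection of $(\lambda,\rho)$ — making rigorous that a norming point always lies on the good arc $G$ and invoking \eqref{eq:OHab} at the two corners — together with the bookkeeping of the degenerate cases $\sigma = 0$ or $\tau = 0$, where one coordinate slice collapses and $z$ (or $w$) must be taken directly as $-x$ (or $-y$).
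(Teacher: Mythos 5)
Your proposal is correct and follows essentially the same route as the paper: extract far points in each coordinate from $\dc(x)$ and $\dc(y)$, scale them by a pair $(k,\ell)\in S_N$ that norms $(\|x^*\|,\|y^*\|)$ and satisfies $N\bigl((a,b)+(k,\ell)\bigr)=2$, and conclude via the elementary estimate $\|\lambda e_1+\mu e_2\|\geq(\lambda+\mu)\alpha$ together with monotonicity and homogeneity of $N$. The only difference is that you supply the geometric justification (via the faces $[(c,1),(a,b)]$ and $[(a,b),(1,d)]$ of $B_N$) for the existence of such a norming pair, a step the paper simply asserts.
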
 

\begin{proof}
Let $(x^*, y^*) \in S_{(X\oplus_N Y)^*}, \alpha >0$ and $\eps >0$. Choose $\delta >0$ so that $\delta N(1,1) < \eps$. Find $u \in B_X$ and $v \in B_Y$ such that 
\[
x^* (u) \geq \left( 1- \frac{\alpha}{2}\right) \|x^*\| \quad \text{and} \quad y^*(v) \geq \left( 1-\frac{\alpha}{2}\right) \|y^*\|, 
\]
$\|x-u\| > \dc(x)-\delta$ and $\|y-v\| > \dc(y)-\delta$. Find $k \geq 0, \ell \geq 0$ such that $N(k,\ell)=1$, $N\bigl((a,b)+(k,\ell)\bigr)=2$, and $k\|x^*\| + \ell \|y^*\| = 1$. Note that 
\[
(x^*,y^*)(ku,\ell v) = kx^*(u) +\ell y^*(v) \geq \left(1-\frac{\alpha}{2}\right) (k\|x^*\| +\ell \|y^*\| ) = 1-\frac{\alpha}{2},
\]
which implies that $(ku, \ell v) \in S( (x^*,y^*), \alpha)$. By triangle inequality, we obtain $\|ax-ku\| \geq (a+k)(\dc(x)-\delta-1)$ and $\|by-\ell v\| \geq (b+\ell) (\dc(y) -\delta-1)$. Thus
\begin{align*}
\| (ax,by) - (ku,\ell v)\|_N &= N\bigl(\|ax-ku\|, \|by- \ell v\| \bigr) \\
&\geq N\bigl( (a+k)(\dc (x)-\delta-1), (b+\ell)(\dc(y)-\delta-1) \bigr) \\
&\geq N\bigl( (a+k)(\dc (x)-1), (b+\ell)(\dc(y)-1) \bigr) - 2 \eps \\
&\geq \min \bigl\{ \dc(x)-1, \dc(y)-1 \bigr\} N(a+k, b+\ell) - 2 \eps \\
&=2 \min \bigl\{ \dc(x)-1, \dc(y)-1 \bigr\}  - 2 \eps,
\end{align*}
as desired.
\end{proof} 

Arguing very similarly, we can obtain the following result in the special case when $a=0$ or $b=0$. 

\begin{prop}\label{prop:A-OH2}
Let $X$ and $Y$ be Banach spaces, $x \in S_X$, $y \in S_Y$, and let $N$ be an $A$-\textup{OH} norm with $(a,b)$ as in \eqref{eq:OHab}. 
\begin{enumerate}
\itemsep0.3em
\item[\textup{(a)}] If $a=0$, then $\dc((0,y)) \geq 2(\dc(y) -1)$.
\item[\textup{(b)}] If $b=0$, then $\dc((x,0))  \geq 2 (\dc (x)-1)$. 
\end{enumerate} 
\end{prop}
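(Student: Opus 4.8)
The plan is to run the argument of Proposition \ref{prop:A-OH} almost verbatim, with one structural change that removes the dependence on $\dc(x)$ (respectively $\dc(y)$). I treat (a); part (b) is entirely symmetric upon interchanging the two summands. First I record that the hypothesis $a=0$ forces $b=1$: since $N$ is normalized and absolute, $N(0,b)=b\,N(0,1)=b$ for $b\geq 0$, so $N(a,b)=1$ gives $b=1$, and the point under consideration is genuinely $(0,y)$. I may also assume $\dc(y)>1$, since otherwise $2(\dc(y)-1)\leq 0\leq\dc((0,y))$ and there is nothing to prove.

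Now fix a slice $S\bigl((x^*,y^*),\alpha\bigr)$ of $B_{X\oplus_N Y}$ with $\|(x^*,y^*)\|=1$ and let $\eps>0$; choose $\delta>0$ small enough that $\delta N(1,1)<\eps$ and $\dc(y)-\delta-1>0$. As in Proposition \ref{prop:A-OH}, I produce $k,\ell\geq 0$ with $N(k,\ell)=1$, $N\bigl((0,1)+(k,\ell)\bigr)=N(k,1+\ell)=2$ and $k\|x^*\|+\ell\|y^*\|=1$; this is exactly the selection used there with $(a,b)=(0,1)$, so its existence is inherited from that proposition. The key departure is the choice of the $X$-vector: rather than picking $u$ far from $x$, I take $u\in S_X$ to be merely a norming vector, $x^*(u)\geq(1-\tfrac{\alpha}{2})\|x^*\|$, while on the $Y$-side I keep the original choice $v\in B_Y$ with $y^*(v)\geq(1-\tfrac{\alpha}{2})\|y^*\|$ and $\|y-v\|>\dc(y)-\delta$. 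Then $(ku,\ell v)\in B_{X\oplus_N Y}$ (as $N(k,\ell\|v\|)\leq N(k,\ell)=1$) and lies in $S$, since $k x^*(u)+\ell y^*(v)\geq(1-\tfrac{\alpha}{2})\bigl(k\|x^*\|+\ell\|y^*\|\bigr)=1-\tfrac{\alpha}{2}>1-\alpha$.

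The decisive point, and the only genuine obstacle, is to show that the $X$-coordinate still contributes as if its constant were $\dc(y)-1$, so that no minimum with $\dc(x)$ appears. Because $\|u\|=1$ and $\dc(y)-1\leq 1$, one has $\|0-ku\|=k\geq k(\dc(y)-\delta-1)=(a+k)(\dc(y)-\delta-1)$, while the auxiliary fact recorded in the proof of Proposition \ref{prop:ell1sum} (applied to $y$ and $-v$) gives $\|y-\ell v\|\geq(1+\ell)(\dc(y)-\delta-1)=(b+\ell)(\dc(y)-\delta-1)$. Monotonicity and homogeneity of $N$, together with $N(k,1+\ell)=2$, then yield
\[
\bigl\|(0,y)-(ku,\ell v)\bigr\|_N=N\bigl(k,\|y-\ell v\|\bigr)\geq(\dc(y)-\delta-1)\,N(k,1+\ell)=2(\dc(y)-\delta-1).
\]
Letting $\delta\to 0$ and using that the slice and $\eps$ were arbitrary gives $\dc((0,y))\geq 2(\dc(y)-1)$.

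I expect the existence of the pair $(k,\ell)$ to be the one imported ingredient, carried over unchanged from Proposition \ref{prop:A-OH}; once it is in hand, the norm-one choice of $u$ is precisely what severs the estimate from $\dc(x)$. This is also the reason the special case deserves a separate statement: applying Proposition \ref{prop:A-OH} directly to $(0,y)$ would only return $2\min\{\dc(x),\dc(y)\}-2$, which is genuinely weaker whenever $X$ carries points of small Daugavet constant, whereas the present argument recovers the sharp factor $2(\dc(y)-1)$.
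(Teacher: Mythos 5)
Your argument is correct and is exactly the ``very similar argument'' the paper has in mind (it gives no separate proof of this proposition, only the remark that one argues as in Proposition \ref{prop:A-OH}): you run that proof with $(a,b)=(0,1)$, replace the choice of $u$ by a unit near-norming vector so that the $X$-coordinate contributes $k\geq k(\dc(y)-\delta-1)$, and keep the estimate $\|y-\ell v\|\geq(1+\ell)(\dc(y)-\delta-1)$, which together with $N(k,1+\ell)=2$ yields the claimed bound. The only slip is bibliographic: the auxiliary inequality you invoke is recorded in the proof of Proposition \ref{prop:abs_N_ell_1}, not of Proposition \ref{prop:ell1sum}.
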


\begin{rem} Note that $\ell_1$- and $\ell_\infty$-norm are $A$-\textup{OH}. 
For instance, if we consider $\ell_1$-norm on $\R^2$, then the corresponding set $A$ turns out to be $\{ (0,1), (1,0)\}$. In this case, any point $(a,b) \in \R^2$ with $a+b=1, a,b \geq 0$ satisfies \eqref{eq:OHab}. Given $(x,y) \in S_{X\oplus_1 Y}$, writing $(x,y) = \bigl(\|x\|\frac{x}{\|x\|}, \|y\|\frac{y}{\|y\|}\bigr)$ if $x\neq0, y\neq 0$, Proposition \ref{prop:A-OH} and \ref{prop:A-OH2} show that 
\[
\dc((x,y)) \geq \begin{cases}
2 \left( \min \left\{ \dc \left( \dfrac{x}{\|x\|} \right) , \dc \left( \dfrac{y}{\|y\|}\right) \right\} - 1 \right) &\text{when } x \neq 0, y \neq 0. \\
2 (\dc(x) - 1) &\text{when } y =0.\\
2( \dc(y)-1) &\text{when } x=0.
\end{cases} 
\] 
%However, observe that it is not as sharp as Proposition \ref{prop:ell1sum}. 

In the case of $\ell_\infty$-norm on $\R^2$, given $(x,y) \in S_{X\oplus_\infty Y}$, one can obtain a similar lower bound for $\dc ((x,y))$. 
\end{rem}

We finish this subsection by presenting the following result which gives an upper bound for $\dc(x,y)$. 
It can be obtained by simply arguing as in \cite[Proposition 2.4]{HLLNR}, so we omit the proof. 

\begin{prop}\label{prop:upperbound}
Let $X$ and $Y$ be Banach spaces, $N$ be an absolute normalized norm on $\mathbb{R}^2$, and $\Gamma>0$ is such that $N(\cdot)\leq \Gamma \|\cdot\|_\infty$. Then we have the following: 
\begin{enumerate}
\itemsep0.3em
\item[\textup{(a)}] If $(0,1)$ is an extreme point of $B_{(\R^2, N)}$, then $\dc( (x,0)) \leq \Gamma$ for every $x \in S_X$.
\item[\textup{(b)}] If $(1,0)$ is an extreme point of $B_{(\R^2, N)}$, then $\dc( (0,y)) \leq \Gamma$ for every $y \in S_Y$. 
\end{enumerate} 
In particular, for $1<p<\infty$, $\dc((x,0)) \leq 2^{1/p}$ and $\dc ((0,y))\leq 2^{1/p}$ whenever $(x,0) \in S_{X \oplus_p Y}$ and $(0,y) \in S_{X \oplus_p Y}$.
\end{prop}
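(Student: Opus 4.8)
The plan is to use the slice characterization
$$\dc((x,0)) = \inf_{S} \sup_{(u,v) \in S} \|(x,0)-(u,v)\|_N$$
recorded in Section \ref{subsec:def}, and to exhibit, for each $\delta>0$, a single slice $S$ on which the supremum is at most $\Gamma + D(\delta)$ with $D(\delta)\to 0$; this forces $\dc((x,0)) \le \Gamma$. I shall only treat \textup{(a)}, since \textup{(b)} follows by interchanging the roles of $X$ and $Y$, and the final assertion is then immediate because for $1<p<\infty$ the points $(0,1)$ and $(1,0)$ are extreme points of $B_{(\R^2,\|\cdot\|_p)}$ and one may take $\Gamma = 2^{1/p}=\|(1,1)\|_p$.

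First I would record the geometric meaning of the hypothesis. Since every absolute norm is monotone, $N(0,t)\le N(s,t)$, so the second coordinate attains its maximum $1$ over $B_{(\R^2,N)}$ exactly along $\{(s,1): N(s,1)=1\}$; the point $(0,1)$ being extreme is equivalent to $c=\max\{s: N(s,1)=1\}=0$, that is, to $(0,1)$ being the \emph{unique} such maximizer. A routine compactness argument in $\R^2$ then shows that
$$D(\delta) := \sup\{ |s| : (s,t) \in B_{(\R^2,N)},\ t > 1-\delta\} \longrightarrow 0 \quad\text{as}\quad \delta \to 0^{+},$$
since any sequence witnessing $D(\delta_n)\ge c_0>0$ with $\delta_n\to 0$ would subconverge to some $(s_0,1)\in B_{(\R^2,N)}$ with $|s_0|\ge c_0$, contradicting uniqueness.

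Next I would build the slice. Fix $y^* \in S_{Y^*}$ and consider the functional $(0,y^*)$ on $X \oplus_N Y$; its dual norm equals $N^*(0,1)=1$ by the computation above, so $(0,y^*)\in S_{(X\oplus_N Y)^*}$. For $\delta>0$ put $S = S((0,y^*),\delta)$. If $(u,v) \in S$, then $\|v\| \ge y^*(v) > 1-\delta$ while $N(\|u\|,\|v\|)\le 1$ and $\|v\|\le 1$ (again by monotonicity), so $(\|u\|,\|v\|)$ lies in the thin slice controlled above and $\|u\| \le D(\delta)$. Using $\|x-u\|\le 1+\|u\|\le 1+D(\delta)$, $\|v\|\le 1$, together with monotonicity, subadditivity, and normalization of $N$, I would estimate
$$\|(x,0)-(u,v)\|_N = N(\|x-u\|,\|v\|) \le N(1+D(\delta),1) \le N(1,1)+D(\delta) \le \Gamma + D(\delta),$$
where the last step uses $N(1,1)\le \Gamma\|(1,1)\|_\infty=\Gamma$. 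As this is uniform over $(u,v)\in S$ and $D(\delta)\to 0$, letting $\delta\to 0$ yields $\dc((x,0))\le \Gamma$.

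I expect the only delicate point to be the passage from ``$(0,1)$ extreme'' to the quantitative decay $D(\delta)\to 0$, i.e. upgrading extremality to the denting-type behavior of the vertical slices; this is precisely where finite-dimensionality (compactness of $B_{(\R^2,N)}$) enters, and it is harmless here exactly because $(0,1)$ maximizes a coordinate functional, so no non-exposed extreme point can interfere. The remaining ingredients, namely $\|(0,y^*)\|_{N^*}=1$ and the monotonicity/subadditivity estimate, are routine for absolute normalized norms.
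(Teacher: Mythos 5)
Your argument is correct and is essentially the proof the paper has in mind: the paper omits the details by pointing to \cite[Proposition 2.4]{HLLNR}, whose argument is exactly your construction, namely slicing $B_{X\oplus_N Y}$ with a norm-one functional $(0,y^*)$ supported on the $Y$-coordinate and using the extremality of $(0,1)$ (via compactness of $B_{(\R^2,N)}$) to force the $X$-components of points in the slice to be uniformly small. The quantitative decay $D(\delta)\to 0$ and the estimate $N(1+D(\delta),1)\le N(1,1)+D(\delta)\le \Gamma+D(\delta)$ are exactly the right ingredients, so the proposal is complete.
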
 

As a consequence of Proposition \ref{prop:upperbound} combined with Proposition \ref{prop:abs_N_ell_1}, we have that for $1<p<\infty$, $x \in S_X$ and $y \in S_Y$
\[
2^{1/p} (\dc (x)-1) \leq \dc ((x,0)) \leq 2^{1/p} \,\text{ and }\, 2^{1/p} (\dc (y)-1) \leq \dc ((0,y)) \leq 2^{1/p}. 
\]

\subsection{Stability for the $\Delta$-constant}

In the proof of Proposition \ref{prop:abs_N_ell_1}, if we start with a slice $S$ containing the given point $(x,y)$, then we may take $(u,v) = (x,y)$ and proceed as before. Thus, a similar result with respect to $\Delta$-constants can be obtained. However, we will see that even stronger claim holds in this case.

%\begin{prop}\label{prop:abs_N_ell_1_delta}
%Let $X$ and $Y$ be Banach spaces, $N$ an absolute normalized norm on $\mathbb{R}^2$, and $\gamma>0$ is such that $N(\cdot) \geq \gamma \|\cdot\|_1$. Then we have the following: For $(x,y) \in S_{X \oplus_N Y}$, 
%\[
%\dec ((x,y)) \geq 
%\begin{cases}
%2\gamma \left( \min \left\{ \dec \left( \dfrac{x}{\|x\|} \right) , \dec \left( \dfrac{y}{\|y\|}\right) \right\} - 1 \right) &\text{when } x \neq 0, y \neq 0; \\
%2\gamma (\dec(x) - 1) &\text{when } y =0; \\
%2\gamma( \dec(y)-1) &\text{when } x=0.
%\end{cases} 
%\]
%\end{prop}

Recall that the diametral local diameter two property (for short, DLD2P) behaves well under absolute sums, that is, for any absolute normalized norm $N$ on $\R^2$, both $X$ and $Y$ have the DLD2P if and only if $X \oplus_N$ has the DLD2P \cite[Theorem 3.2]{IK}. 
In this regard, it is natural to expect some stability result of the $\Delta$-constant. 

\begin{prop}\label{prop:absolute-dec}
Let $X$ and $Y$ be Banach spaces, $N$ an absolute normalized norm on $\mathbb{R}^2$, and $a,b\geq 0$ with $N(a,b) = 1$. Given $x \in B_X$ and $y \in B_Y$, we have 
\[
\dec ((ax,by)) \geq \min \bigl\{ \dec(x), \dec(y)\bigr\}. 
\]
Moreover, we have the following:
\begin{enumerate}
\itemsep0.3em
\item[\textup{(a)}] If $b=0$, then $\dec((x,0))  \geq \dec(x)$.
\item[\textup{(b)}] If $a=0$, then $\dec((0,y)) \geq \dec(y)$. 
\end{enumerate} 
\end{prop}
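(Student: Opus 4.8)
The plan is to use the slice characterization $\dec(z) = \inf\{\sup_{w \in S}\|z-w\| : S \text{ a slice of } B_Z \text{ with } z \in S\}$ established earlier. Writing $m := \min\{\dec(x), \dec(y)\}$ and $(x_0, y_0) := (ax, by)$ (which lies in $B_{X \oplus_N Y}$ because $N(a\|x\|, b\|y\|) \le N(a,b) = 1$), it suffices to show that for every slice $S = S((x^*, y^*), \delta)$ of $B_{X\oplus_N Y}$ containing $(x_0, y_0)$ and every $\eps > 0$, there is $w \in S$ with $\|(x_0,y_0) - w\|_N \ge m - \eps$. I would produce $w$ of the form $(au, bv)$ with $u \in B_X$ far from $x$ and $v \in B_Y$ far from $y$; the point of moving \emph{both} coordinates is that, by monotonicity and homogeneity of the absolute norm, $N(a\|x-u\|, b\|y-v\|) \ge N(a(m-\eps), b(m-\eps)) = (m-\eps)N(a,b) = m-\eps$ as soon as $\|x - u\|, \|y-v\| \ge m - \eps$. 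Since $\dec(x), \dec(y) \ge m$, it is then enough to find admissible $u, v$ inside $S$ with $\|x-u\| > \dec(x) - \eps$ and $\|y - v\| > \dec(y) - \eps$.

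To guarantee membership in $S$, I would first record the margin $\delta_0 := 1 - (a x^*(x) + b y^*(y))$, which satisfies $0 \le \delta_0 < \delta$: nonnegativity comes from $a x^*(x) + b y^*(y) = (x^*,y^*)(x_0,y_0) \le 1$, while $\delta_0 < \delta$ is exactly the hypothesis $(x_0,y_0) \in S$. If $x^* \ne 0$, the set $\{u \in B_X : x^*(u) > x^*(x) - \mu_X\}$ is a slice of $B_X$ containing $x$, so the infimum characterization of $\dec(x)$ yields $u$ in it with $\|x-u\| > \dec(x) - \eps$; if $x^* = 0$ I instead pick any $u \in B_X$ with $\|x-u\| > \dec(x)-\eps$, which exists because $\sup_{u\in B_X}\|x-u\| = 1 + \|x\| \ge \dec(x)$, imposing no constraint. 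Constructing $v$ symmetrically and choosing $\mu_X, \mu_Y > 0$ so small that $a\mu_X + b\mu_Y < \delta - \delta_0$, one gets $a x^*(u) + b y^*(v) > (ax^*(x) + b y^*(y)) - (a\mu_X + b\mu_Y) > (1-\delta_0) - (\delta - \delta_0) = 1 - \delta$, so $(au, bv) \in S$. Combined with the norm estimate above this gives $\|(x_0,y_0) - (au,bv)\|_N \ge m - \eps$, and letting $\eps \to 0$ proves the main inequality.

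For (a) and (b) the scheme simplifies. If $b = 0$ then $N(a,0) = 1$ forces $a = 1$, so the point is $(x,0)$ and a slice $S((x^*,y^*),\delta)$ containing it satisfies $x^*(x) > 1-\delta$; here I would perturb only the first coordinate, replacing $(x,0)$ by $(u,0)$ with $u$ in the slice $\{u \in B_X : x^*(u) > 1-\delta\}$ of $B_X$, which is a genuine slice containing $x$ since $\|x^*\| \ge x^*(x) > 1-\delta$. Because $N(1,0) = 1$ there is no loss of scale, i.e.\ $\|(x,0) - (u,0)\|_N = \|x-u\|$, so choosing $\|x-u\| > \dec(x)-\eps$ gives $\dec((x,0)) \ge \dec(x)$; case (b) is symmetric. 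The main technical point throughout is the simultaneous bookkeeping that keeps $(au,bv)$ inside $S$ while both coordinates are pushed far from $(x,y)$ — handled by the strictly positive margin $\delta - \delta_0$ — together with the degenerate situation in which one of $x^*, y^*$ vanishes, where the corresponding coordinate may be moved without any slice constraint.
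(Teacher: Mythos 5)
Your proof is correct, but it takes a different route from the paper's. The paper proves the inequality through the convex-combination characterization of Proposition \ref{prop:dc-characterization}(b): it picks finite convex combinations $\frac{1}{m}\sum_i x_i \approx x$ and $\frac{1}{m}\sum_i y_i \approx y$ with each $x_i \in \Delta_{2-\dec(x)+\eps}(x)$ and $y_i \in \Delta_{2-\dec(y)+\eps}(y)$ (invoking a lemma from \cite{AHLP} to equalize the number of terms in the two combinations), and then observes that $\frac{1}{m}\sum_i (ax_i,by_i)$ approximates $(ax,by)$ while each $(ax_i,by_i)$ is at distance at least $N\bigl(a(m-\eps),b(m-\eps)\bigr)=m-\eps$ from it. You instead work primally with the $\inf$--$\sup$ slice formula: you restrict a slice of $B_{X\oplus_N Y}$ containing $(ax,by)$ to coordinate slices of $B_X$ and $B_Y$ containing $x$ and $y$, and push both coordinates far simultaneously while keeping $(au,bv)$ in the original slice via the margin $\delta-\delta_0$. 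Both arguments hinge on the same monotonicity-plus-homogeneity estimate $N\bigl(a\|x-u\|,b\|y-v\|\bigr)\geq (m-\eps)N(a,b)$, so neither dominates conceptually; the paper's version avoids the case analysis on whether $x^*$ or $y^*$ vanishes (and the verification that the coordinate sets are genuine normalized slices containing $x$, resp.\ $y$), at the price of importing the external lemma equalizing the lengths of the convex combinations, whereas yours is self-contained but requires that bookkeeping. Your reduction $a=1$ when $b=0$ (from $N(a,0)=|a|$) and the symmetric case are also fine and match what the paper gets by specializing its argument.
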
 

\begin{proof}
Let $\eps >0$ and $0<\eta<\eps$. By Proposition \ref{prop:dc-characterization}, we can find $x_1, \ldots, x_m \in \Delta_{2-\dec(x) +\eps} (x)$ and $y_1,\ldots, y_m \in \Delta_{2-\dec(y)+\eps} (y)$ such that 
\[
\left\| x - \frac{1}{m} \sum_{i=1}^m x_i \right\| < \eta \, \text{ and } \, \left\| y - \frac{1}{m} \sum_{i=1}^m y_i \right\| < \eta,
\]
where we applied \cite[Lemma 4.1]{AHLP} to choose the same number of vectors in $X$ and $Y$. 
Note that 
\[
\left\| (ax,by) - \frac{1}{m} \sum_{i=1}^m (ax_i, by_i) \right\|_N = N \left( a \left\|x- \frac{1}{m} \sum_{i=1}^m x_i\right\| , b \left \| y- \frac{1}{m} \sum_{i=1}^m y_i\right\| \right) \leq \eta. 
\]
Moreover, for each $i =1,\ldots, m$, 
\[
\bigl\| (ax,by)-(ax_i,by_i) \bigr\|_N = N\bigl(a \|x-x_i\|, b\|y-y_i\| \bigr) \geq \min \bigl\{ \dec(x)-\eps, \dec(y)-\eps \bigr\}. 
\]
Thus, by again Proposition \ref{prop:dc-characterization}, we conclude that $\dec((ax,by)) \geq \min \bigl\{ \dec(x), \dec(y)\bigr\}$.

When $a=0$ or $b=0$, the above argument proves the estimates (a) and (b). 
\end{proof}

In case of $\ell_1$-sum or $\ell_\infty$-sum, we have the following direct consequences.

\begin{cor}\label{cor:ell_1-sum}
Let $X$ and $Y$ be Banach spaces. Then we have the following: 
\begin{enumerate}
\itemsep0.3em
\item[\textup{(a)}] For $(x,y) \in S_{X\oplus_1 Y}$, 
\[
\dec ((x,y)) \geq 
\begin{cases}
 \min \left\{ \dec \left( \dfrac{x}{\|x\|} \right) , \dec \left( \dfrac{y}{\|y\|}\right) \right\} &\text{when } x \neq 0, y \neq 0, \\
\dec(x)  &\text{when } y =0, \\
 \dec(y) &\text{when } x=0.
\end{cases} 
\]
\item[\textup{(b)}] For $(x,y) \in S_{X\oplus_\infty Y}$, 
\[
\dec ((x,y)) \geq \min \bigl\{ \dec(x), \dec(y)\bigr\}. 
\]
\end{enumerate} 
\end{cor}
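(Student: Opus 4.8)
The plan is to deduce both statements directly from Proposition \ref{prop:absolute-dec} by selecting, in each case, the coefficients $a,b\geq 0$ and the normalized vectors adapted to the particular norm $N$. The only genuine content is to verify that the $\ell_1$- and $\ell_\infty$-norms satisfy the hypothesis $N(a,b)=1$ for the right choice of $a,b$, and to dispatch the degenerate coordinates via the ``moreover'' clauses of that proposition.

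For part (a), the underlying norm is the $\ell_1$-norm $N(s,t)=|s|+|t|$, so a point $(x,y)\in S_{X\oplus_1 Y}$ satisfies $\|x\|+\|y\|=1$. When $x\neq 0$ and $y\neq 0$, I would set $a=\|x\|$, $b=\|y\|$, $x_0=x/\|x\|\in S_X$ and $y_0=y/\|y\|\in S_Y$. Then $N(a,b)=\|x\|+\|y\|=1$ and $(x,y)=(ax_0,by_0)$, so the first inequality of Proposition \ref{prop:absolute-dec} yields
\[
\dec((x,y)) \geq \min\{\dec(x_0),\dec(y_0)\} = \min\left\{\dec\left(\frac{x}{\|x\|}\right),\dec\left(\frac{y}{\|y\|}\right)\right\}.
\]
When $y=0$ (so that $\|x\|=1$ and $x\in S_X$), I would instead take $a=1$, $b=0$, which again meets $N(1,0)=1$, and invoke the case $b=0$ of Proposition \ref{prop:absolute-dec} to obtain $\dec((x,0))\geq\dec(x)$; the case $x=0$ is symmetric via the case $a=0$.

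For part (b), the norm is $N(s,t)=\max\{|s|,|t|\}$. Here the crucial simplification is that one need not normalize: choosing $a=b=1$ already gives $N(1,1)=1$, and since $x\in B_X$, $y\in B_Y$ with $(x,y)=(1\cdot x,1\cdot y)$, Proposition \ref{prop:absolute-dec} applies verbatim to produce $\dec((x,y))\geq\min\{\dec(x),\dec(y)\}$. This also accounts for the structural asymmetry between the two parts: the $\ell_1$ bound is forced to involve the normalizations $x/\|x\|$ and $y/\|y\|$ because the admissible coefficients must sum to one, whereas the $\ell_\infty$ bound can be phrased directly in terms of $\dec(x)$ and $\dec(y)$.

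There is no substantial obstacle: once Proposition \ref{prop:absolute-dec} is in hand, the corollary is a bookkeeping exercise in matching parameters. The only point demanding a little care is the treatment of the boundary cases $x=0$ or $y=0$ in part (a), where the decomposition $(x,y)=(ax_0,by_0)$ with both coordinates normalized breaks down and one must appeal to the degenerate versions (a) and (b) of the proposition instead.
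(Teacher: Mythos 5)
Your proposal is correct and follows exactly the paper's own argument: apply Proposition \ref{prop:absolute-dec} with $a=\|x\|$, $b=\|y\|$ for the $\ell_1$-case and with $a=b=1$ for the $\ell_\infty$-case, handling the degenerate coordinates via the ``moreover'' clauses. Nothing further is needed.
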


\begin{proof}
For the assertion (a), we apply Proposition \ref{prop:absolute-dec} with $a = \|x\|$ and $b = \|y\|$. For (b), use again Proposition \ref{prop:absolute-dec} with $a = 1$ and $b=1$.
\end{proof}

If one of the components is zero in case of $\ell_1$-sum, then the above inequality is indeed an equality.

\begin{prop}\label{prop:ell_1-converse}
Let $X$ and $Y$ be Banach spaces. Then, $\dec((x,0)) = \dec(x)$ for every $(x,0) \in S_{X \oplus_1 Y}$.
\end{prop}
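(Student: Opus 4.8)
The inequality $\dec((x,0)) \geq \dec(x)$ is already contained in Proposition \ref{prop:absolute-dec}(a) (equivalently Corollary \ref{cor:ell_1-sum}(a)), so the plan is to establish the reverse inequality $\dec(x) \geq \dec((x,0))$. I would set $\alpha := \dec((x,0))$ and aim to show $\dec(x) \geq \alpha$; by Proposition \ref{prop:dc-characterization}(b) this amounts to proving $x \in \overline{\co}\,\Delta_{2-\alpha+\eps'}^{X}(x)$ for every $\eps' > 0$, starting from the information that $(x,0) \in \overline{\co}\,\Delta_{2-\alpha+\eps}^{X\oplus_1 Y}((x,0))$ for every $\eps>0$.

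Unwinding the second membership, for a fixed $\eps>0$ I would write $(x,0)$ as an $\ell_1$-norm limit of convex combinations $\sum_i \lambda_i^k (u_i^k, v_i^k)$ with $\|u_i^k\| + \|v_i^k\| \leq 1$ and $\|x - u_i^k\| + \|v_i^k\| \geq \alpha - \eps$ for all $i$. Reading off the two coordinates gives $\sum_i \lambda_i^k u_i^k \to x$ in $X$ and $\sum_i \lambda_i^k v_i^k \to 0$ in $Y$. The crucial observation is that the total $Y$-mass vanishes: since $\|x\| = 1$,
$$
1 = \|x\| = \lim_k \Big\| \sum_i \lambda_i^k u_i^k \Big\| \leq \liminf_k \sum_i \lambda_i^k \|u_i^k\| \leq \liminf_k \Big(1 - \sum_i \lambda_i^k \|v_i^k\|\Big),
$$
which forces $\sum_i \lambda_i^k \|v_i^k\| \to 0$. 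This is the step I expect to be the heart of the matter: it is exactly the extremality $\|x\|=1$ (we are on the sphere of $X\oplus_1 Y$) that prevents the $Y$-direction from contributing spurious distance, and it is what turns the general inequality into an equality when one component is zero.

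Given the vanishing mass, I would finish by a good/bad splitting at level $\eps := \eps'/2$. Calling an index bad when $\|v_i^k\| > \eps'/2$, a Markov-type estimate shows the bad weight $\eta_k := \sum_{\mathrm{bad}} \lambda_i^k \leq \tfrac{2}{\eps'}\sum_i \lambda_i^k \|v_i^k\| \to 0$, while every good index satisfies $\|x - u_i^k\| \geq (\alpha - \eps) - \eps'/2 = \alpha - \eps'$, i.e. $u_i^k \in \Delta_{2-\alpha+\eps'}^X(x)$. Discarding the bad indices alters the combination by at most $\eta_k$ in norm, so after renormalizing the good weights to sum to one I obtain honest convex combinations of points of $\Delta_{2-\alpha+\eps'}^X(x)$ converging to $x$. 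Hence $x \in \overline{\co}\,\Delta_{2-\alpha+\eps'}^X(x)$ for every $\eps' > 0$, and Proposition \ref{prop:dc-characterization}(b) yields $\dec(x) \geq \alpha = \dec((x,0))$, completing the proof.

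An alternative route would be to work directly with slices: the functional $(z^*,0)$ turns any slice $S(z^*,\gamma)$ of $B_X$ containing $x$ into a slice of $B_{X\oplus_1 Y}$ containing $(x,0)$, and since $z^*(u) > 1-\gamma$ forces $\|v\| < \gamma$ one gets $\dec((x,0)) \leq \sup_{w \in S(z^*,\gamma)}\|x-w\| + \gamma$. The difficulty there is to drive the width $\gamma$ to $0$ along near-optimal slices, which is not transparent; the convex-hull argument above sidesteps this issue entirely, so that is the approach I would adopt.
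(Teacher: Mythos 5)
Your proof is correct, but it takes a genuinely different route from the paper's. The paper proves the reverse inequality $\dec((x,0)) \leq \dec(x)$ by the slice argument you sketched as your ``alternative route'' and then discarded: given a slice $S(x^*,\delta)$ of $B_X$ containing $x$ with $\sup_{y \in S(x^*,\delta)}\|y-x\| < \alpha$, it invokes \cite[Lemma 2.1]{IK} to produce $y_1^* \in S_{X^*}$ with $x \in S(y_1^*,\eta) \subseteq S(x^*,\delta)$ for arbitrarily small $\eta$; the slice $S((y_1^*,0),\eta)$ of $B_{X\oplus_1 Y}$ then contains $(x,0)$ and forces $\|v\| < \eta$ for every $(u,v)$ in it, so $\|(u,v)-(x,0)\| < \alpha + \eta$. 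In other words, the difficulty you identified --- driving the slice width to $0$ while staying inside a near-optimal slice containing $x$ --- is exactly what that lemma resolves, and it is the whole content of the paper's argument. Your convex-hull proof is a valid substitute: the membership $(x,0) \in \overline{\co}\,\Delta^{X\oplus_1 Y}_{2-\alpha+\eps}((x,0))$ from Proposition \ref{prop:dc-characterization}(b), the observation that $\|x\|=1$ forces $\sum_i \lambda_i^k \|v_i^k\| \to 0$, and the Markov-type good/bad splitting (with the renormalization by $1-\eta_k$, which is legitimate since $\eta_k \to 0$ makes the good part nonempty for large $k$) all check out, and the case $\alpha = 0$ is trivial so the restriction $\alpha > 0$ in Proposition \ref{prop:dc-characterization} causes no harm. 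What each approach buys: the paper's proof is shorter and stays entirely at the level of slices, at the cost of importing an external lemma; yours is self-contained modulo Proposition \ref{prop:dc-characterization} and makes visible where the hypothesis $\|(x,0)\|=1$ enters (it kills the $Y$-mass), which is a genuine expository advantage.
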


\begin{proof}
Thanks to Corollary \ref{cor:ell_1-sum}, it suffices to show for every $(x,0) \in S_{X \oplus_1 Y}$ that $\dec((x,0)) \leq \dec(x)$. Assume that $\dec(x) \leq \alpha$, then there exists a slice $S(x^*,\delta)$ with $x \in S(x^*,\delta)$ such that $\|y-x\| < \alpha$ for every $y \in S(x^*,\delta)$. Given $0<\eta<\delta$, by \cite[Lemma 2.1]{IK} we can find  $y_1^* \in S_{X^*}$ satisfying that
\[
x \in S(y_1^*,\eta) \subseteq S(x^*,\delta).
\]
Note that for $(u,v) \in S( (y_1^*, 0), \eta)$, we have $y_1^* (u) >  1-\eta$ and $\|v\| < \eta$. This implies that
\[
\|(u,v) - (x,0)\| = \|u-x\| + \|v\| < \|u-x\| + \eta,
\]
where $u, x \in S(x^*,\delta)$. As it is clear that $(x, 0) \in S( (y_1^*, 0) , \eta)$ and $\eta$ can be chosen arbitrarily small, we conclude that $\dec((x,0)) \leq \alpha$.
\end{proof}

\vspace{0.3em}

\noindent \textbf{Acknowledgment:} 
The authors are grateful to the anonymous referee for the careful reading of the manuscript and for providing a number of suggestions which have improved its final form. Mingu Jung was supported by a KIAS Individual Grant (MG086601) and by June E Huh Center for Mathematical Challenges (HP086601) at Korea Institute for Advanced Study.
\vspace{0.3em}

\noindent \textbf{Conflicts of interest:} There is no conflict of interest.
\vspace{0.3em}

\noindent \textbf{Data availability:} No data was used for the research described in the article.

\end{document}